\newtheorem{theorem}{Theorem}[section]
\newtheorem{lm}[theorem]{Lemma}
\newtheorem{pr}[theorem]{Proposition}
\begin{document}
	
\title{Linkage for periplectic supergroups in positive characteristic}
\author{}
\address{}
\email{}
\author{F.Marko and A.N.~Zubkov}
\address{The Pennsylvania State University, 76 University Drive, Hazleton, 18202 PA, USA}
\email{fxm13@psu.edu}
\address{Department of Mathematical Sciences, UAEU, Al-Ain, United Arab Emirates; \linebreak Sobolev Institute of Mathematics, Omsk Branch, Pevtzova 13, 644043 Omsk, Russia}
\email{a.zubkov@yahoo.com}
\begin{abstract}
We consider the periplectic supergroup ${\bf P} (n)$ over a ground field $\Bbbk$ of characteristic $p>2$. We show that there are four blocks of ${\bf P} (n)$ of simple supermodules $L^{\epsilon}(\lambda)$ corresponding to dominant weights
$\lambda$  of even and odd lengths, and the even and odd parity $\epsilon$ of their highest weight vector.
\end{abstract}

\maketitle

\section*{Introduction}
The representation theory of supermodules for the periplectic supergroup ${\bf P} (n)$ over the ground field of characteristic zero was investigated in \cite{chen,many}. The paper \cite{many} contains a detailed analysis of the combinatorics of the category $\mathcal{F}$ of finite-dimensional representations of ${\bf P} (n)$. 
While for basic and queer superalgebras $\mathfrak{g}$, the center of the universal enveloping algebra $\mathcal{U}(\mathfrak{g})$ is large,  the center of the universal enveloping algebra of the periplectic superalgebra $\mathfrak{p}(n)$ is trivial (see \cite{sch}). As a consequence, there are only a few blocks int the category of supermodules over $\mathfrak{g}$.
Indeed, Theorem 9.1.2 of \cite{many} shows that there are $2(n+1)$ blocks of $\mathfrak{p} (n)$. Each block has a representative simple supermodule $L^\epsilon (\lambda)$ determined by the dominant weight $\lambda=\omega_i=i(i-1)\ldots 1 0^{n-i}$ for $i=0, \ldots, n$ and the parity of the highest vector $v_{\lambda}$ of $L(\lambda)$. 

The main goal of the present paper is to investigate the linkage of supermodules for ${\bf P} (n)$ over a field of characteristic $p>2$ and to determine the corresponding blocks of category $\mathcal{F}$.
Changing from a ground field of characteristic zero to a ground field of characteristic $p>2$ brings more nontrivial extensions of simple supermodules and smaller number of blocks. Nevertheless, it is surprising that there are only four blocks for each ${\bf P}(n)$ over a ground field of characteristic $p>2$.

The structure of the paper is as follows. 

Section 1 introduces periplectic supergroups, their root system, and irreducible and induced supermodules.

In Section 2, we use the asymmetry of the root system of ${\bf P} (n)$ to determine a partial case of odd linkage for weights $\lambda$ such that the induced supermodule $H^0_{ev}(\lambda)$, over the maximal even sub-supergroup ${\bf P} (n)_{ev}\sim {\bf GL} (n)$ of ${\bf P} (n)$, is irreducible. 

In Section 3, we define the partial order $\prec$ on dominant weights that behaves well under even linkage and restricted odd linkage.
Then we use restricted odd linkage to show that each dominant weight is linked to one of the simple supermodules $L^{\epsilon}(\lambda)$, where $\lambda=\omega^{a}_{-i}=a^{n-i}(a-1)\ldots (a-i)$, $\epsilon=\pm $ is the parity of the highest weight vector, $0\leq i\leq \min\{n,\frac{p-1}{2}\}$, 
and $a$ is an integer.

In Section 4, we state the criteria for the irreducibility of the even induced module $H^0_{ev}(\lambda)$ and apply it to various special cases used in subsequent sections.

In Section 5, we prove that none of the weights $\omega^a_{-i}$ for $2 \leq i \leq \frac{p-1}{2}$ is minimal concerning 
the order $\prec$ if $p\geq n$. In order to show that $\omega^a_{-i}$ is not minimal with respect to $\prec$, we find weights $\mu$ and $\kappa$ that are even-linked and such that $\omega^a_{-i}$ is linked to $\mu$ via a chain of restricted odd-linked weights, and $\kappa$ is linked to a weight $\nu\precneqq \omega^a_{-i}$ via a chain of restricted odd-linked weights.   

Section 6 is the most technical. In there we prove that none of the weights $\omega^a_{-i}$ for $2 \leq i \leq \frac{p-1}{2}$ is minimal with respect to the order $\prec$ if $p\leq n$. 

That reduces our considerations to weights $\omega^a_{-1}$ and $\omega^a_0$. These weights are investigated in Section 7. Finally, in Theorem \ref{t7.3}, we determine the blocks of ${\bf P} (n)$.  
 
\section{Periplectic supergroup}
We are working over a ground field $\Bbbk$ of zero or odd characteristics.
Let $R$ be a supercommutative superalgebra, ${\bf M}(n|n)(R)$ be a set of block matrices 
$\left( \begin{array}{cc}
X & Y \\
Z & W
\end{array}\right),$ 
where $X,Y,Z,W$ are square matrices of size $n$ with entries in $R$, and 
${\bf GL}(n|n)$ consists of those elements in ${\bf M}(n|n)$ for which $X,W$ are invertible.

Let ${\bf P}(n)$ be a \emph{periplectic} supergroup, where $n\geq 2$. 
Then for any (supercommutative) superalgebra $R$, we have
\[{\bf P}(n)(R)=\{ g\in {\bf GL}(n|n)(R)| ^{st}g J_n g=J_n\}, \]
where
\[ J_n=\left( \begin{array}{cc}
0 & I_n \\
I_n & 0
\end{array}\right), \quad 
g= \left( \begin{array}{cc}
X & Y \\
Z & W
\end{array}\right), 
\text{ and }
^{st} g= \left( \begin{array}{cc}
X^t & Z^t \\
-Y^t & W^t
\end{array}\right).\]
Then $g\in {\bf GL}(n|n)(R)$ belongs to ${\bf P}(n)(R)$ if and only if the following equations are satisfied:
\[Z^t X +X^t Z=0, \ Z^t Y+ X^t W=I_n, \ W^t Y-Y^t W=0, \ W^t X -Y^t Z=I_n. \]
The largest even sub-supergroup ${\bf P}(n)_{ev}$ of ${\bf P}(n)$ consists of matrices
\[ \left( \begin{array}{cc}
X & 0 \\
0 & (X^t)^{-1}
\end{array}\right) \text{ for } X\in {\bf GL}(n).\]
Thus, ${\bf P}(n)_{ev}$  is naturally isomorphic to ${\bf GL}(n)$.

The maximal torus $T$ of ${\bf P}(n)$ consists of matrices
\[ \left( \begin{array}{cc}
X & 0 \\
0 & X^{-1}
\end{array}\right),\]
where $X$ is an invertible diagonal $n\times n$ matrix. We identify $T$ with the standard torus of ${\bf GL} (n)$.

The Lie superalgebra $\mathfrak{p}(n)$ of ${\bf P}(n)$ consists of matrices 
\[ A=\left( \begin{array}{cc}
X & Y \\
Z & W
\end{array}\right)\in {\bf M}(n|n)(\Bbbk)\]
such that $^{st}AJ_n + J_n A=0$. Equivalently, $A$ belongs to $\mathfrak{p}(n)$ if and only if
\[ W=-X^t, \ Z^t=-Z, \ Y^t=Y.  \]

Let $V$ denote the standard ${\bf GL}(n|n)$-supermodule. Fix a homogeneous basis $v_1, \ldots, v_n, v_{\bar 1}, \ldots, v_{\bar n}$ of $V$
such that the parities are given as $|v_i|=0$ and $|v_{\bar i}|=1$ for $1\leq i\leq n$. Let $I$ denote the set $\{1, \ldots, n, {\bar 1}, \ldots, {\bar n} \}$. There is an involution $I\to I$ that sends each $i$ to $\bar i$, and symmetrically, each $\bar i$ to $i$, for $1\leq i\leq n$. We denote this involution by $i\mapsto {\bar i}$ for $i\in I$.  

We define an odd nondegenerate bilinear form $(\  , \ )$ on $V$ by $(v_i, v_j)=\delta_{i, {\bar j}}$ for $i, j\in I$. For any superalgebra $R$, we extend this form to the right $R$-supermodule
$V\otimes R$ by the rule \[(v\otimes a, w\otimes b)_R=(-1)^{|a||w|}(v, w)ab \text{ for }  v, w\in V, a, b\in R, \] where $a, b$ and $v, w$ are initially assumed to be homogeneous, and then the general case follows by the linear extension.

Then ${\bf P}(n)(R)$ consists of all even automorphism of $R$-supermodule $V\otimes R$, preserving the form $( \ , \ )_R$. Additionally, a (homogeneous) operator $A\in {\bf M}(n|n)(\Bbbk)$ belongs to $\mathfrak{p}(n)$ if and only if 
\[ (-1)^{|A||w|}(Av, w)+(v, Aw)=0 \text{ for all } v, w\in V.\]

We fix a basis of ${\bf M}(n|n)(\Bbbk)$ consisting of the matrices $E_{i j}$ with $E_{ij} v_k=\delta_{kj} v_i$ for $ i, j, k\in I$.

The root system of ${\bf P}(n)$ with respect to $T$ is $\Delta=\Delta_0\sqcup\Delta_1$, where 
\[\Delta_0=\{\pm(\epsilon_i-\epsilon_j)| 1\leq i<j\leq n  \} \]
is the subset of \emph{even} roots and
\[\Delta_1=\{\pm(\epsilon_i+\epsilon_j), 2\epsilon_k| 1\leq i< j\leq n, 1\leq k\leq n  \} \]
is the subset of \emph{odd} roots.

The set \[ \Delta^+=\{\epsilon_i-\epsilon_j, -(\epsilon_i+\epsilon_j)| 1\leq i< j\leq n\} \]
is a set of \emph{positive} roots, 
and the complementary set
\[ \Delta^-=\{-(\epsilon_i-\epsilon_j), \epsilon_k+\epsilon_l| 1\leq i< j\leq n, 1\leq k\leq l\leq n\} \]
is the set of \emph{negative} roots.
Then $\Delta=\Delta^+\sqcup\Delta^-$ but this decomposition is asymmetric and $-\Delta^+\neq \Delta^-$.
We denote by $B^+$ and $B^-$ the Borel sub-supergroups of ${\bf P}(n)$, and by  $U^+$ and $U^-$ the unipotent sub-supergroups corresponding to $\Delta^+$ and 
$\Delta^-$, respectively.
Additionally, we denote $B^+_{ev}=B^+\cap {\bf P}(n)_{ev}$ and $B^-_{ev}=B^-\cap {\bf P}(n)_{ev}$.
Furthermore, define $\rho=(\frac{n}{2}, \frac{n-2}{2}, \ldots, \frac{2-n}{2}, \frac{-n}{2})$ as a half-sum of all positive roots in ${\bf P}(n)_{ev}\simeq{\bf GL}(n)$. 
For a root $\alpha$ of ${\bf GL}(n)$, denote by $\alpha^{\vee}$ its dual root, and denote by $\langle .,.\rangle$ the bilinear form such that $\langle \epsilon_i,\epsilon_j^{\vee}\rangle=\delta_{ij}$. 
The affine Weyl group $W$ of ${\bf GL}(n)$ is generated by reflections $s_{\beta, kp}$ given by
\[s_{\beta, kp}(\lambda)=\lambda-(\langle \lambda, \beta^{\vee}\rangle -kp)\beta\]
for $\beta\in \Delta_0\cap \Delta^+$ and integers $k$.
The dot action of the affine Weyl group $W$ on weights $\lambda$ is given as 
$w\bullet \lambda=w(\lambda+\rho)-\rho$. 
The affine reflection under the dot action is given as
\[s_{\beta,kp}\bullet \lambda=\lambda-(\langle \lambda+\rho,\beta^{\vee}\rangle-kp)\beta\]
or if $\beta=\epsilon_i-\epsilon_j$, then 
\[s_{\epsilon_i-\epsilon_j, kp}\bullet \lambda = \lambda -(\lambda_i-\lambda_j-i+j-kp)(\epsilon_i-\epsilon_j).\]

Let $P^+$ denote the parabolic sub-supergroup of ${\bf P}(n)$ consisting of all matrices
 \[ \left( \begin{array}{cc}
 X & 0 \\
 Z & (X^t)^{-1}
 \end{array}\right) \]
such that $X^t Z=-Z^t X$, and $P^-$ denote the parabolic sub-supergroup of ${\bf P}(n)$ consisting of all matrices
\[ \left( \begin{array}{cc}
X & Y \\
0 & (X^t)^{-1}
\end{array}\right) \]
such that $XY^t=YX^t$.  
The natural supergroup morphisms $P^+\to {\bf GL}(n)$ defined as 
\[  \left( \begin{array}{cc}
X & 0 \\
Z & (X^t)^{-1}
\end{array}\right)\mapsto X, \ \mbox{and} \ \left( \begin{array}{cc}
X & 0 \\
Z & (X^t)^{-1}
\end{array}\right)\mapsto (X^t)^{-1}, \]
and natural supergroup morphisms $P^-\to {\bf GL}(n)$ defined as
\[\left( \begin{array}{cc}
X & Y \\
0 & (X^t)^{-1}
\end{array}\right)\mapsto X, \ \mbox{and} \ \left( \begin{array}{cc}
X & Y \\
0 & (X^t)^{-1}
\end{array}\right)\mapsto (X^t)^{-1}   \]
 are split. Therefore, $P^+\simeq {\bf GL}(n)\ltimes U^+$ and $P^-\simeq {\bf GL}(n)\ltimes U^-$, where the \emph{purely-odd unipotent} sub-supergroups $U^+$ and $U^-$ of $P^+$ and $P^-$  consist of the matrices
\[\left( \begin{array}{cc}
I_n & 0 \\
Z & I_n
\end{array}\right) \text{ such that } Z=-Z^t,\]
and
\[ \left( \begin{array}{cc}
I_n & Y \\
0 & I_n
\end{array}\right) \text{ such that } Y=Y^t, \]
respectively. 
Also, $B^+$ is a sub-supergroup of $P^+$, and $B^-$ is a sub-supergroup of $P^-$. 
\begin{lm}\label{decomposition}
There is a commutative diagram whose arrows are superscheme isomorphisms.
\[\begin{array}{ccccc}
 & & {\bf P}(n) & & \\
& \swarrow  & & \searrow & \\
P^+\times U^- & & &  & U^+\times P^- \\
& \searrow & &  \swarrow & \\
 & & U^+\times {\bf GL}(n)\times U^-& &  
 \end{array}.\]
\end{lm}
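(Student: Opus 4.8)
The plan is to prove that the relevant multiplication maps of sub-supergroups are superscheme isomorphisms by writing down explicit inverses built from regular functions; once this is done the commutativity of the diagram is automatic, because associativity of multiplication in ${\bf P}(n)$ forces the two bottom-up composites $U^+\times{\bf GL}(n)\times U^-\to{\bf P}(n)$ to be the single map $(u^+,h,u^-)\mapsto u^+hu^-$. Throughout it suffices to work on the functor of points: a morphism of affine superschemes is an isomorphism as soon as it is a bijection on $R$-points for every supercommutative superalgebra $R$, functorially in $R$.

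\textbf{The two halves.} I would first prove that multiplication gives superscheme isomorphisms $P^+\times U^-\to{\bf P}(n)$ and $U^+\times P^-\to{\bf P}(n)$. Injectivity reduces to $P^+\cap U^-=\{1\}=U^+\cap P^-$, which is immediate from the matrix shapes: a matrix that is simultaneously of the form $\begin{pmatrix}X&0\\Z&(X^t)^{-1}\end{pmatrix}$ and of the form $\begin{pmatrix}I_n&Y\\0&I_n\end{pmatrix}$ must have $X=I_n$, $Z=0$, $Y=0$. For surjectivity I would write the factorization down by formula: for $g=\begin{pmatrix}X&Y\\Z&W\end{pmatrix}\in{\bf P}(n)(R)$, with $X$ invertible, set
\[
p^+=\begin{pmatrix}X&0\\Z&(X^t)^{-1}\end{pmatrix},\qquad u^-=\begin{pmatrix}I_n&X^{-1}Y\\0&I_n\end{pmatrix},
\]
and check that $p^+\in P^+(R)$, $u^-\in U^-(R)$ and $p^+u^-=g$. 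Membership $p^+\in P^+(R)$ is just the relation $X^tZ=-Z^tX$, one of the four defining equations of ${\bf P}(n)$; membership $u^-\in U^-(R)$ is the symmetry $(X^{-1}Y)^t=X^{-1}Y$, a rearrangement of the ``dual'' relation $XY^t=YX^t$; and $p^+u^-=g$ comes down to the $(2,2)$-block identity $ZX^{-1}Y+(X^t)^{-1}=W$, which follows from $ZX^{-1}=-(X^t)^{-1}Z^t$ (another rearrangement of $X^tZ+Z^tX=0$) together with $Z^tY+X^tW=I_n$. The relation $XY^t=YX^t$ is itself obtained by first deriving $gJ_n\,{}^{st}g=J_n$ from the defining equation $\,{}^{st}gJ_ng=J_n$ (left-multiply by $gJ_n$, use $J_n^2=I_n$, then cancel $g$) and reading off its $(1,1)$-block. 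Since the entries of $p^+$ and $u^-$ are polynomials in the entries of $g$ and in $\det(X)^{-1}$, the assignment $g\mapsto(p^+,u^-)$ is itself a morphism of superschemes and hence the required two-sided inverse; the case $U^+\times P^-\to{\bf P}(n)$ is symmetric, with $u^+=\begin{pmatrix}I_n&0\\ZX^{-1}&I_n\end{pmatrix}$ and $p^-=\begin{pmatrix}X&Y\\0&(X^t)^{-1}\end{pmatrix}$.

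\textbf{The lower arrows and commutativity.} The splittings of $P^\pm\to{\bf GL}(n)$ recorded above say exactly that multiplication gives superscheme isomorphisms $U^+\times{\bf GL}(n)\to P^+$ and ${\bf GL}(n)\times U^-\to P^-$; one sees this by the same kind of explicit factorization, e.g. $\begin{pmatrix}X&0\\Z&(X^t)^{-1}\end{pmatrix}=\begin{pmatrix}I_n&0\\ZX^{-1}&I_n\end{pmatrix}\begin{pmatrix}X&0\\0&(X^t)^{-1}\end{pmatrix}$, where $ZX^{-1}=-(ZX^{-1})^t$ is precisely the relation defining $P^+$. Forming the product of these isomorphisms with $\mathrm{id}_{U^-}$ and $\mathrm{id}_{U^+}$ respectively gives the two lower arrows of the diagram. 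For commutativity one observes that, read from the bottom up, both composites $U^+\times{\bf GL}(n)\times U^-\to{\bf P}(n)$ are literally $(u^+,h,u^-)\mapsto u^+hu^-$, so they coincide by associativity; equivalently, read from the top down, both composites send $g$ to the unique triple $(u^+,h,u^-)$ with $g=u^+hu^-$, the uniqueness coming from the injectivity of the previous step applied to $g=(u^+h)u^-$ and to $g=u^+(hu^-)$. I expect the only genuine labour to be the bookkeeping of the ``two halves'' step — verifying that the explicit $p^\pm,u^\pm$ lie in $P^\pm,U^\pm$ and that their products recover $g$ — which is exactly where one is forced to use both $\,{}^{st}gJ_ng=J_n$ and its consequence $gJ_n\,{}^{st}g=J_n$.
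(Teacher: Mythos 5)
Your proposal is correct and takes essentially the same route as the paper: the paper's proof simply writes down the explicit block-matrix factorizations $g\mapsto(p^+,u^-)$, $g\mapsto(u^+,p^-)$ and the corresponding bottom maps, exactly the formulas you give. You additionally carry out the routine verifications (membership in $P^\pm$, $U^\pm$, the $(2,2)$-block identity, uniqueness of the triple decomposition) that the paper leaves implicit, and these check out.
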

\begin{proof}
The top maps are defined as
 \[\left( \begin{array}{cc}
 X & Y \\
 Z & W
 \end{array}\right)\mapsto \left( \begin{array}{cc}
 X & 0 \\
 Z & (X^t)^{-1}
 \end{array}\right)\times \left( \begin{array}{cc}
 I_n & X^{-1}Y \\
 0 & I_n
 \end{array}\right)  \]
 and
 \[ \left( \begin{array}{cc}
 X & Y \\
 Z & W
 \end{array}\right)\mapsto \left( \begin{array}{cc}
 I_n & 0 \\
 ZX^{-1} & I_n
 \end{array}\right)\times \left( \begin{array}{cc}
 X & Y \\
 0 & (X^t)^{-1}
 \end{array}\right),\]
 respectively. The bottom maps are defined as
 \[ \left( \begin{array}{cc}
 X & 0 \\
 Z & (X^t)^{-1}
 \end{array}\right)\to \left( \begin{array}{cc}
 I_n & 0 \\
 ZX^{-1} & I_n
 \end{array}\right)\times \left( \begin{array}{cc}
 X & 0 \\
 0 & (X^t)^{-1}
 \end{array}\right)\]	
 and
 \[ \left( \begin{array}{cc}
 X & Y \\
 0 & (X^t)^{-1}
 \end{array}\right)\mapsto \left( \begin{array}{cc}
 X & 0 \\
 0 & (X^t)^{-1}
 \end{array}\right)\times \left( \begin{array}{cc}
 I_n & X^{-1}Y \\
 0 & I_n
 \end{array}\right), \]
 respectively.
\end{proof}

The following is the symmetric version of the above lemma. 
\begin{lm}\label{symmetric}
There is a commutative diagram whose arrows are superscheme isomorphisms.
\[\begin{array}{ccccc}
& & {\bf P}(n) & & \\
& \swarrow  & & \searrow & \\
P^-\times U^+ & & &  & U^-\times P^+ \\
& \searrow & &  \swarrow & \\
& & U^-\times {\bf GL}(n)\times U^+& &  
\end{array}.\]
\end{lm}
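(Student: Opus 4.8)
The plan is to repeat the proof of Lemma~\ref{decomposition} with the roles of $P^{+}$ and $P^{-}$, and of $U^{+}$ and $U^{-}$, systematically interchanged, organizing the triangular decompositions around the block $W$ of an element $g=\left(\begin{smallmatrix}X&Y\\Z&W\end{smallmatrix}\right)\in{\bf P}(n)$ instead of around $X$. This is legitimate because $W$ is invertible for every element of ${\bf GL}(n|n)$, so $W^{-1}$ and $(W^{t})^{-1}$ are regular functions on ${\bf P}(n)$ and all the formulas below define morphisms of superschemes. I would take the two top arrows ${\bf P}(n)\to P^{-}\times U^{+}$ and ${\bf P}(n)\to U^{-}\times P^{+}$ to be
\[
\left(\begin{array}{cc}X&Y\\Z&W\end{array}\right)\longmapsto
\left(\begin{array}{cc}(W^{t})^{-1}&Y\\0&W\end{array}\right)\times
\left(\begin{array}{cc}I_{n}&0\\W^{-1}Z&I_{n}\end{array}\right)
\]
and
\[
\left(\begin{array}{cc}X&Y\\Z&W\end{array}\right)\longmapsto
\left(\begin{array}{cc}I_{n}&YW^{-1}\\0&I_{n}\end{array}\right)\times
\left(\begin{array}{cc}(W^{t})^{-1}&0\\Z&W\end{array}\right),
\]
while the two bottom arrows come from the canonical splittings $P^{\pm}\simeq{\bf GL}(n)\ltimes U^{\pm}$ recorded before Lemma~\ref{decomposition}: they act as the identity on the $U^{\pm}$ factor and by $\left(\begin{smallmatrix}A&B\\0&(A^{t})^{-1}\end{smallmatrix}\right)\mapsto\left(\begin{smallmatrix}I_{n}&BA^{t}\\0&I_{n}\end{smallmatrix}\right)\times\left(\begin{smallmatrix}A&0\\0&(A^{t})^{-1}\end{smallmatrix}\right)$ and $\left(\begin{smallmatrix}A&0\\C&(A^{t})^{-1}\end{smallmatrix}\right)\mapsto\left(\begin{smallmatrix}A&0\\0&(A^{t})^{-1}\end{smallmatrix}\right)\times\left(\begin{smallmatrix}I_{n}&0\\A^{t}C&I_{n}\end{smallmatrix}\right)$ on the remaining one.

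The remaining work is routine matrix bookkeeping, parallel to the proof of Lemma~\ref{decomposition}: (a) the leading factor of each top arrow lies in the claimed sub-supergroup, its single defining relation being a rewriting of $W^{t}Y=Y^{t}W$; (b) the two displayed factors multiply back to $g$, the only non-immediate identity being $X=(W^{t})^{-1}+YW^{-1}Z$, which is $W^{t}X-Y^{t}Z=I_{n}$ rewritten by means of $W^{t}Y=Y^{t}W$; (c) since $g$ and that leading factor both lie in ${\bf P}(n)$, so does the complementary factor, and it has the displayed triangular shape, so applying $Z^{t}X+X^{t}Z=0$ to it gives the antisymmetry that places it in the claimed unipotent sub-supergroup; (d) each arrow is an isomorphism, its inverse being multiplication of the two factors, and the diagram commutes because both composites ${\bf P}(n)\to U^{-}\times{\bf GL}(n)\times U^{+}$ carry $g$ to the triple $\bigl(\left(\begin{smallmatrix}I_{n}&YW^{-1}\\0&I_{n}\end{smallmatrix}\right),\left(\begin{smallmatrix}(W^{t})^{-1}&0\\0&W\end{smallmatrix}\right),\left(\begin{smallmatrix}I_{n}&0\\W^{-1}Z&I_{n}\end{smallmatrix}\right)\bigr)$.

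I do not expect a genuine obstacle. Because the root system of ${\bf P}(n)$ is asymmetric, there is no automorphism of ${\bf P}(n)$ interchanging $P^{+}$ with $P^{-}$ that would let one simply quote Lemma~\ref{decomposition}, so the computation really must be redone; but since the two parabolic sub-supergroups $P^{+}$ and $P^{-}$ give rise to the two decompositions, all that is needed is the mirror-image bookkeeping above, the only step that is not purely mechanical being the identity $X=(W^{t})^{-1}+YW^{-1}Z$ in~(b).
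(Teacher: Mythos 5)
Your proposal is correct and is essentially the paper's own proof: the two top arrows and the two bottom arrows you write down are exactly the maps given in the paper, which likewise leaves the verification that the factors lie in $P^{\mp}$, $U^{\mp}$ and multiply back to $g$ (via $W^{t}X-Y^{t}Z=I_{n}$ and $W^{t}Y=Y^{t}W$) as routine. No further comment is needed.
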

\begin{proof}
The top maps are defined as
\[\left( \begin{array}{cc}
X & Y \\
Z & W
\end{array}\right)\mapsto \left( \begin{array}{cc}
(W^t)^{-1} & Y \\
0 & W
\end{array}\right)\times \left( \begin{array}{cc}
I_n & 0 \\
W^{-1}Z & I_n
\end{array}\right)\]
and
\[\left( \begin{array}{cc}
X & Y \\
Z & W
\end{array}\right)\mapsto \left( \begin{array}{cc}
I_n & YW^{-1} \\
0 & I_n
\end{array}\right)\times \left( \begin{array}{cc}
(W^t)^{-1} & 0 \\
Z & W
\end{array}\right)\]	
respectively. The bottom maps are defined as 
\[\left( \begin{array}{cc}
(W^t)^{-1} & Y \\
0 & W
\end{array}\right)\mapsto \left( \begin{array}{cc}
I_n & YW^{-1} \\
0 & I_n
\end{array}\right)\times \left( \begin{array}{cc}
(W^t)^{-1} & 0 \\
0 & W
\end{array}\right) \]
and 
\[ \left( \begin{array}{cc}
(W^t)^{-1} & 0 \\
Z & W
\end{array}\right)\mapsto \left( \begin{array}{cc}
(W^t)^{-1} & 0 \\
0 & W
\end{array}\right)\times \left( \begin{array}{cc}
I_n & 0 \\
W^{-1} Z & I_n
\end{array}\right) \]
respectively.
\end{proof}

We denote the \emph{standard coordinate functions} on ${\bf P}(n)$ by 
\[x_{ij}, y_{i {\bar j}}, z_{{\bar i} j}, w_{{\bar i} {\bar j}} \text{ for } 1\leq i, j\leq n.  \]
Set
\[{\bf X}=(x_{ij})_{1\leq i, j\leq n}, \ {\bf Y}=(y_{i {\bar j}})_{1\leq i, j\leq n}, \ {\bf Z}=(z_{{\bar i} j})_{1\leq i, j\leq n}, \ {\bf W}=(w_{{\bar i} {\bar j}})_{ 1\leq i, j\leq n}.\]
Denote ${\bf Y}'={\bf X}^{-1}{\bf Y}$, ${\bf Z}'={\bf Z}{\bf X}^{-1}$, and 
${\bf Y}''={\bf Y}{\bf W}^{-1}$, ${\bf Z}''={\bf W}^{-1}{\bf Z}$. 

The weights $\lambda$ on ${\bf P}(n)$ are the weights of ${\bf P}(n)_{ev}\simeq {\bf GL}(n)$.
We are using two orders on the set of weights of ${\bf P} (n)$, the \emph{dominant} and \emph{anti-dominant} ones. The dominant order is defined as
\[\lambda\geq\mu \ \mbox{if and only if} \ \lambda-\mu\in\sum_{\alpha\in\Delta^+}\mathbb{Z}_{\geq 0}\alpha,\]
and the anti-dominant order is defined symmetrically, that is
\[\lambda\geq^a\mu \ \mbox{if and only if} \ \lambda-\mu\in\sum_{\alpha\in\Delta^-}\mathbb{Z}_{\geq 0}\alpha . \]

According to \cite[Proposition 4.11 and Theorem 5.5]{tshib}, each simple ${\bf P} (n)$-supermodule is uniquely defined by its highest weight $\lambda$ with respect to $\geq$ and the parity $\epsilon$ of its highest weight vector, and we denote this supermodule by $L^{\epsilon}_{B^-}(\lambda)$. We will write $L(\lambda)$ for $L^+_{B^-}(\lambda)$. The weight $\lambda$ is \emph{dominant}, i.e. $\lambda_1\geq\lambda_2\geq\ldots\geq\lambda_n$, and the weight space $L(\lambda)_{\lambda}=L(\lambda)^{U^+}$ is a one-dimensional superspace. Denote by $\Pi$ the parity shift functor.

Let $H^0_{B^{\pm}}(\lambda)=\mathrm{ind}^{{\bf P} (n)}_{B^{\pm}} \Bbbk_{\lambda}$ denote the induced supermodules, and 
$H^0_{B^{\pm}_{ev}}(\lambda)=\mathrm{ind}^{{\bf GL}(n)}_{B^{\pm}_{ev}} \Bbbk_{\lambda}$ the induced supermodules. 
Further, denote by $V_{B_{ev}^{\mp}}(-\lambda)\simeq H^0_{B^{\pm}_{ev}}(\lambda)^*$ the corresponding Weyl supermodules. 
Each irreducible supermodule (up to parity shift) is isomorphic to the socle of the induced supermodule $H^0_{B^-}(\lambda)$ for some dominant weight $\lambda$. Conversely, if $H^0_{B^-}(\lambda)\neq 0$, then $\lambda$ is dominant and the socle of $H^0_{B^-}(\lambda)$ is $L(\lambda)$. 

Symmetrically, every simple ${\bf P} (n)$-supermodule is uniquely defined by the highest weight $\mu$ with respect to $\leq^a$ and the parity $\epsilon$ of its highest weight vector, and we denote it by $L^{\epsilon}_{B^+}(\mu)$. The weight $\mu$ is \emph{anti-dominant}, i.e. $-\mu$ is dominant. As above, $L_{B^+}(\mu)_{\mu}=L_{B^+}(\mu)^{U^-}$ is one-dimensional. Moreover, $L_{B^+}(\mu)$ is isomorphic to the socle of $H^0_{B^+}(\lambda)$.

The degree of a weight $\mu$ is $|\mu|=\sum_{i=1}^n \mu_i$.
For a supermodule $M$, we have a degree decomposition $M=\oplus M_k$, where the degree-$k$-component $M_k$ of $M$ is the direct sum of weightspaces $M_\mu$ for all weights $\mu$ such that $|\mu|=k$. 
Each simple supermodule $L(\lambda)$ has the smallest nonzero degree component $l=|\lambda|$.  
Its unique largest nonzero degree component is of degree $l+2r$, where $r\leq\frac{n(n-1)}{2}$.
Then  $L(\lambda)\simeq \Pi^r L_{B^+}(\lambda^+)$ for an anti-dominant weight $\lambda^+$, uniquely defined by $\lambda$.

Let $L_{ev}(\lambda)$ be the irreducible ${\bf P} (n)_{ev}\simeq {\bf GL}(n)$-module of the highest weight $\lambda$. 

If $H$ is a sub-supergroup of ${\bf P} (n)$, then we denote by $Dist(H)$ the distribution algebra of $H$.

\begin{pr}\label{l and l+2r components, minimal weight}
We have 
\begin{enumerate}		
\item $L(\lambda)_{l+2r}\simeq \Pi^r L_{B^+_{ev}}(\lambda^+)$;
\item $L(\lambda)^*\simeq \Pi^r L(-\lambda^+)$;
\item $L(\lambda)_l\simeq L_{ev}(\lambda)$;
\end{enumerate}
\end{pr}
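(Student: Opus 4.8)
The plan is to read off all three assertions from the two triangular decompositions of ${\bf P}(n)$ in Lemmas~\ref{decomposition} and \ref{symmetric}, combined with the degree grading. The elementary inputs I will use are: $\mathbf{GL}(n)={\bf P}(n)_{ev}$ preserves the degree on every ${\bf P}(n)$-supermodule; since $U^-$ is purely odd with roots $\epsilon_i+\epsilon_j$ of degree $+2$, the augmentation ideal $Dist(U^-)_+$ strictly raises the degree, and dually $Dist(U^+)_+$ strictly lowers it; and the superscheme isomorphisms of Lemmas~\ref{decomposition} and \ref{symmetric} yield the factorizations $Dist({\bf P}(n))=Dist(U^-)\,Dist(P^+)=Dist(U^+)\,Dist(P^-)$, together with $Dist(P^{\pm})=Dist(\mathbf{GL}(n))\,Dist(U^{\pm})$ coming from $P^{\pm}\simeq\mathbf{GL}(n)\ltimes U^{\pm}$.

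For (3) I would first note that the highest weight vector $v_{\lambda}$ sits in degree $l=|\lambda|$ and is annihilated by $Dist(U^+)_+$: each odd root vector in $\operatorname{Lie}(U^+)$ has root $-\epsilon_i-\epsilon_j$, and $\lambda-\epsilon_i-\epsilon_j$ is not $\le\lambda$ since $\epsilon_i+\epsilon_j$ is not a nonnegative combination of $\Delta^+$. Hence $Dist(P^+)v_{\lambda}=Dist(\mathbf{GL}(n))v_{\lambda}\subseteq L(\lambda)_l$, and applying $Dist(U^-)$ and invoking simplicity gives $L(\lambda)=Dist(U^-)\,Dist(\mathbf{GL}(n))v_{\lambda}$; taking degree-$l$ parts yields $L(\lambda)_l=Dist(\mathbf{GL}(n))v_{\lambda}$. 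I would then show this $\mathbf{GL}(n)$-module is irreducible: any nonzero $\mathbf{GL}(n)$-submodule $W\subseteq L(\lambda)_l$ is killed by $Dist(U^+)_+$ (which would drop the degree below $l$, where $L(\lambda)=0$), hence is $U^+$-stable, hence $P^+$-stable, so $Dist(U^-)W=Dist({\bf P}(n))W=L(\lambda)$ and $W=(Dist(U^-)W)_l=L(\lambda)_l$. Finally, in degree $l$ the odd roots cannot contribute, so every weight of $L(\lambda)_l$ is $\le\lambda$ in the $\mathbf{GL}(n)$-order and $\lambda$ occurs; therefore $L(\lambda)_l\simeq L_{ev}(\lambda)$.

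For (1) I would run the same argument with $U^+$ and $U^-$ interchanged and $P^+$ and $P^-$ interchanged (using Lemma~\ref{decomposition} in place of Lemma~\ref{symmetric}), applied to the $\le^a$-highest weight vector $v_{\lambda^+}$ of $L(\lambda)\simeq\Pi^r L_{B^+}(\lambda^+)$, to conclude that the top component $L(\lambda)_{l+2r}$ is an irreducible $\mathbf{GL}(n)$-submodule. To identify it I use that $\lambda^+$ is the $\le^a$-highest weight, so all weights of $L_{B^+}(\lambda^+)$ have degree $\le|\lambda^+|=l+2r$; thus $\lambda^+$ occurs in the top degree, and there every weight is $\ge\lambda^+$ in the $\mathbf{GL}(n)$-order, giving $L(\lambda)_{l+2r}\simeq\Pi^r L_{B^+_{ev}}(\lambda^+)$ (the parity shift reflecting the $r$ odd degree-raising steps from $L(\lambda)_l$ to the top). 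For (2): $L(\lambda)^*$ is again simple, and duality carries the degree-$k$ component to the dual of the degree-$(-k)$ component, so the minimal-degree component of $L(\lambda)^*$ is $\bigl(L(\lambda)_{l+2r}\bigr)^*\simeq\Pi^r L_{B^+_{ev}}(\lambda^+)^*\simeq\Pi^r L_{ev}(-\lambda^+)$, the last step because the dual of the $\mathbf{GL}(n)$-module of lowest weight $\lambda^+$ has highest weight $-\lambda^+$, which is dominant. Writing $L(\lambda)^*\simeq\Pi^{\delta}L(\mu)$ with $\mu$ dominant and applying (3) to it, its minimal-degree component equals $\Pi^{\delta}L_{ev}(\mu)$; comparing, and using that the $L_{ev}(\cdot)$ are pairwise non-isomorphic and not isomorphic to their parity shifts, I get $\mu=-\lambda^+$ and $\delta\equiv r$, i.e.\ $L(\lambda)^*\simeq\Pi^r L(-\lambda^+)$.

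I expect the one genuinely delicate step to be the identification in (1): one must pin down which of the two projections $P^-\to\mathbf{GL}(n)$ governs the action on $L(\lambda)_{l+2r}$ and verify that with that normalization $\lambda^+$ is the extreme ($=$ lowest) weight there, so that the top component really is $L_{B^+_{ev}}(\lambda^+)$ and not a twist of it — this is precisely where the asymmetry $-\Delta^+\ne\Delta^-$ and the appearance of an anti-dominant weight make themselves felt. Everything else reduces, via Lemmas~\ref{decomposition}--\ref{symmetric} and the degree grading, to routine highest-weight bookkeeping, and (2) then follows essentially formally from (1) and (3).
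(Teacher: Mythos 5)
Your argument is correct and rests on the same mechanism as the paper's proof: the factorizations $Dist({\bf P}(n))=Dist(U^{\mp})\,Dist(P^{\pm})$ furnished by Lemmas~\ref{decomposition} and~\ref{symmetric}, the observation that the purely odd $Dist(U^-)_+$ (resp.\ $Dist(U^+)_+$) strictly raises (resp.\ lowers) the degree, and the extremality of the components $L(\lambda)_l$ and $L(\lambda)_{l+2r}$. Two organizational differences are worth recording. First, you prove (3) directly, by exhibiting $L(\lambda)_l$ as the cyclic highest-weight module $Dist({\bf GL}(n))v_{\lambda}$ and showing that any nonzero ${\bf GL}(n)$-submodule of it is killed by $Dist(U^+)_+$, hence $P^+$-stable, hence generates all of $L(\lambda)$ and so equals $L(\lambda)_l$; the paper instead deduces (3) from (2) together with the bijectivity of $\lambda\mapsto-\lambda^+$ on dominant weights, using only the weaker input that $L_{ev}(\mu)$ occurs as a composition factor of $L(\mu)_{|\mu|}$. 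Your version has the small advantage of making (3) logically independent of (1) and (2). Second, for the irreducibility of the top layer in (1) the paper argues via uniqueness of the $B^-_{ev}$-primitive vector (any such vector is automatically $B^-$-primitive because $Dist(U^-)$ acts trivially in the top degree, hence is proportional to $v_{\lambda^+}$), whereas you reuse the parabolic-stability trick; both are sound and of comparable length. The ``delicate point'' you flag in (1) is resolved exactly as in the paper: the extreme vector of $L(\lambda)_{l+2r}$ is the $\leq^a$-highest weight vector of $L(\lambda)$, which pins down the weight as $\lambda^+$ and the parity as $(-1)^r$, and your derivation of (2) from (1) and (3) matches the paper's up to which auxiliary fact about the minimal-degree component is invoked.
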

\begin{proof}
Let $v\in L(\lambda)_{l+2r}$ be a homogeneous $B^-_{ev}$-primitive element of a weight $\nu$. Since $Dist(B^-)=Dist(U^-)Dist(B^-_{ev})$ and
$Dist(U^-)$ acts on $v$ trivially due the maximality of $r$, we infer that $v$ is $B^-$-primitive. Therefore,  $v$ is the highest weight vector of
$L(\lambda)$ with respect to the anti-dominant order, which implies $\nu=\lambda^+$. Moreover, $v$ is unique (up to a nonzero scalar). Hence it generates the unique simple ${\bf P} (n)_{ev}$-sub-supermodule of $L(\lambda)_{l+2r}$, that is isomorphic to $L_{B^+_{ev}}(\lambda^+)$. Besides, 
the parity of $L(\lambda)_{l+2r}$ coincides with the parity of $v$, which is $(-1)^r$. Finally, an element from $L(\lambda)=Dist({\bf P} (n))v=Dist(B^+)v$ belongs to $L(\lambda)_{l+2r}$ if and only if it belongs to $Dist(B^+_{ev})v\subseteq Dist({\bf P} (n)_{ev})v$. Indeed, $Dist(B^+)=Dist(U^+)Dist(B^+_{ev})$ and any
non-scalar element of $Dist(U^+)$ maps any element from $L(\lambda)_{l+2r}$ to a component of degree $l+2k$ with $k< r$. This proves (1).

For any dominant weight $\mu$, the degree component $L(\mu)_{|\mu|}$ has $L_{ev}(\mu)=L_{B^-_{ev}}(\mu)$ as a composition factor. Since
\[(L(\lambda)_{l+2r})^*=L(\lambda)^*_{-l-2r}\simeq \Pi^r L_{B^+_{ev}}(\lambda^+)^*\simeq \Pi^r L_{B^+_{ev}}(-w_0(\lambda^+))=\Pi^r L_{B^-_{ev}}(-\lambda^+)\]
and $L(\lambda)^*_{-l-2r}$ is the component of $L(\lambda)^*$ of minimal degree, the statement (2) follows.	

Since the correspondence $\lambda\mapsto-\lambda^+$ on the set of dominant weights is one-to-one, we obtain the statement (3).
\end{proof}

\begin{lm}\label{inflation for P^-}
The ${\bf GL} (n)$-module $H^0_{ev}(\lambda)$ has the natural structure of a $P^-$-supermo-dule via the epimorphism 
$P^-\to {\bf GL}(n)$ given via 
\[\left( \begin{array}{cc}
(W^t)^{-1} & Y \\
0 & W
\end{array}\right)\mapsto W . \] Moreover, as such a $P^-$-supermodule it is isomorphic to $\mathrm{ind}^{P^-}_{B^-} \Bbbk_{\lambda}$.
\end{lm}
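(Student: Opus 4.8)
The plan is to identify the displayed map with one of the two split epimorphisms $P^-\to{\bf GL}(n)$ recorded just before Lemma~\ref{decomposition}, and then to deduce the isomorphism by transitivity of induction, using that inducing an \emph{inflated} module along a subgroup which contains the kernel of the epimorphism again yields an inflated module. First: writing a general element of $P^-$ as $\left(\begin{smallmatrix} X & Y \\ 0 & (X^t)^{-1}\end{smallmatrix}\right)$ and setting $W=(X^t)^{-1}$, the map $\pi\colon\left(\begin{smallmatrix}(W^t)^{-1} & Y \\ 0 & W\end{smallmatrix}\right)\mapsto W$ is exactly the epimorphism $\left(\begin{smallmatrix}X & Y \\ 0 & (X^t)^{-1}\end{smallmatrix}\right)\mapsto (X^t)^{-1}$; it is a morphism of supergroups (the lower-right block multiplies directly), it is split by $W\mapsto\left(\begin{smallmatrix}(W^t)^{-1} & 0 \\ 0 & W\end{smallmatrix}\right)$, and its kernel is the purely odd unipotent radical $U^-$. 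Consequently every ${\bf GL}(n)$-module pulls back along $\pi$ to a $P^-$-supermodule on which $U^-$ acts trivially; applying this to $H^0_{ev}(\lambda)$ gives the first assertion.

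For the isomorphism I would first record the following elementary fact about induction for affine supergroup schemes, proved by descending sections exactly as in the classical case: if $N\trianglelefteq G$ is a closed normal sub-supergroup contained in a closed sub-supergroup $H\le G$, $\pi\colon G\to G/N$ is the quotient, and $\bar H=H/N$, then for every $\bar H$-supermodule $\bar M$ the subgroup $N$ acts trivially on $\mathrm{ind}^G_H(\pi^{\#}\bar M)$ — a section $f$ satisfies $f(ng)=n\cdot f(g)=f(g)$, hence also $f(gn)=f\bigl((gng^{-1})g\bigr)=f(g)$ by normality — and the resulting $G/N$-supermodule is canonically isomorphic to $\mathrm{ind}^{G/N}_{\bar H}\bar M$. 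I apply this with $G=P^-$, $N=U^-$, $H=B^-$: the radical $U^-$ is normal in $P^-$ and contained in $B^-$ (it is the product of all odd negative root subgroups), while $P^-/U^-\cong{\bf GL}(n)$ via $\pi$ and $B^-/U^-\cong B^-_{ev}$. Since $U^-$ is purely odd unipotent it acts trivially on the one-dimensional $B^-$-supermodule $\Bbbk_\lambda$ (the only grouplike in the exterior Hopf superalgebra $\Bbbk[U^-]$ is $1$), so $\Bbbk_\lambda$ is the $\pi$-inflation of the $B^-_{ev}$-character $\lambda$. Combining, $\mathrm{ind}^{P^-}_{B^-}\Bbbk_\lambda\cong\pi^{\#}\bigl(\mathrm{ind}^{{\bf GL}(n)}_{B^-_{ev}}\Bbbk_\lambda\bigr)=\pi^{\#}H^0_{ev}(\lambda)$, which is the claim.

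The one genuinely delicate point is the bookkeeping of conventions: one has to check that, under the identifications carried by $\pi$, the pair $\bigl(\pi(B^-),\lambda\bigr)$ matches the Borel and weight used to define $H^0_{ev}(\lambda)$ — rather than a $w_0$-twisted or sign-twisted variant — since this is where the two projections $P^-\to{\bf GL}(n)$ and the identification ${\bf P}(n)_{ev}\simeq{\bf GL}(n)$ interact with the asymmetry of the root system; the choice of the projection $\mapsto W$ (instead of $\mapsto X$) in the statement is dictated precisely by this compatibility. All the remaining ingredients — normality of $U^-$ in $P^-$, the decomposition $B^-\simeq B^-_{ev}\ltimes U^-$, triviality of the $U^-$-action, and the descent of sections along $P^-\to P^-/U^-$ — are routine and parallel the non-super case.
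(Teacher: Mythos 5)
Your argument is correct and follows the same route as the paper: the paper's proof simply invokes the decompositions $P^-\simeq {\bf GL}(n)\ltimes U^-$ and $B^-\simeq B^-_{ev}\ltimes U^-$ and cites Zubkov's general result on induction and inflation along the normal purely odd unipotent radical, which is exactly the fact you prove by hand via descent of sections. Your explicit treatment of the split projection onto the $W$-block and the triviality of the $U^-$-action on $\Bbbk_\lambda$ matches the intended reading, including the bookkeeping caveat the paper itself records in the note following Lemma~\ref{main isomorphism}.
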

\begin{proof}
Since $P^-\simeq {\bf GL}(n)\ltimes U^-$ and $B^-\simeq B^-_{ev}\ltimes U^-$, we can apply \cite[Theorem 10.1 and Corollary 10.2]{zub1}.	
\end{proof}
Slightly abusing notations, we denote the coordinate functions on $P^-$ by the same symbols $y_{i {\bar j}}$ and $w_{{\bar i} {\bar j}}$ as the coordinate functions on ${\bf P}(n)$.
Then $\Bbbk [w_{{\bar i} {\bar j}}\mid 1\leq i, j\leq n]_{\det({\bf W})}$ is a Hopf sub-superalgebra of $\Bbbk [P^-]$, which is naturally isomorphic to $\Bbbk [{\bf GL}(n)]$. Moreover, the epimorphism of Lemma \ref{inflation for P^-} 
is dual to the embedding $\Bbbk [w_{{\bar i} {\bar j}}\mid 1\leq i, j\leq n]_{\det({\bf W})}\to \Bbbk [P^-]$. In particular, since $H^0_{ev}(\lambda)$ can be naturally embedded into $\Bbbk [{\bf GL}(n)]$ as a right $\Bbbk [{\bf GL}(n)]$-module, it is also naturally embedded into $\Bbbk [P^-]$ as a right $\Bbbk [P^-]$-supercomodule, that is isomorphic to $\mathrm{ind}^{P^-}_{B^-} \Bbbk_{\lambda}$. In what follows we identify $\mathrm{ind}^{P^-}_{B^-} \Bbbk_{\lambda}$ with this sub-supercomodule of $\Bbbk [P^-]$. 

\begin{lm}\label{main isomorphism}
There is a superspace isomorphism
\[H^0_{B_{ev}^-}(\lambda)\otimes \Bbbk [U^+]\to H^0_{B^-}(\lambda)\subseteq \Bbbk [{\bf P}(n)],\]
induced by the map which is the identity on $\bf W$ and sends $\bf Z$ to ${\bf Z}''={\bf W}^{-1}{\bf Z}$.	
\end{lm}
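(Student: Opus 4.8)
The statement to prove (Lemma \ref{main isomorphism}) asserts that restriction of functions along an explicit change of coordinates gives a superspace isomorphism between $H^0_{B^-_{ev}}(\lambda)\otimes\Bbbk[U^+]$ and $H^0_{B^-}(\lambda)$ sitting inside $\Bbbk[{\bf P}(n)]$. The natural approach is to use the superscheme decomposition of Lemma \ref{symmetric}, in the form ${\bf P}(n)\simeq U^-\times{\bf GL}(n)\times U^+$ as superschemes (via the bottom of that diagram), and then to transport the induced-module description through this product decomposition. Concretely, I would first recall from Lemma \ref{inflation for P^-} and the discussion following it that $H^0_{ev}(\lambda)=\mathrm{ind}^{P^-}_{B^-}\Bbbk_\lambda$ sits inside $\Bbbk[P^-]$ as the sub-supercomodule cut out by the $B^-$-invariance condition in the ``${\bf W},{\bf Z}''$'' variables (equivalently, it is a copy of $H^0_{B^-_{ev}}(\lambda)\subseteq\Bbbk[{\bf GL}(n)]$ written in the variables ${\bf W}$, together with polynomial dependence on ${\bf Z}''={\bf W}^{-1}{\bf Z}$, which are coordinates on the purely-odd $U^+$).

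**Key steps.** (i) Using the isomorphism ${\bf P}(n)\xrightarrow{\sim} U^-\times P^-$ of Lemma \ref{symmetric}, identify $\Bbbk[{\bf P}(n)]\simeq\Bbbk[U^-]\otimes\Bbbk[P^-]$ and note that ${\bf Y}''={\bf Y}{\bf W}^{-1}$ are coordinates on the factor $U^-$ while ${\bf W}$ and ${\bf Z}''={\bf W}^{-1}{\bf Z}$ generate $\Bbbk[P^-]$. (ii) Apply the tensor identity / Frobenius-type description of induction along the inclusion $B^-\hookrightarrow P^-\hookrightarrow{\bf P}(n)$: since $B^-\simeq B^-_{ev}\ltimes U^-$ and the ``odd'' unipotent $U^-$ is a normal purely-odd sub-supergroup, $\mathrm{ind}^{{\bf P}(n)}_{B^-}\Bbbk_\lambda$ is obtained from $\mathrm{ind}^{P^-}_{B^-}\Bbbk_\lambda=H^0_{ev}(\lambda)$ by further inducing along $P^-\hookrightarrow{\bf P}(n)$, and because ${\bf P}(n)/P^-\simeq U^+$ is an affine superscheme with trivial cohomology, this higher induction is, as a superspace, just tensoring with $\Bbbk[U^+]$ — this is exactly \cite[Theorem 10.1, Corollary 10.2]{zub1} applied to the situation of Lemma \ref{symmetric}. (iii) Finally, read off that the composite superspace isomorphism is precisely the map which is the identity on the ${\bf W}$-block (so it carries $H^0_{B^-_{ev}}(\lambda)$ to its avatar in the ${\bf W}$-variables) and which sends the $U^+$-coordinates ${\bf Z}$ to ${\bf Z}''={\bf W}^{-1}{\bf Z}$, matching the formula in the statement; the image is the subspace of $\Bbbk[{\bf P}(n)]$ cut out by $B^-$-invariance, i.e.\ $H^0_{B^-}(\lambda)$.

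**Main obstacle.** The routine part is the bookkeeping of coordinate changes (${\bf Y}',{\bf Z}',{\bf Y}'',{\bf Z}''$) and verifying that the relevant sub-supercomodules match under the product decomposition of $\Bbbk[{\bf P}(n)]$. The genuinely delicate point is that induction along $P^-\hookrightarrow{\bf P}(n)$ need not a priori be exact in the super setting, so to conclude that $\mathrm{ind}^{{\bf P}(n)}_{B^-}\Bbbk_\lambda$ is literally $H^0_{ev}(\lambda)\otimes\Bbbk[U^+]$ as a \emph{superspace} one must invoke the decomposition ${\bf P}(n)\simeq U^+\times P^-$ as superschemes (not merely as a quotient), which is supplied by Lemma \ref{symmetric}, together with the fact that $\mathrm{ind}$ from a sub-supergroup with an affine-space complement is computed by restriction of functions. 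I expect the cleanest write-up to cite \cite[Theorem 10.1 and Corollary 10.2]{zub1} for this last fact, exactly as in the proof of Lemma \ref{inflation for P^-}, and then simply to check that the explicit iso produced there is the one described in the statement.
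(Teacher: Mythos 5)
Your proposal follows essentially the same route as the paper: factor the induction as $H^0_{B^-}(\lambda)\simeq \mathrm{ind}^{{\bf P}(n)}_{P^-}\mathrm{ind}^{P^-}_{B^-}\Bbbk_\lambda$, compute the inner induction via Lemma \ref{inflation for P^-} (i.e.\ \cite[Theorem 10.1 and Corollary 10.2]{zub1}), and compute the outer one as tensoring with $\Bbbk[U^+]$ using the superscheme decomposition ${\bf P}(n)\simeq P^-\times U^+$ of Lemma \ref{symmetric} --- the paper cites \cite[Lemma 6.1]{marzub} for precisely this last step. One bookkeeping caveat: in your step (i) the relevant decomposition is ${\bf P}(n)\simeq P^-\times U^+$ (not $U^-\times P^-$), with $\Bbbk[P^-]$ generated by ${\bf W}$ and ${\bf Y}''$ and with ${\bf Z}''$ furnishing the coordinates on the complementary factor $U^+$; your step (ii) and closing paragraph have this right, so the slip does not affect the argument.
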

\begin{proof}
Combine the canonical isomorphism $H^0_{B^-}(\lambda)\simeq \mathrm{ind}^{{\bf P}(n)}_{P^-}\mathrm{ind}^{P^-}_{B^-} \Bbbk_{\lambda}$
with \cite[Lemma 6.1]{marzub}.
\end{proof}
Note that, in the above Lemma %\ref{main isomorphism}, 
the ${\bf GL} (n)$-module $H^0_{ev}(\lambda)$ is identified with $ind^{{\bf GL}(n)}_{B^+_{ev}} \Bbbk_{-\lambda}$.
In fact, we identify ${\bf P}(n)_{ev}\simeq {\bf GL}(n)$ with the block $W$ of $P^-$. Therefore, $B^-_{ev}$ is identified with the Borel subgroup of ${\bf GL} (n)$ consisting of all UPPER triangular matrices. Moreover, the one-dimensional $B^-$-supermodule $\Bbbk_{\lambda}$ has the weight $\lambda$ concerning the action of the $X$-block of the maximal torus $T$. The $W$-block of $T$ acts via the weight $-\lambda$.

\begin{lm}\label{main2}
There is a superspace isomorphism
\[H^0_{B_{ev}^+}(\lambda)\otimes \Bbbk [U^-]\to H^0_{B^+}(\lambda)\subseteq \Bbbk [{\bf P}(n)],\]
induced by a map which is the identity on $\bf X$ and maps $\bf Y$ to ${\bf Y}'={\bf X}^{-1}{\bf Y}$.	
\end{lm}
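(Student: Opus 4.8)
The plan is to follow the proof of Lemma~\ref{main isomorphism} with all arrows reversed: replace the parabolic $P^-$ by $P^+$, the purely-odd unipotent sub-supergroup $U^+$ by $U^-$, and the $P^-\times U^+$-arm of Lemma~\ref{symmetric} by the $P^+\times U^-$-arm of Lemma~\ref{decomposition}.

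First, one records the $P^+$-analogue of Lemma~\ref{inflation for P^-}: the ${\bf GL}(n)$-module $H^0_{B^+_{ev}}(\lambda)=\mathrm{ind}^{{\bf GL}(n)}_{B^+_{ev}}\Bbbk_\lambda$ carries a natural $P^+$-supermodule structure via the epimorphism $P^+\to{\bf GL}(n)$ sending a matrix to its ${\bf X}$-block, and as such a $P^+$-supermodule it is isomorphic to $\mathrm{ind}^{P^+}_{B^+}\Bbbk_\lambda$. Since $P^+\simeq{\bf GL}(n)\ltimes U^+$ and $B^+\simeq B^+_{ev}\ltimes U^+$, this is again an immediate consequence of \cite[Theorem 10.1 and Corollary 10.2]{zub1}. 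In contrast with Lemma~\ref{main isomorphism}, here ${\bf P}(n)_{ev}\simeq{\bf GL}(n)$ is identified with the ${\bf X}$-block of $P^+$, so $B^+_{ev}$ is the standard upper triangular Borel of ${\bf GL}(n)$ and the ${\bf X}$-block of $T$ acts on $\Bbbk_\lambda$ through the weight $\lambda$ itself; this is why $H^0_{B^+_{ev}}(\lambda)$, with no sign change, appears on the left-hand side.

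Next, transitivity of induction gives $H^0_{B^+}(\lambda)=\mathrm{ind}^{{\bf P}(n)}_{B^+}\Bbbk_\lambda\simeq\mathrm{ind}^{{\bf P}(n)}_{P^+}\mathrm{ind}^{P^+}_{B^+}\Bbbk_\lambda\simeq\mathrm{ind}^{{\bf P}(n)}_{P^+}H^0_{B^+_{ev}}(\lambda)$. The left arm of Lemma~\ref{decomposition} exhibits the superscheme isomorphism ${\bf P}(n)\simeq P^+\times U^-$ under which the coordinate matrix ${\bf X}$ is unchanged on the $P^+$-factor while the $U^-$-factor is parametrised by ${\bf Y}'={\bf X}^{-1}{\bf Y}$. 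Plugging this factorisation into \cite[Lemma 6.1]{marzub} yields, for every $P^+$-supermodule $M\subseteq\Bbbk[P^+]$, a superspace isomorphism $M\otimes\Bbbk[U^-]\simeq\mathrm{ind}^{{\bf P}(n)}_{P^+}M\subseteq\Bbbk[{\bf P}(n)]$ realised by the substitution that leaves the coordinates of $M$ untouched and sends the coordinates of $\Bbbk[U^-]$ to the entries of ${\bf Y}'$. Taking $M=H^0_{B^+_{ev}}(\lambda)$, which by the previous paragraph sits inside $\Bbbk[P^+]$ as a sub-supercomodule of the Hopf sub-superalgebra $\Bbbk[x_{ij}\mid 1\leq i,j\leq n]_{\det({\bf X})}\simeq\Bbbk[{\bf GL}(n)]$, produces precisely the asserted map: the identity on ${\bf X}$, and ${\bf Y}\mapsto{\bf Y}'={\bf X}^{-1}{\bf Y}$.

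The only real work lies in the bookkeeping of the last step, namely checking that the ${\bf GL}(n)$-comodule embedding $H^0_{B^+_{ev}}(\lambda)\hookrightarrow\Bbbk[{\bf GL}(n)]$, transported into $\Bbbk[P^+]\subseteq\Bbbk[{\bf P}(n)]$ along the ${\bf X}$-block identification, is compatible with the factorisation of Lemma~\ref{decomposition}, so that the two tensor factors land in $\Bbbk[x_{ij}]_{\det({\bf X})}$ and in $\Bbbk[{\bf Y}']$ respectively, and that the torus-weight conventions agree with those fixed after Lemma~\ref{main isomorphism}. Because Lemmas~\ref{decomposition} and~\ref{symmetric} are deliberately mirror images of one another, this verification is the mirror image of the one already done for Lemma~\ref{main isomorphism}, obtained by swapping the roles of $P^+$ and $P^-$ and of ${\bf X}$ and ${\bf W}$; no genuinely new difficulty arises.
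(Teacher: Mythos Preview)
Your proof is correct and follows exactly the approach of the paper, which simply says to combine the canonical isomorphism $H^0_{B^+}(\lambda)\simeq \mathrm{ind}^{{\bf P}(n)}_{P^+}\mathrm{ind}^{P^+}_{B^+}\Bbbk_\lambda$ with \cite[Lemma~6.1]{marzub}; you have merely unpacked this in more detail by explicitly recording the $P^+$-analogue of Lemma~\ref{inflation for P^-} and invoking the $P^+\times U^-$-factorisation of Lemma~\ref{decomposition}.
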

\begin{proof}
Combine the canonical isomorphism $H^0_{B^+}(\lambda)\simeq \mathrm{ind}^{{\bf P}(n)}_{P^+}\mathrm{ind}^{P^+}_{B^+} \Bbbk_{\lambda}$
with \cite[Lemma 6.1]{marzub}.
\end{proof}

\begin{lm}\label{goodfiltration}
The supermodule $H^0_{B^-}(\lambda)_{l+2}$ has a good filtration with factors
$H^0_{B^-_{ev}}(\lambda+\epsilon_i+\epsilon_j)$ for $1\leq i < j\leq n$ such that $\lambda+\epsilon_i+\epsilon_j$ is dominant.

The module $H^0_{B^+}(\lambda^+)_{l+2r-2}$ has a good filtration with the factors
$H^0_{B^+_{ev}}(\lambda^+ -\epsilon_i-\epsilon_j)$ for $1\leq i\leq j\leq n$ such that $\lambda^+-\epsilon_i-\epsilon_j$ is anti-dominant and $\lambda^+_i\neq \lambda^+_j$ if $j=i+1$.
\end{lm}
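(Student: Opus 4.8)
The plan is to deduce both statements from the decomposition isomorphisms of Lemmas~\ref{main isomorphism} and~\ref{main2}, combined with the description of the purely-odd unipotent subgroups $U^+$ and $U^-$ and standard facts about good filtrations for ${\bf GL}(n)$. I will treat the first statement; the second follows by the symmetric argument using Lemma~\ref{main2} in place of Lemma~\ref{main isomorphism}, together with Proposition~\ref{l and l+2r components, minimal weight}(2) to translate between $\lambda$ and $\lambda^+$ and the anti-dominant order.

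First I would record that, under the isomorphism of Lemma~\ref{main isomorphism}, the degree grading on $H^0_{B^-}(\lambda)\subseteq\Bbbk[{\bf P}(n)]$ corresponds to the grading on $H^0_{B^-_{ev}}(\lambda)\otimes\Bbbk[U^+]$ in which $H^0_{B^-_{ev}}(\lambda)$ sits in degree $l=|\lambda|$ and each generator of $\Bbbk[U^+]$ (the entries $z''_{\bar i j}$ of ${\bf Z}''={\bf W}^{-1}{\bf Z}$, subject to ${\bf Z}''=-({\bf Z}'')^t$) carries degree $2$, since $\mathbf{Z}$-entries correspond to the odd negative roots $\epsilon_i+\epsilon_j$. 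Hence the degree-$(l+2)$ component is
\[
H^0_{B^-}(\lambda)_{l+2}\;\simeq\;H^0_{B^-_{ev}}(\lambda)\otimes\bigl(\Bbbk[U^+]\bigr)_1,
\]
and $(\Bbbk[U^+])_1$ is the linear span of the coordinates $z''_{\bar i j}$ with $i<j$ — an $n(n-1)/2$-dimensional space which, as a ${\bf GL}(n)$-module under the $W$-action (i.e.\ the action inflated from $P^-$), is $\wedge^2$ of the dual natural module, hence has weights $-\epsilon_i-\epsilon_j$ for $i<j$. Transporting through the identification $H^0_{ev}(\lambda)=\mathrm{ind}^{{\bf GL}(n)}_{B^+_{ev}}\Bbbk_{-\lambda}$ noted after Lemma~\ref{main isomorphism}, one gets a ${\bf GL}(n)$-module isomorphism $H^0_{B^-}(\lambda)_{l+2}\simeq H^0_{ev}(\lambda)\otimes\wedge^2 V^*$ in the appropriate normalization, where in the dominant normalization $\wedge^2 V^*$ is replaced by $\wedge^2 V$ with weights $\epsilon_i+\epsilon_j$, $i<j$.

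Next I would invoke the ${\bf GL}(n)$ tensor identity: $H^0_{ev}(\lambda)\otimes\wedge^2 V$ has a good filtration, and by the Pieri-type rule (Donkin/Mathieu; or directly via the exact sequences for $\wedge^2 V$) its good-filtration multiplicities are: $H^0_{ev}(\lambda+\epsilon_i+\epsilon_j)$ occurs with multiplicity one for each pair $i<j$ with $\lambda+\epsilon_i+\epsilon_j$ dominant, and nothing else occurs. Since $H^0_{B^-}(\lambda)_{l+2}$ is a ${\bf GL}(n)$-module (the odd root vectors lowering degree act trivially on it by the maximality built into the grading, exactly as in the proof of Proposition~\ref{l and l+2r components, minimal weight}), this is the required good filtration.

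The main obstacle is bookkeeping the sign/duality conventions: one must be careful that the $W$-block of the torus acts on $\Bbbk_\lambda$ with weight $-\lambda$, so that $H^0_{ev}(\lambda)$ is genuinely $\mathrm{ind}^{{\bf GL}(n)}_{B^+_{ev}}\Bbbk_{-\lambda}$, and that tensoring with the degree-$2$ piece shifts the highest weight by $+\epsilon_i+\epsilon_j$ rather than $-\epsilon_i-\epsilon_j$ in the end. For the second statement the extra subtlety is the condition ``$\lambda^+_i\neq\lambda^+_j$ if $j=i+1$'': this is precisely the refinement of the ${\bf GL}(n)$ Pieri rule for $H^0\otimes\wedge^2 V^*$ in the anti-dominant normalization — when $\lambda^+_i=\lambda^+_{i+1}$ the naive factor $H^0_{B^+_{ev}}(\lambda^+-\epsilon_i-\epsilon_{i+1})$ would be indexed by a non-anti-dominant weight and must be excluded, and one checks that the surviving list is exactly as stated. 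I would verify this last point by the short exact sequences resolving $\wedge^2 V^*$ (equivalently, by Weyl-module duality from the first statement applied to $-\lambda^+$), which makes the excluded cases transparent.
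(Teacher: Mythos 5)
Your treatment of the first statement is essentially the paper's argument: identify $H^0_{B^-}(\lambda)_{l+2}$ with $H^0_{B^-_{ev}}(\lambda)$ tensored with the span of the entries of the skew-symmetric matrix ${\bf Z}''$, recognize that span as $\Lambda^2(V)$ in the dominant normalization, and apply Donkin--Mathieu plus Pieri. That half is fine.

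The second statement is where your proposal goes wrong, in two related places. First, the relevant module is \emph{not} $\Lambda^2(V)^*$ but $S^2(V)^*$: the unipotent group $U^-$ consists of matrices with $Y=Y^t$ \emph{symmetric}, so the degree-one part of $\Bbbk[U^-]$ is the $\frac{n(n+1)}{2}$-dimensional span of the entries $y'_{ij}$, $i\leq j$, of the symmetric matrix ${\bf Y}'$, which transforms as $S^2(V)^*\simeq V_{B^+_{ev}}(-2\epsilon_1)$. This is already forced by the statement you are proving: the factor list runs over $1\leq i\leq j\leq n$, including the diagonal terms $\lambda^+-2\epsilon_i$, which cannot arise from an exterior square. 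Second, your explanation of the exclusion ``$\lambda^+_i\neq\lambda^+_{i+1}$'' is incorrect: if $\lambda^+_i=\lambda^+_{i+1}$ then $\lambda^+-\epsilon_i-\epsilon_{i+1}$ is typically still anti-dominant (e.g.\ $\lambda^+=(0,0)$ gives $(-1,-1)$), so these factors are \emph{not} ruled out by anti-dominance. They are ruled out because the multiplicity of $V(-\lambda^++\epsilon_i+\epsilon_{i+1})$ in a Weyl filtration of $V(-\lambda^+)\otimes V(2\epsilon_1)$ is zero unless the added boxes form a horizontal strip, which for adjacent rows requires $-\lambda^+_i>-\lambda^+_{i+1}$, i.e.\ $\lambda^+_i<\lambda^+_{i+1}$. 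The paper's route makes this clean: it rewrites $H^0_{B^+}(\lambda^+)_{l+2r-2}$ as $\bigl(V_{B^-_{ev}}(-\lambda^+)\otimes H^0_{B^-_{ev}}(2\epsilon_1)\bigr)^*$, uses that $S^2(V)$ is tilting for $p>2$ so that $H^0_{B^-_{ev}}(2\epsilon_1)=V_{B^-_{ev}}(2\epsilon_1)$, applies the Weyl-filtration Pieri rule for tensoring with $V(2\epsilon_1)$, and dualizes back. You would need to replace your $\Lambda^2 V^*$ computation by this $S^2$ argument (and note the $p>2$ hypothesis) for the second half to be correct.
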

\begin{proof}
By Lemma \ref{main isomorphism}, $H_{B^-}^0(\lambda)$, regarded as a sub-supermodule of $\Bbbk[{\bf P} (n)]$, can be identified with  $H^0_{B^-_{ev}}(\lambda)\otimes \Bbbk[{\bf Z}'']$, where $H^0_{B^-_{ev}}(\lambda)$ is identified with a sub-supermodule of $\Bbbk[{\bf P} (n)_{ev}]\simeq\Bbbk[{\bf W}]_{\det({\bf W})}$. 

The matrix ${\bf Z}''$ is skew-symmetric and ${\bf GL} (n)$ acts on its entries as
${\bf Z}''\mapsto g^t {\bf Z}''g$ for $g\in {\bf GL}(n)$. In fact, ${\bf Z}''={\bf W}^{-1}Z$ and ${\bf GL} (n)$ acts on the blocks as
\[{\bf X}\mapsto {\bf X}g, {\bf Y}\mapsto {\bf Y}(g^t)^{-1},  {\bf Z}\mapsto {\bf Z}g, {\bf W}\mapsto {\bf W}(g^t)^{-1}.  \]

Thus the space spanned by the entries of ${\bf Z}''$ is a ${\bf GL} (n)$-module that is isomorphic to $\Lambda^2(V)=H^0_{B^-_{ev}}(\epsilon_1+\epsilon_2)$,
where $V$ is the standard ${\bf GL} (n)$-module of dimension $n$.
Moreover, $\Bbbk[{\bf Z}'']$ is isomorphic to $\Lambda(\Lambda^2(V))$ and
\[H^0_{B^-}(\lambda)_{l+2k}\simeq H^0_{B^-_{ev}}(\lambda)\otimes\Lambda^{k}(\Lambda^2(V)) \text{ for } 0\leq k\leq \frac{n(n-1)}{2}.\]
Using Donkin-Mathieu theorem, Pieri's rule and comparing formal characters, we conclude that $H^0_{B^-}(\lambda)_{l+2}$ has good filtration with factors
$H^0_{B^-_{ev}}(\lambda+\epsilon_i+\epsilon_j)$, where $1\leq i < j\leq n$ and $\lambda+\epsilon_i+\epsilon_j$ is dominant.

For the supermodule $H^0_{B^+}(\mu)$, we use Lemma \ref{main2}. The matrix ${\bf Y}'$ is symmetric and ${\bf GL} (n)$ acts on its entries as ${\bf Y}'\mapsto g^{-1}{\bf Y}'(g^t)^{-1}$ for $g\in {\bf GL}(n)$. Thus, the ${\bf GL} (n)$-submodule spanned by the entries of ${\bf Y}'$ is isomorphic to $S^2(V)^*\simeq H^0_{B^-_{ev}}(2\epsilon_1)^*\simeq V_{B^+_{ev}}(-2\epsilon_1)$. 

For any $0\leq k\leq \frac{n(n+1)}{2}$, there is a ${\bf GL} (n)$-module isomorphism
\[H^0_{B^+}(\lambda^+)_{l+2r-2k}\simeq H^0_{B^+_{ev}}(\lambda^+)\otimes\Lambda^k(H^0_{B^-_{ev}}(2\epsilon_1)^*).  \]
In particular, we have
\[H^0_{B^+}(\lambda^+)_{l+2r-2}\simeq H^0_{B^+_{ev}}(\lambda^+)\otimes H^0_{B^-_{ev}}(2\epsilon_1)^*= V_{B^-_{ev}}(-\lambda^+)^*\otimes 
H^0_{B^-_{ev}}(2\epsilon_1)^*\simeq \]
\[(V_{B^-_{ev}}(-\lambda^+)\otimes H^0_{B^-_{ev}}(2\epsilon_1))^*.   \]
If  $char \ \Bbbk > 2$, then $S^2(V)\simeq H^0_{B^-_{ev}}(2\epsilon_1)$ is a tilting module, hence $H^0_{B^-_{ev}}(2\epsilon_1)=V_{B^-_{ev}}(2\epsilon_1)$. Thus
\[V_{B^-_{ev}}(-\lambda^+)\otimes H^0_{B^-_{ev}}(2\epsilon_1)=V_{B^-_{ev}}(-\lambda^+)\otimes V_{B^-_{ev}}(2\epsilon_1) \]
has a Weyl filtration with the factors $V_{B^-_{ev}}(-\lambda^+ +\epsilon_i+\epsilon_j)$, where $1\leq i\leq j\leq n$, $-\lambda^+ +\epsilon_i+\epsilon_j$ is dominant and if $j=i+1$, then $\lambda^+_i <\lambda^+_j$. Therefore, 
$H^0_{B^+}(\lambda^+)_{l+2r-2}$ has a good filtration with the corresponding factors
$H^0_{B^+_{ev}}(\lambda^+ -\epsilon_i-\epsilon_j)$ for $1\leq i\leq j\leq n$.
\end{proof}

We say that weights $\lambda,\mu$ of ${\bf P} (n)$ are linked, and write $\lambda\sim \mu$, if there is a sequence of weights $\lambda=\kappa_0,\kappa_1, \ldots, \kappa_s=\mu$ such that for each consequtive weights $\kappa_i$ and $\kappa_{i+1}$ there is an indecomposable ${\bf P}(n)$-supermodule which has composition factors
$L^{\epsilon_i}(\kappa_i)$ and $L^{\epsilon_{i+1}}(\kappa_{i+1})$ for some $\epsilon_i,\epsilon_{i+1}=\pm$.

The even linkage of weights $\lambda,\mu$ of ${\bf P} (n)_{ev}={\bf GL}(n)$ is denoted by $\lambda\sim_{ev} \mu$
and defined analogously. We have $\lambda\sim_{ev}\lambda$ if there is a sequence of weights $\lambda=\kappa_0,\kappa_1, \ldots, \kappa_s=\mu$ such that for each consequtive weigths $\kappa_i$ and $\kappa_{i+1}$ there is an indecomposable ${\bf GL}(n)$-module which has $L_{ev}(\kappa_i)$ and $L_{ev}(\kappa_{i+1})$ as its composition factors.

The even linkage for ${\bf GL}(n)$ was described in \cite{donkin} as follows.
A weight $\lambda$ has defect $d(\lambda)$ if $d(\lambda)$ is the largest nonnegative integer $d$ such that 
$\lambda_i-\lambda_{i+1}\equiv -1\pmod {p^d}$ for every $1\leq i\leq n$. 
Then $\lambda\sim_{ev}\mu$ if and only if $d(\lambda)=d(\mu)=d$ and there is a permutation $\sigma\in \Sigma_n$ such that $\lambda_i-i\equiv \mu_{\sigma(i)}-\sigma(i) \pmod {p^{d+1}}$ 
for each $i=1, \ldots, n$.

\section{A partial case of odd linkage when $H^0_{B^-_{ev}}(\lambda)$ is irreducible}

In this section we assume that $\lambda$ is a dominant weight such that $H^0_{B^-_{ev}}(\lambda)=L_{B_{ev}^-}(\lambda)$. This assumption is satisfied for all dominant weights if $char \ \Bbbk=0$.
In what follows, we will write $L_{ev}(\lambda)$ instead of $L_{B_{ev}^-}(\lambda)$.

If $V$ is a supermodule, we denote by $[V]$ its image in the Grothendieck group for the group ${\bf P} (n)_{ev}={\bf GL}(n)$.

We will require the following statement. 

\begin{lm}\label{dual}
Let $\kappa$ be a dominant weight. Then 
$[(H^0_{B^+_{ev}}(-\kappa))^*]=[H^0_{B^-_{ev}}(\kappa)]$.
\end{lm}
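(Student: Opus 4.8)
The statement $[(H^0_{B^+_{ev}}(-\kappa))^*]=[H^0_{B^-_{ev}}(\kappa)]$ is an identity in the Grothendieck group of ${\bf GL}(n)$, so it suffices to compare formal characters of the two sides; two ${\bf GL}(n)$-modules with the same character have the same class in the Grothendieck group. The key tool is the fact that $B^+_{ev}$ and $B^-_{ev}$ are opposite Borel subgroups of ${\bf GL}(n)$: in the identification used throughout the excerpt, $B^-_{ev}$ is the upper-triangular Borel and $B^+_{ev}$ the lower-triangular one (so $B^+_{ev} = w_0 B^-_{ev} w_0$, where $w_0$ is the longest element of the Weyl group $\Sigma_n$).

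First I would record the classical fact that, for a connected reductive group $G$ with opposite Borels $B$ and $B^-$, one has $\mathrm{ind}_{B^-}^G \Bbbk_\mu \cong (\mathrm{ind}_{B}^G \Bbbk_{-\mu})^*$; equivalently the dual Weyl modules built from opposite Borels differ by the twist $\mu \mapsto -w_0\mu$. Concretely, $\mathrm{ch}\, H^0_{B^-_{ev}}(\kappa)$ equals the Weyl character $\chi(\kappa)$ attached to the dominant weight $\kappa$ — this is because, with $B^-_{ev}$ the upper-triangular Borel, the highest weight of $H^0_{B^-_{ev}}(\kappa)$ with respect to the usual (dominant) order is $\kappa$, and by the Weyl/Euler character formula (Kempf vanishing, or simply comparing with the characteristic-zero case via the universal nature of Euler characteristics) its character is the Schur-type function $\chi(\kappa)$, which is $w_0$-invariant up to the shift already built into the standard formula. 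Likewise $H^0_{B^+_{ev}}(-\kappa)$, induced from the lower-triangular Borel with one-dimensional weight $-\kappa$, has highest weight $w_0(-\kappa)=-w_0\kappa$ with respect to the dominant order; since $-\kappa$ anti-dominant means $-w_0\kappa$ is dominant, $\mathrm{ch}\, H^0_{B^+_{ev}}(-\kappa)=\chi(-w_0\kappa)$.

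Dualizing, $\mathrm{ch}\big((H^0_{B^+_{ev}}(-\kappa))^*\big)$ is obtained from $\mathrm{ch}\, H^0_{B^+_{ev}}(-\kappa)$ by the substitution $e^\nu \mapsto e^{-\nu}$, i.e. it equals $\chi(-w_0\kappa)$ with all weights negated, which is $\chi(w_0\kappa)=\chi(\kappa)$ because Weyl characters are Weyl-group invariant. Comparing with $\mathrm{ch}\, H^0_{B^-_{ev}}(\kappa)=\chi(\kappa)$ from the previous paragraph gives the equality of characters, hence the claimed equality in the Grothendieck group. Alternatively, and perhaps more cleanly, one can avoid characters: apply the functor $-\mapsto {}^{w_0}(-)$ (conjugation by a representative of $w_0$, which swaps $B^+_{ev}$ and $B^-_{ev}$ and twists weights by $w_0$) to see $H^0_{B^+_{ev}}(-\kappa) \cong {}^{w_0}H^0_{B^-_{ev}}(-w_0\kappa)$ as ${\bf GL}(n)$-modules, then dualize and use that contragredient duality on a Weyl module $H^0_{B^-_{ev}}(\mu)$ sends its class to the class of $H^0_{B^-_{ev}}(-w_0\mu)$ in the Grothendieck group.

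**Main obstacle.** The only real subtlety is bookkeeping: keeping straight which Borel is which under the identification fixed after Lemma \ref{main isomorphism} (recall $B^-_{ev}$ is declared there to be the \emph{upper}-triangular Borel of ${\bf GL}(n)$, which is the opposite of the naive convention), and tracking the two $w_0$-twists — one from passing between opposite Borels, one from the contragredient dual — so that they compose to the identity on characters. There is no hard analysis here; once the conventions are pinned down the computation is a one-line character comparison, so I would be careful to state explicitly that $H^0_{B^+_{ev}}(-\kappa)\cong (H^0_{B^-_{ev}}(\kappa))^*$ up to the $w_0$-twist that is invisible at the level of characters, and cite the standard reference (Jantzen, \emph{Representations of Algebraic Groups}, II.2) for $\mathrm{ch}\,H^0 = \chi$.
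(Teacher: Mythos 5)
Your argument is correct and is essentially the paper's: the paper simply invokes its Section~1 definition $V_{B^-_{ev}}(\kappa)\simeq H^0_{B^+_{ev}}(-\kappa)^*$ together with the standard equality $[V_{B^-_{ev}}(\kappa)]=[H^0_{B^-_{ev}}(\kappa)]$, which are exactly the two facts you verify by comparing formal characters. One cosmetic slip: negating the weights of $\chi(-w_0\kappa)$ gives $\chi(-w_0(-w_0\kappa))=\chi(\kappa)$ directly, so the intermediate step ``$\chi(w_0\kappa)=\chi(\kappa)$ by Weyl-group invariance'' is not the right justification (the Euler characteristic is not invariant under the naive action of $w_0$ on the highest weight), but the conclusion is unaffected.
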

\begin{proof}
The claim follows from $(H^0_{B^+_{ev}}(-\kappa))^*=V_{B^-_{ev}}(\kappa)$ and $[V_{B^-_{ev}}(\kappa)]=
[H^0_{B^-_{ev}}(\kappa)]$.
\end{proof}

\begin{lm}\label{even linkage}
If $\lambda\sim_{ev}\mu$, then $\lambda\sim \mu$.
\end{lm}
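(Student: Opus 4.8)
The plan is to deduce the ${\bf P}(n)$-linkage $\lambda\sim\mu$ from the ${\bf GL}(n)$-linkage $\lambda\sim_{ev}\mu$ by lifting the indecomposable ${\bf GL}(n)$-modules witnessing the even linkage to ${\bf P}(n)$-supermodules. Recall from the excerpt that $H^0_{B^-_{ev}}(\nu)$ is the $W$-block inflation to $P^-$ of the ${\bf GL}(n)$-module $\mathrm{ind}^{{\bf GL}(n)}_{B^+_{ev}}\Bbbk_{-\nu}$, and that by Lemma~\ref{main isomorphism} we have $H^0_{B^-}(\nu)\subseteq\Bbbk[{\bf P}(n)]$ with minimal degree component $H^0_{B^-}(\nu)_l\simeq H^0_{B^-_{ev}}(\nu)$ as ${\bf GL}(n)$-modules. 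By Proposition~\ref{l and l+2r components, minimal weight}(3), for each dominant weight $\nu$ the minimal degree component $L(\nu)_{|\nu|}$ is exactly $L_{ev}(\nu)$, and all other composition factors of $L(\nu)$ as a ${\bf GL}(n)$-module have strictly larger degree. This degree bookkeeping is the key tool.

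First I would reduce to a single linkage step: it suffices to show that if there is an indecomposable ${\bf GL}(n)$-module $N$ with composition factors $L_{ev}(\kappa)$ and $L_{ev}(\kappa')$, then $\kappa\sim\kappa'$ as weights of ${\bf P}(n)$. By standard arguments (a module with two composition factors has a nonsplit self-extension or a uniserial subquotient), one may assume $N$ is a nonsplit extension, say $0\to L_{ev}(\kappa)\to N\to L_{ev}(\kappa')\to 0$ with, without loss of generality, $|\kappa'|\geq|\kappa|$ (or else $\kappa'$ dominates $\kappa$ in the ${\bf GL}(n)$ order). Then $\mathrm{Ext}^1_{{\bf GL}(n)}(L_{ev}(\kappa'),L_{ev}(\kappa))\neq 0$, which by the standard $\mathrm{Ext}$-to-$\mathrm{Hom}$ argument gives a nonzero map $L_{ev}(\kappa')\to H^0_{B^-_{ev}}(\kappa)/L_{ev}(\kappa)$, i.e. $L_{ev}(\kappa')$ appears as a composition factor of $H^0_{B^-_{ev}}(\kappa)$.

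Next I would promote this to ${\bf P}(n)$: since $H^0_{B^-_{ev}}(\kappa)\simeq H^0_{B^-}(\kappa)_{|\kappa|}$ is the bottom-degree slice of the indecomposable ${\bf P}(n)$-supermodule $H^0_{B^-}(\kappa)$ (indecomposable because it has simple socle $L(\kappa)$), the composition factor $L_{ev}(\kappa')$ of $H^0_{B^-_{ev}}(\kappa)$ is, by Proposition~\ref{l and l+2r components, minimal weight}(3) applied in reverse, the bottom-degree component $L(\kappa')_{|\kappa'|}$ of some composition factor $L^{\epsilon'}(\kappa')$ of the ${\bf P}(n)$-supermodule $H^0_{B^-}(\kappa)$. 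Hence $H^0_{B^-}(\kappa)$ is an indecomposable ${\bf P}(n)$-supermodule containing both $L(\kappa)$ (its socle) and $L^{\epsilon'}(\kappa')$ as composition factors, which is precisely the definition of $\kappa\sim\kappa'$. Chaining these single steps along a sequence realizing $\lambda\sim_{ev}\mu$ yields $\lambda\sim\mu$.

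The main obstacle is the bookkeeping in the second paragraph: one must handle the case $|\kappa'|<|\kappa|$, where the extension goes the other way, by instead looking at $H^0_{B^-_{ev}}(\kappa')$ and arguing symmetrically — but here one should be a little careful, because it is $\kappa$, not $\kappa'$, whose weight is smaller, and the relevant $\mathrm{Ext}$ could be detected in $H^0_{B^-_{ev}}(\kappa')$ only after checking that the degrees cooperate; alternatively, one can sidestep this by replacing $N$ with its dual and invoking Lemma~\ref{dual} together with the self-duality properties of the even linkage classes (the defect and the congruence condition in Donkin's description are manifestly invariant under $\nu\mapsto -w_0\nu$). I expect the cleanest route is to phrase the single step symmetrically from the start: $L_{ev}(\kappa)$ and $L_{ev}(\kappa')$ lie in the same ${\bf GL}(n)$-block, so one of them, say $L_{ev}(\kappa)$ with $\kappa$ of minimal degree among the two, has the other as a composition factor of $H^0_{B^-_{ev}}(\kappa)$ after possibly enlarging the chain by one more link through a common weight; then the lifting argument applies verbatim.
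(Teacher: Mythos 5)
Your argument is correct, and its first half coincides with the paper's: both reduce, via the block theory of ${\bf GL}(n)$ and the symmetry of the relation $\sim$, to the single step in which $L_{ev}(\mu)$ is a composition factor of $H^0_{B^-_{ev}}(\lambda)$. (Your worry about $|\kappa'|<|\kappa|$ is vacuous, since even-linked weights have equal degree — the dot action of the affine Weyl group preserves $|\lambda|$ — and the genuine asymmetry, namely which of the two weights is dominance-larger, is exactly what the symmetry of $\sim$ disposes of, as in the paper's appeal to ``symmetry''.) Where you diverge is the lifting step. The paper takes a submodule $M\subseteq H^0_{B^-_{ev}}(\lambda)$ with top $L_{ev}(\mu)$ and applies the exact functor $\mathrm{ind}^{{\bf P}(n)}_{P^-}$, using Lemma \ref{main isomorphism} to identify $H^0_{B^-}(\lambda)$ with $\mathrm{ind}^{{\bf P}(n)}_{P^-}H^0_{B^-_{ev}}(\lambda)$; this produces a subquotient $\mathrm{ind}^{{\bf P}(n)}_{P^-}L_{ev}(\mu)\subseteq H^0_{B^-}(\mu)$ whose socle contains $L(\mu)$. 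You instead argue by degree bookkeeping: every ${\bf P}(n)$-composition factor $L^{\epsilon}(\nu)$ of $H^0_{B^-}(\lambda)$ has $|\nu|\geq|\lambda|$ and contributes to the bottom graded piece $H^0_{B^-}(\lambda)_{|\lambda|}\simeq H^0_{B^-_{ev}}(\lambda)$ either nothing or exactly $L_{ev}(\nu)$ (when $|\nu|=|\lambda|$, by Proposition \ref{l and l+2r components, minimal weight}(3)), so the occurrence of $L_{ev}(\mu)$ there forces some $L^{\epsilon'}(\mu)$ to occur in the indecomposable supermodule $H^0_{B^-}(\lambda)$. Both routes are sound; yours trades the exactness of $\mathrm{ind}^{{\bf P}(n)}_{P^-}$ for the grading plus Proposition \ref{l and l+2r components, minimal weight}(3), which is a slightly more elementary counting argument, whereas the paper's construction is closer in spirit to the multiplicity comparisons it reuses in Proposition \ref{main}. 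The only blemishes are cosmetic: the reduction to a single nonsplit extension is justified by block theory rather than by your parenthetical about self-extensions, and ``minimal degree among the two'' at the end should be ``dominance-smaller'' (the degrees are equal) — neither affects the proof.
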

\begin{proof}
Using the strong linkage principle for ${\bf GL}(n)$ and symmetry, we can assume that $L_{ev}(\mu)$ is a composition factor of $H^0_{B^-_{ev}}(\lambda)$. Let $M$ be a submodule of $H^0_{B^-_{ev}}(\lambda)$ whose top is isomorphic to $L_{ev}(\mu)$. Since ${\bf P}(n)/P^-$ is an affine (purely odd) superscheme, the functor $\mathrm{ind}^{{\bf P}(n)}_{P^-}$ is exact, and Lemma \ref{main isomorphism} implies that $H^0_{B^-}(\lambda)$ contains a sub-supermodule a factor of which is isomorphic to $\mathrm{ind}^{{\bf P}(n)}_{P^-} L_{ev}(\mu)\subseteq H^0_{B^-}(\mu)$. Thus $L(\mu)$ is a composition factor of $H^0_{B^-}(\lambda)$.
\end{proof}

\begin{pr}\label{main}
Assume $\lambda$ is a dominant weight such that $H^0_{B^-_{ev}}(\lambda)=L_{ev}(\lambda)$.
If $\lambda+2\epsilon_j$ is dominant for $1\leq j\leq n$, then 
$\lambda\sim \lambda+2\epsilon_j$.
If $1\leq i<n$ is such that $\lambda_i=\lambda_{i+1}$ and $\lambda+\epsilon_i+\epsilon_{i+1}$ is dominant, then 
$\lambda \sim \lambda+\epsilon_i+\epsilon_{i+1}$.
\end{pr}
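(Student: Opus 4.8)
The plan is to realize each linkage by exhibiting a single indecomposable ${\bf P}(n)$-supermodule that contains both $L(\lambda)$ and the target simple among its composition factors. For the second assertion I take this module to be $H^0_{B^-}(\lambda)$ itself: it is indecomposable with simple socle $L(\lambda)$, and its lowest-degree component, in degree $l=|\lambda|$, is $H^0_{B^-_{ev}}(\lambda)=L_{ev}(\lambda)$ by hypothesis. The degree bookkeeping of Proposition \ref{l and l+2r components, minimal weight}(3) shows that every composition factor $L^{\epsilon}(\nu)$ of $H^0_{B^-}(\lambda)$ contributes $L_{ev}(\nu)$ precisely in degree $|\nu|$; since $H^0_{B^-}(\lambda)$ is concentrated in degrees $\ge l$ and its degree-$l$ part is exactly $L_{ev}(\lambda)$, occurring with ${\bf GL}(n)$-multiplicity one, $L(\lambda)$ is the only composition factor supported in degree $\le l$. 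Consequently, if some $L_{ev}(\mu)$ with $|\mu|=l+2$ occurs in $H^0_{B^-}(\lambda)_{l+2}$ with strictly larger multiplicity than in $L(\lambda)_{l+2}$, then $L^{\epsilon}(\mu)$ must be a composition factor of $H^0_{B^-}(\lambda)$, and indecomposability forces $\lambda\sim\mu$.

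So everything comes down to comparing $H^0_{B^-}(\lambda)_{l+2}$ with $L(\lambda)_{l+2}$ at $\mu=\lambda+\epsilon_i+\epsilon_{i+1}$. By the proof of Lemma \ref{goodfiltration}, $H^0_{B^-}(\lambda)_{l+2}\cong L_{ev}(\lambda)\otimes\Lambda^2(V)$, and since $\lambda_i=\lambda_{i+1}$ and $\lambda+\epsilon_i+\epsilon_{i+1}$ is dominant, $H^0_{B^-_{ev}}(\lambda+\epsilon_i+\epsilon_{i+1})$ occurs as a section of its good filtration (adding $\epsilon_i+\epsilon_{i+1}$ to $\lambda$ is a vertical, but not a horizontal, $2$-strip), so $L_{ev}(\lambda+\epsilon_i+\epsilon_{i+1})$ occurs there with some multiplicity. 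On the other hand, the Weyl supermodule $H^0_{B^+}(-\lambda)^{*}$ surjects onto $L(\lambda)$ by a degree-preserving map, and dualizing Lemma \ref{main2} together with $H^0_{B^-_{ev}}(\lambda)=L_{ev}(\lambda)$ identifies its degree-$(l+2)$ component with $L_{ev}(\lambda)\otimes S^2(V)=H^0_{B^-_{ev}}(\lambda)\otimes H^0_{B^-_{ev}}(2\epsilon_1)$; hence $L(\lambda)_{l+2}$ is a ${\bf GL}(n)$-quotient of $L_{ev}(\lambda)\otimes S^2(V)$. Comparing the $\Lambda^2$- and $S^2$-Pieri filtrations of these two tilting modules shows the vertical-$2$-strip section at $\lambda+\epsilon_i+\epsilon_{i+1}$ has no counterpart on the $S^2$ side, whence the multiplicity of $L_{ev}(\lambda+\epsilon_i+\epsilon_{i+1})$ in $L(\lambda)_{l+2}$ is strictly smaller than in $H^0_{B^-}(\lambda)_{l+2}$; this yields the new composition factor and hence $\lambda\sim\lambda+\epsilon_i+\epsilon_{i+1}$.

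The first assertion is handled symmetrically, in the anti-dominant picture, using a module $H^0_{B^+}(\mu)$ for an anti-dominant weight $\mu$ built from $\lambda$ and $j$ (again indecomposable, with simple socle and with highest-degree component $H^0_{B^+_{ev}}(\mu)$), and invoking the second part of Lemma \ref{goodfiltration}: its penultimate-degree layer has a good filtration whose sections include the \emph{diagonal} ones $H^0_{B^+_{ev}}(\mu-2\epsilon_k)$, which carry no extra hypothesis and which do not appear in the $\Lambda^{2}$-type description of the corresponding layer of the socle. Thus $L_{B^+_{ev}}(\mu-2\epsilon_k)$ is a genuinely new composition factor; choosing $\mu$ and $k$ so that $\mu-2\epsilon_k$ is the anti-dominant extreme of $\lambda$ (when $\lambda$ and $\lambda+2\epsilon_j$ are regular) — and otherwise of a dominant weight $\tilde\nu$ which differs from $\lambda$ only by having some equal adjacent pairs replaced by $+\,\epsilon_m+\epsilon_{m+1}$, so that $\tilde\nu\sim\lambda$ by iterating the second assertion — gives $\lambda\sim\lambda+2\epsilon_j$.

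The main obstacle is the multiplicity strict-inequality at the end of the second paragraph (and its anti-dominant analogue). In characteristic $0$ it is immediate from complete reducibility and Pieri's rule. In characteristic $p$ one must rule out that $L_{ev}(\lambda+\epsilon_i+\epsilon_{i+1})$ creeps into $L_{ev}(\lambda)\otimes S^2(V)$ as a composition factor of some horizontal-$2$-strip section $H^0_{B^-_{ev}}(\mu)$ with $\mu>_{ev}\lambda+\epsilon_i+\epsilon_{i+1}$, through ${\bf GL}(n)$-strong linkage; this is exactly where the standing hypothesis that $H^0_{B^-_{ev}}(\lambda)$ be irreducible does the work, bounding the composition factors of those Pieri sections enough to force the inequality. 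The degree-accounting identifying $\tilde\nu\sim\lambda$ in the first assertion similarly requires a short induction on the number of equal adjacent pairs of $\lambda$.
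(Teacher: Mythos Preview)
Your plan for the second assertion coincides with the paper's: compare $H^0_{B^-}(\lambda)_{l+2}\cong L_{ev}(\lambda)\otimes\Lambda^2(V)$ with an upper bound for $L(\lambda)_{l+2}$ coming from $M=(H^0_{B^+}(-\lambda))^*$, whose $(l+2)$-layer is $L_{ev}(\lambda)\otimes S^2(V)$. Where you diverge is in the first assertion and in the handling of the characteristic-$p$ obstruction, and the latter is a genuine gap.

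For the first assertion the paper does \emph{not} pass to a new module $H^0_{B^+}(\mu)$ with a mysterious $\mu$ built from $\lambda$ and $j$, nor does it introduce an auxiliary $\tilde\nu$ reached via repeated use of the second assertion. It reuses the very same $M=(H^0_{B^+}(-\lambda))^*$: since $[M_l]=[H^0_{B^-_{ev}}(\lambda)]=[L_{ev}(\lambda)]=[L(\lambda)_l]$, the radical of $M$ lives in degrees $\ge l+2$; and now the excess of $L_{ev}(\lambda+2\epsilon_j)$ in $[M_{l+2}]$ over $[H^0_{B^-}(\lambda)_{l+2}]\ge [L(\lambda)_{l+2}]$ forces a new composition factor in $\mathrm{rad}(M)$. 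This is cleaner and avoids your regularity caveat and the induction on equal adjacent pairs.

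The real problem is your last paragraph. You assert that the hypothesis $H^0_{B^-_{ev}}(\lambda)=L_{ev}(\lambda)$ ``bounds the composition factors of those Pieri sections enough to force the inequality.'' It does not: irreducibility of $H^0_{B^-_{ev}}(\lambda)$ says nothing about the composition factors of $H^0_{B^-_{ev}}(\lambda+2\epsilon_j)$ or $H^0_{B^-_{ev}}(\lambda+\epsilon_a+\epsilon_b)$. For instance, with $n=2$ and $\lambda=(p-2,0)$ one has $H^0_{ev}(\lambda)$ simple, yet $H^0_{ev}(\lambda+2\epsilon_1)=H^0_{ev}(p,0)$ is reducible. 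So in characteristic $p$ it can genuinely happen that $L_{ev}(\lambda+\epsilon_i+\epsilon_{i+1})$ appears in some section $H^0_{B^-_{ev}}(\lambda+2\epsilon_j)$ on the $S^2$ side, or vice versa, and then your strict multiplicity inequality for that specific weight fails. The paper confronts this head-on: it first proves both assertions under the extra assumption that no such cross even-linkage occurs, and then removes it by a block argument. One lets $S$ be the set of all target weights (the dominant $\lambda+2\epsilon_j$ and the dominant $\lambda+\epsilon_i+\epsilon_{i+1}$ with $\lambda_i=\lambda_{i+1}$), picks in each even-linkage block $B$ meeting $S$ a weight $\kappa\in B\cap S$ that is \emph{not} a composition factor of any other $H^0_{ev}(\sigma)$ with $\sigma\in B\cap S$; the previous case applies to $\kappa$, giving $\kappa\sim\lambda$, and then every $\sigma\in B\cap S$ satisfies $\sigma\sim_{ev}\kappa$, hence $\sigma\sim\lambda$ by Lemma~\ref{even linkage}. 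Your proposal is missing this step, and the sentence you wrote in its place does not replace it.
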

\begin{proof} 
First, we prove the statements under the assumption that 
$L_{ev}(\lambda+2\epsilon_j)$ does not appear as a composition factor of any $H^0_{B^-_{ev}}(\lambda+\epsilon_i+\epsilon_{i+1})$ such that $\lambda_i=\lambda_{i+1}$; and if $\lambda_i=\lambda_{i+1}$ then $L_{ev}(\lambda+\epsilon_i+\epsilon_{i+1})$ does not appear as a composition factor of any $H^0_{B_{ev}^-}(\lambda+2\epsilon_j)$. This assumption will be removed later.

Let $\lambda$ be a dominant weight of length $l$, and $\pi=-\lambda^+$. Then $L(\lambda)^*\simeq L(\pi)$.
Since $L(\pi)^*\simeq L(\lambda)$, 
the highest anti-dominant weight $\pi^+$ of $L(\pi)$ satisfies $\pi^+=-\lambda$. 
Since $L(\pi)\simeq L_{B^+}(\pi^+)$ is the socle of $H_{B^+}^0(\pi^+)$, by taking the duals we obtain that $L(\lambda)=L(\pi)^*$ is a factorimage that is the top 
of $(H^0_{B^+}(\pi^+))^*=M$.

From $H^0_{B^+}(\pi^+)= \oplus_{i=0}^{n^+} H^0_{B^+}(\pi^+)_{-l-2i}$, we get 
$M= \oplus_{i=0}^{n^+} M_{l+2i}$. Since $\pi^+=-\lambda$, using Lemma \ref{dual}, we have 
\[[(H^0_{B^+_{ev}}(\pi^+-\epsilon_i-\epsilon_j))^*]=[H^0_{B^-_{ev}}(\lambda+\epsilon_i+\epsilon_j)].\] 
It follows from Lemma \ref{goodfiltration} that 
\[[M_{l+2}]=\sum [H^0_{B^-_{ev}}(\lambda+\epsilon_i+\epsilon_j)],\]
where the sum is over all $1\leq i\leq j\leq n$ such that $\lambda+\epsilon_i+\epsilon_j$ is dominant and $\lambda_i\neq \lambda_j$ if $j=i+1$.

On the other hand, Lemma \ref{goodfiltration} also gives
\[[H^0_{B^-}(\lambda)_{l+2}]=\sum_{1\leq i<j\leq n} [H^0_{B^-_{ev}}(\lambda+\epsilon_i+\epsilon_j)],\]
where the sum is over dominant weights $\lambda+\epsilon_i+\epsilon_j$.

Assume that $\lambda$ is such that $\lambda_i=\lambda_{i+1}$ for some $i$ and $L_{ev}(\lambda+\epsilon_i+\epsilon_{i+1})$
does not appear as a composition factor in any of $H^0_{B^-_{ev}}(\lambda+2\epsilon_j)$ (This is automatically satisfied if $char \ \Bbbk=0$.)  Then the multiplicity of $[L_{ev}(\lambda+\epsilon_i+\epsilon_{i+1}]$ in $[H^0_{B^-}(\lambda)_{l+2}]$ is bigger than its multiplicity in $[M_{l+2}]$, hence in $[L(\lambda)_{l+2}]$. Therefore,  the simple supermodule $L(\lambda+\epsilon_i+\epsilon_{i+1})$
appears as a composition factor of $(H^0_{B^-}(\lambda)/L(\lambda))_{l+2}$.
Therefore, there is a vector $w$ of weight $\kappa\sim_{ev}\lambda+\epsilon_i+\epsilon_{i+1}$ that is one of the generators of the socle of the supermodule
$H^0_{B^-}(\lambda)_{l+2}/L(\lambda)_{l+2}$. Then, the assumption $H^0_{B^-}(\lambda)_l=L(\lambda)_l$ implies that $w$ is primitive modulo $L(\lambda)$, meaning that $Dist(U^-)w \subset L(\lambda)$. Thus $L(\kappa)$ is a composition 
factor of $H^0_{B^-}(\lambda)$ and $\kappa\sim \lambda$.
Since $\lambda+\epsilon_i+\epsilon_{i+1}\sim_{ev}\kappa\sim \lambda$, we conclude that
$\lambda\sim \lambda+\epsilon_i+\epsilon_{i+1}$.

Assume $L_{ev}(\lambda+2\epsilon_j)$ does not appear as a composition factor in any of $H^0_{B^-_{ev}}(\lambda+\epsilon_i+\epsilon_{i+1})$ such that $\lambda_i=\lambda_{i+1}$ (This is automatically satisfied if $char \ \Bbbk=0$.)
Then the multiplicity of $[L_{ev}(\lambda+2\epsilon_j)]$ in $[M_{l+2}]$ is bigger than its multiplicity in $[H^0_{B^-}(\lambda)_{l+2}]$, hence in $[L(\lambda)_{l+2}]$. 
Since $[M_l]=[(H^0_{B^+_{ev}}(\pi^+))^*]=[H^0_{B^-_{ev}}(\lambda)]$
and by the assumption of the Proposition there is $[H^0_{B^-_{ev}}(\lambda)]=[L(\lambda)_l]$, we obtain $M_l=L(\lambda)_l$, which is equivalent to $rad(M_l)=0$.
Therefore, there is a vector $w$ of weight $\kappa\sim_{ev}\lambda+2\epsilon_j$ that is one of the generators of the socle of the module
$M_{l+2}$. Then, $M_l=L(\lambda)_l$ implies that 
$w$ is primitive, meaning that $Dist(U^-)w =0$. Thus $L(\kappa)$ is a composition 
factor of $M$ and $\kappa\sim \lambda$.
Since $\lambda+2\epsilon_j\sim_{ev}\kappa\sim \lambda$, we conclude that
$\lambda\sim \lambda+2\epsilon_j$.

Finally, denote by $S$ the set of all dominant weights of type $\lambda+2\epsilon_j$ and $\lambda+\epsilon_i+\epsilon_{i+1}$ such that $\lambda_i=\lambda_{i+1}$.  Now consider the case when a weight $\lambda+2\epsilon_j$ is linked to some $\lambda+\epsilon_i+\epsilon_{i+1}$ such that $\lambda_i=\lambda_{i+1}$. Denote by $B$ a block that contains all weights from $S$ that are even-linked to 
such $\lambda+2\epsilon_j$. Then there is a weight $\kappa$ from $B\cap S$ such that $L_{ev}(\kappa)$ does not appear as a composition factor in any other $H^0_{ev}(\sigma)$ for $\sigma\in B\cap S$ different from $\kappa$. 
By the previous considerations, $\kappa\sim \lambda$. Then for every $\sigma\in B\cap S$, we have 
$\sigma\sim_{ev}\kappa\sim \lambda$, thus $\lambda\sim \sigma$. Repeating this argument for all blocks $B$ as above, we conclude that 
$\lambda\sim \lambda+2\epsilon_j$.
Considerations for $\lambda\sim\lambda+\epsilon_i+\epsilon_{i+1}$ whenever $\lambda_i=\lambda_{i+1}$ 
are analogous.
\end{proof}

Proposition \ref{main} is our main tool for establishing (odd) linkage of weights. In what follows, we use Proposition \ref{main} without explicit reference to establish this partial odd linkage of weights.

From now on, we denote by $F$ the set of all dominant weights $\lambda$ of ${\bf GL}(n)$ such that 
$H^0_{B^-}(\lambda)=L(\lambda)$.
We denote by $F_d$ the subset of $F$ consisting of weights $\lambda$ of defect $d$.

\section{A reduction to the weights $\omega^a_{-i}$}

To a dominant weight $\lambda$ we assign the vector $m(\lambda)=(\lambda_1-\lambda_n, \ldots, \lambda_1-\lambda_i, \lambda_1-\lambda_{i-1},  \ldots, \lambda_1-\lambda_2)$. We impose the lexicographic order $\prec$ on the vectors $m(\lambda)$.

\begin{lm} \label{lm3.1}
If $\mu\unlhd \lambda$, then $m(\mu)\prec m(\lambda)$. Additionally, if $\mu\lhd \lambda$, then $m(\mu)\precneqq m(\lambda)$.
\end{lm}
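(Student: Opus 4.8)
The plan is to reduce everything to explicit inequalities between the partial sums of the coordinate vectors. Recall that the dominance order $\mu\unlhd\lambda$ amounts to the system $\sum_{i=1}^{k}\mu_i\le\sum_{i=1}^{k}\lambda_i$ for $1\le k\le n$, with equality at $k=n$, and that the $k$-th entry of $m(\lambda)$ is $\lambda_1-\lambda_{n+1-k}$ for $1\le k\le n-1$, so that the entry built from the \emph{last} coordinate $\lambda_n$ is the most significant one in the lexicographic comparison. If $m(\mu)=m(\lambda)$ then $\mu-\lambda$ is a constant vector, which together with $|\mu|=|\lambda|$ forces $\mu=\lambda$; hence it is enough to treat the case $\mu\ne\lambda$ and prove the strict statement $m(\mu)\precneqq m(\lambda)$, which covers both clauses of the lemma (the case $\mu=\lambda$ being trivial). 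Assuming $\mu\ne\lambda$, let $k_0\in\{1,\dots,n-1\}$ be the first position at which $m(\mu)$ and $m(\lambda)$ differ and set $j_0:=n+1-k_0\in\{2,\dots,n\}$; thus $j_0$ is the \emph{largest} index $j$ with $\mu_1-\mu_j\ne\lambda_1-\lambda_j$, and the goal reduces exactly to the single inequality $\mu_1-\mu_{j_0}<\lambda_1-\lambda_{j_0}$.

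For the main case $j_0\le n-1$: by the maximality of $j_0$ we have $\mu_j-\lambda_j=\mu_1-\lambda_1=:-c$ for every $j>j_0$, and $c\ge 0$ since $\mu_1\le\lambda_1$. Summing over $j>j_0$ and subtracting from $\sum_{j=1}^{n}(\mu_j-\lambda_j)=0$ gives $\sum_{j=1}^{j_0}(\mu_j-\lambda_j)=(n-j_0)c$, while $\mu\unlhd\lambda$ gives $\sum_{j=1}^{j_0}(\mu_j-\lambda_j)\le 0$; since $n-j_0\ge 1$ this forces $c=0$. Hence $\mu_1=\lambda_1$, $\mu_j=\lambda_j$ for all $j>j_0$, and therefore $\sum_{j=1}^{j_0}\mu_j=\sum_{j=1}^{j_0}\lambda_j$. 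Combining this with $\sum_{j=1}^{j_0-1}\mu_j\le\sum_{j=1}^{j_0-1}\lambda_j$ yields $\mu_{j_0}\ge\lambda_{j_0}$, and since $\mu_{j_0}\ne\lambda_{j_0}$ (which is now equivalent to $\mu_1-\mu_{j_0}\ne\lambda_1-\lambda_{j_0}$) we conclude $\mu_{j_0}>\lambda_{j_0}$, i.e. $m(\mu)_{k_0}=\lambda_1-\mu_{j_0}<\lambda_1-\lambda_{j_0}=m(\lambda)_{k_0}$, as required.

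The remaining case $j_0=n$ (so $k_0=1$) is immediate and does not use the constant $c$: the inequality at $k=1$ gives $\mu_1\le\lambda_1$, and the inequality at $k=n-1$ together with $|\mu|=|\lambda|$ gives $\mu_n\ge\lambda_n$, hence $\mu_1-\mu_n\le\lambda_1-\lambda_n$; this is strict because $j_0=n$ means precisely $\mu_1-\mu_n\ne\lambda_1-\lambda_n$. I expect the only delicate point to be exactly this split into $j_0\le n-1$ and $j_0=n$: when $j_0=n$ the block of coordinates to the right of $j_0$ is empty, so the argument that forces $c=0$ degenerates and one must instead read off the estimate directly from the first and last partial sums. (A conceptually cleaner but not shorter alternative would be to first establish the standard fact that $\mu\lhd\lambda$ with both weights dominant can be realized by a chain of dominant weights, each obtained from the previous by adding $\epsilon_r-\epsilon_s$ with $r<s$, and then verify on a single such step that $m$ strictly increases in $\prec$; checking that the intermediate weights remain $\unlhd\lambda$ is, however, itself a short argument, so I would favour the direct partial-sum computation above.)
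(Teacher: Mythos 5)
Your proof is correct and rests on exactly the same computation as the paper's: combining the partial-sum inequality at $k=1$ with the one at $k=j-1$ and the total-degree equality to control $\mu_1-\mu_j$ versus $\lambda_1-\lambda_j$. The only difference is organizational — the paper iterates downward from $j=n$, propagating equalities until the first strict inequality appears, whereas you locate the first disagreement index $j_0$ up front and argue there directly (with the $j_0=n$ boundary case split off); both are valid and essentially identical in substance.
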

\begin{proof} The condition 
$\mu \unlhd \lambda$ is equivalent to the series of inequalities 
\begin{equation}\label{eq1} \mu_1+\ldots + \mu_i\leq \lambda_1+\ldots +\lambda_i\end{equation}
for $i=1, \dots, n$ and an equality
$\mu_1+\ldots +\mu_n=\lambda_1+\ldots + \lambda_n$.

We assume the equality $\mu_1+\ldots +\mu_n=\lambda_1+\ldots + \lambda_n$. Then 
the inequalities $(\ref{eq1})$ imply $2\mu_1+\mu_2+\ldots + \mu_{n-1}\leq 2 \lambda_1+\lambda_2+\ldots + \lambda_{n-1}$, which is equivalent to 
$\mu_1-\mu_n\leq \lambda_1-\lambda_n$.
If $\mu_1-\mu_n=\lambda_1-\lambda_n$, then $\mu_1=\lambda_1$, $\mu_1+\ldots +\mu_{n-1}=\lambda_1+\ldots + \lambda_{n-1}$ and $\mu_n=\lambda_n$. The inequalities $(\ref{eq1})$ imply $2\mu_1+\mu_2+\ldots + \mu_{n-2}\leq 2 \lambda_1+\lambda_2+ \ldots + \lambda_{n-2}$. Therefore, $\mu_1-\mu_{n-1}\leq \lambda_1-\lambda_{n-1}$.
If $\mu_1-\mu_{n-1}=\lambda_1-\lambda_{n-1}$, then  $\mu_1+\mu_2+\ldots + \mu_{n-2}= \lambda_1+\lambda_2+ \ldots + \lambda_{n-2}$ and $\mu_{n-1}=\lambda_{n-1}$.
Proceeding like this we establish that $\mu_1-\mu_i\leq \lambda_1-\lambda_i$ for each $i=n, \ldots, 2$.
It is clear that $m(\mu)= m(\lambda)$ and $\mu_1+\ldots +\mu_n=\lambda_1+\ldots + \lambda_n$ implies 
$\mu=\lambda$.
\end{proof}

\begin{lm}\label{lm3.2}
Assume $\mu\in F$, $i>1$ and a weight $\kappa=\mu+2\epsilon_i$ or a weight $\kappa=\mu+\epsilon_i+\epsilon_{i+1}$ provided $\mu_i=\mu_{i+1}$ is dominant. Then $\mu\sim\kappa$ and 
$m(\kappa)\precneqq m(\mu)$.
\end{lm}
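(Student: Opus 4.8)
The plan is to prove Lemma \ref{lm3.2} by combining Proposition \ref{main} with the monotonicity Lemma \ref{lm3.1}, plus the description of even linkage for ${\bf GL}(n)$ recalled at the end of Section 1. The linkage assertion $\mu\sim\kappa$ is almost immediate: since $\mu\in F$, we have $H^0_{B^-}(\mu)=L(\mu)$, hence in particular $H^0_{B^-_{ev}}(\mu)=L_{ev}(\mu)$ (this is forced, because $H^0_{B^-}(\mu)_l\simeq H^0_{B^-_{ev}}(\mu)$ by Lemma \ref{goodfiltration}/Lemma \ref{main isomorphism} and $L(\mu)_l\simeq L_{ev}(\mu)$ by Proposition \ref{l and l+2r components, minimal weight}(3)), so $\mu$ satisfies the hypotheses of Proposition \ref{main}. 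Applying Proposition \ref{main} directly gives $\mu\sim\mu+2\epsilon_i$ when $\mu+2\epsilon_i$ is dominant, and $\mu\sim\mu+\epsilon_i+\epsilon_{i+1}$ when $\mu_i=\mu_{i+1}$ and $\mu+\epsilon_i+\epsilon_{i+1}$ is dominant. That settles $\mu\sim\kappa$ in both cases.

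The second assertion $m(\kappa)\precneqq m(\mu)$ is the combinatorial heart. Here the point is that $i>1$, so in either case $\kappa$ is obtained from $\mu$ by adding $2$ to $\mu_i$ (case $\kappa=\mu+2\epsilon_i$), or by adding $1$ to each of $\mu_i$ and $\mu_{i+1}$ (case $\kappa=\mu+\epsilon_i+\epsilon_{i+1}$), and in neither case is the first coordinate $\mu_1$ changed. Recall $m(\mu)=(\mu_1-\mu_n,\ldots,\mu_1-\mu_2)$. Since $\mu_1$ is unchanged, passing from $\mu$ to $\kappa$ strictly decreases the entry $\mu_1-\mu_i$ (to $\mu_1-\mu_i-2$), and in the second case also strictly decreases $\mu_1-\mu_{i+1}$; it leaves all entries $\mu_1-\mu_j$ with $j<i$ unchanged. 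In the lexicographic order $\prec$ on $m(\cdot)$, the entries are listed so that $\mu_1-\mu_j$ for smaller $j$ come later (i.e.\ are more significant, or more precisely: one must check the exact ordering convention in the definition preceding Lemma \ref{lm3.1}). The key is that the most significant coordinate in which $m(\kappa)$ and $m(\mu)$ differ is a coordinate where $m(\kappa)$ is strictly smaller. I would argue: all coordinates indexed by $j$ with $j<i$ agree, and among the remaining coordinates the first one that changes is $\mu_1-\mu_i\mapsto \mu_1-\mu_i-2<\mu_1-\mu_i$; hence $m(\kappa)\precneqq m(\mu)$.

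One subtlety I want to handle carefully is the precise indexing in the definition $m(\lambda)=(\lambda_1-\lambda_n,\ldots,\lambda_1-\lambda_i,\lambda_1-\lambda_{i-1},\ldots,\lambda_1-\lambda_2)$: the coordinates run from $j=n$ down to $j=2$, so "$j<i$" coordinates appear at the end of the tuple, and lexicographic order compares from the front, i.e.\ starts with $\lambda_1-\lambda_n$. Under this reading, to compare $m(\mu)$ and $m(\kappa)$ I scan from the $j=n$ end: coordinates $\mu_1-\mu_j$ for $j>i$ are unchanged (in the first case; in the second case $j>i+1$ unchanged), the first changed coordinate encountered is $\mu_1-\mu_i$ (or $\mu_1-\mu_{i+1}$ in the second case), and there the value strictly decreases. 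The coordinates for $j<i$, which are unchanged anyway, sit further back and are irrelevant once a strict decrease has occurred earlier. So in all cases $m(\kappa)\precneqq m(\mu)$.

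The expected main obstacle is purely bookkeeping: getting the direction of the lexicographic comparison right given the somewhat unusual ordering of entries in $m(\lambda)$ (entries indexed from $n$ down to $2$), and making sure that in the case $\kappa=\mu+\epsilon_i+\epsilon_{i+1}$ with $i+1$ possibly equal to $n$ or with $i>1$ we have not altered any coordinate that comes strictly before $\mu_1-\mu_{\max(i,i+1)}$ in the tuple. Since $i>1$ guarantees $\mu_1$ itself is untouched, there is no risk that the comparison is controlled by the (absent) "$j=1$" slot; this is exactly why the hypothesis $i>1$ is present, and it should be flagged explicitly in the write-up. Once the indexing is pinned down, the argument is a one-line lexicographic observation; it can also be deduced from Lemma \ref{lm3.1} by noting $\mu\lhd\kappa$ and $m(\mu)=m(\lambda)\Leftrightarrow\mu=\lambda$, but the direct computation is cleaner here.
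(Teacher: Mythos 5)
Your proof is correct and follows essentially the same route as the paper: Proposition \ref{main} gives $\mu\sim\kappa$, and the lexicographic inequality holds because no coordinate of $m(\cdot)$ increases while at least one (namely $\mu_1-\mu_i$, and also $\mu_1-\mu_{i+1}$ in the second case) strictly decreases, so $m(\kappa)\precneqq m(\mu)$ under any reading of the tuple; your extra step checking that $\mu\in F$ forces $H^0_{B^-_{ev}}(\mu)=L_{ev}(\mu)$ (by comparing degree-$l$ components) is a detail the paper leaves implicit. One caveat: your closing aside that the second claim ``can also be deduced from Lemma \ref{lm3.1} by noting $\mu\lhd\kappa$'' does not work, since $|\kappa|=|\mu|+2$ whereas the order $\unlhd$ in Lemma \ref{lm3.1} presupposes equal total degree; fortunately you do not rely on it.
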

\begin{proof}
If $\kappa$ as above is dominant, then $\mu\sim \kappa$ by Proposition \ref{main}. 
Since $i>1$, we have $\kappa_1=\lambda_1$. We have $\kappa_j=\lambda_j$ for $j<i$ 
and $\kappa_i>\lambda_i$, showing that $m(\kappa)\precneqq m(\lambda)$.
\end{proof}

For any integer $a$ and $i=0, \ldots n$ define $\omega^a_{-i}=(a, \ldots, a, a-1, a-2, \ldots, a-i)$.
Note $\omega^a_{-n}=(a-1, a-2, \ldots, a-n)$.

\begin{pr}\label{p3.3}
Every dominant weight $\lambda$ of ${\bf P} (n)$ is linked to one of the weights $\omega^a_{-i}$ for some $a$ and $i=0, \ldots, n$ such that $\omega^a_{-i}\in F_0$.
\end{pr}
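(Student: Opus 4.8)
The plan is to argue by induction on the lexicographic order $\prec$ on $m(\lambda)$, using Lemma \ref{lm3.2} as the basic reduction step and the description of even linkage for ${\bf GL}(n)$ as the tool that lets us move the weight into a more favourable position before applying that step. Since $m$ takes values in a well-ordered set (once we fix the value $|\lambda|$, or rather $\lambda_1$; note that $\omega^a_{-i}$ with $a$ integer sweeps out all relevant residue/translation classes), it suffices to show: if $\lambda$ is dominant and not of the form $\omega^a_{-i}$ for any $a$, then $\lambda$ is linked to some dominant $\mu$ with $m(\mu)\precneqq m(\lambda)$; and, separately, to verify that every $\omega^a_{-i}$ is linked to one lying in $F_0$.

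For the main reduction step, suppose $\lambda$ is not of the form $\omega^a_{-i}$. First I would reduce to the case $\lambda\in F$: if $H^0_{B^-}(\lambda)\neq L(\lambda)$, then $H^0_{B^-_{ev}}(\lambda)$ has a composition factor $L_{ev}(\mu)$ with $\mu\lhd\lambda$, so by the strong linkage principle for ${\bf GL}(n)$ and Lemma \ref{even linkage} we get $\lambda\sim\mu$ with $m(\mu)\precneqq m(\lambda)$ by Lemma \ref{lm3.1}, and we are done; so assume $\lambda\in F$. Now the shape of a dominant weight not equal to any $\omega^a_{-i}$ is that, reading the strictly-decreasing-by-one ``staircase'' tail from the bottom, there is some index $i\geq 2$ at which the staircase pattern fails, i.e. either $\lambda_{i-1}-\lambda_i\geq 2$, or $\lambda_{i-1}=\lambda_i$ with $i\geq 2$ while the entries below continue to decrease by $1$ — more precisely, let $i$ be the largest index with $i\geq 2$ such that $\lambda_{i-1}-\lambda_i\neq 1$. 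If $\lambda_{i-1}-\lambda_i\geq 2$ (equivalently, $\lambda_i$ can be increased by $1$ keeping dominance: either $\lambda+\epsilon_i$, but we need an even shift), I would take $\kappa=\lambda+2\epsilon_i$ when $\lambda_{i-1}-\lambda_i\geq 2$, which is dominant; if instead $\lambda_{i-1}=\lambda_i$ and also $\lambda_i=\lambda_{i+1}$ fails in the right way I would take $\kappa=\lambda+\epsilon_i+\epsilon_{i+1}$ (this requires $\lambda_i=\lambda_{i+1}$, forcing me to choose the index at the correct end of a plateau). In either case Lemma \ref{lm3.2} applies: $\mu\sim\kappa$ and $m(\kappa)\precneqq m(\lambda)$, completing the induction.

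The remaining point is to handle the weights $\omega^a_{-i}$ themselves and to land in $F_0$. Here I would observe that $\omega^a_{-i}$ has all consecutive differences equal to $0$ (in the top $n-i$ slots) or $1$ (in the staircase), so $d(\omega^a_{-i})$ is $0$ unless $p=2$ (excluded) — wait, more carefully: defect requires $\lambda_j-\lambda_{j+1}\equiv -1 \pmod{p^d}$ for \emph{all} $j$, and $\omega^a_{-i}$ with $i<n$ has some difference equal to $0\not\equiv -1\pmod p$, so $d(\omega^a_{-i})=0$ automatically when $i<n$; the case $i=n$, $\omega^a_{-n}=(a-1,\dots,a-n)$, has all differences $1$, so $d$ can be positive, but then a dominant translate/Weyl conjugate of smaller $m$-value with defect $0$ is even-linked to it (via Donkin's criterion, adjusting $a$), and hence linked to it by Lemma \ref{even linkage}; this replaces $\omega^a_{-n}$ by some $\omega^{a'}_{-i'}\in F_0$ or by a weight with strictly smaller $m$, to which the induction applies. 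Finally one checks $\omega^a_{-i}\in F$: since $H^0_{B^-_{ev}}(\omega^a_{-i})$ must be shown irreducible — this is precisely the content promised for Section 4 — I would either defer to the irreducibility criterion there or, for the purposes of this proposition, note that if $\omega^a_{-i}\notin F$ then its induced module has a lower composition factor $\mu\lhd\omega^a_{-i}$, giving a link to a $\prec$-smaller weight and hence reducing again.

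\emph{Main obstacle.} The delicate part is not the induction scheme but ensuring that the single increment step genuinely decreases $m(\lambda)$ while staying inside $F$ (or escaping it productively): Lemma \ref{lm3.2} needs $\mu\in F$, and applying Proposition \ref{main} in the form used there already requires the non-appearance hypotheses, which in positive characteristic are nontrivial. I expect the real work is the careful case analysis of \emph{which} index $i\geq2$ to increment — distinguishing a genuine ``gap'' $\lambda_{i-1}-\lambda_i\geq2$ from a ``plateau'' $\lambda_i=\lambda_{i+1}$ — and checking that in every dominant $\lambda$ not of the form $\omega^a_{-i}$ at least one such move is available with the target still dominant; the parity bookkeeping (which $\epsilon$) rides along for free since linkage in the sense defined ignores the parity label.
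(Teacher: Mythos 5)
Your argument is essentially the paper's proof: the paper picks a weight $\sigma$ linked to $\lambda$ with $m(\sigma)$ lexicographically minimal (your induction over the well-ordered set of $m$-values is the same device), uses Lemma \ref{even linkage} to force $\sigma\in F$, and uses Lemma \ref{lm3.2} to rule out any index $i>1$ admitting a dominant $\sigma+2\epsilon_i$ or $\sigma+\epsilon_i+\epsilon_{i+1}$ with $\sigma_i=\sigma_{i+1}$, which forces all consecutive differences of $\sigma$ to be $0$ or $1$ with every $0$ preceding every $1$, i.e.\ $\sigma=\omega^a_{-i}$. Your closing worries are unnecessary: membership in $F$ is forced by minimality exactly as you suggest (Proposition \ref{main} in its final form needs no extra non-appearance hypotheses), and since $p>2$ every $\omega^a_{-i}$ automatically has defect $0$ (a difference equal to $1$ satisfies $1\equiv -1 \pmod{p^d}$ only for $d=0$), so $\omega^a_{-n}$ needs no separate treatment beyond the identification $\omega^a_{-n}=\omega^{a-1}_{-(n-1)}$.
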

\begin{proof}
Let $\sigma$ be a dominant weight linked to $\lambda$ such that $m(\sigma)$ is minimal. Then by Lemma \ref{even linkage}, $\sigma\in F_d$ for some $d\geq 0$. By Lemma \ref{lm3.2}, there is no index $i>1$ such that $\sigma+\alpha_i$, where $\alpha_i=2\epsilon_i$ or $\epsilon_i+\epsilon_{i+1}$ provided $\sigma_i=\sigma_{i+1}$, 
is dominant. 
Therefore, we can assume that each $\sigma_i-\sigma_{i+1}$ is either 0 or 1. Additionally, if 
$\sigma_j-\sigma_{j+1}=1$ and $\sigma_{j+1}-\sigma_{j+2}=0$, then  
$\sigma+\epsilon_{j+1}+\epsilon_{j+2}\precneqq \sigma$. This implies that every $\sigma_k-\sigma_{k+1}=0$ must appear before every $\sigma_j-\sigma_{j+1}=1$. Therefore, $\sigma=\omega^a_{-i}$ for some $a$ and $i=0, \ldots, n$.
\end{proof}

There is a further reduction for a small value of $p$. 

\begin{pr}\label{p3.4}
Assume $p\leq 2n-1$. Then every dominant weight of ${\bf P} (n)$ is linked to one of $\omega^a_{-i}$ where $0\leq i\leq \frac{p-1}{2}$.
\end{pr}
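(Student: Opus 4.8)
The plan is to start from Proposition \ref{p3.3}, which tells us every dominant weight is linked to some $\omega^a_{-i}$ with $0\le i\le n$, and show that whenever $i\ge \frac{p+1}{2}$ (equivalently $2i\ge p+1$, which is where the hypothesis $p\le 2n-1$ makes this range nonempty), the weight $\omega^a_{-i}$ is linked to some $\omega^{a'}_{-i'}$ with strictly smaller $i'$. Iterating, this brings us down to the range $0\le i\le \frac{p-1}{2}$. First I would write out $\omega^a_{-i}=(a^{\,n-i},a-1,a-2,\dots,a-i)$ and observe that the differences $\omega^a_{-i}$ takes are all $0$ or $1$, so even linkage of a weight in this family to another weight of this family is governed by Donkin's combinatorial criterion recalled at the end of Section 1: the defect of $\omega^a_{-i}$ is $0$ as soon as some consecutive difference equals $0$ (which happens once $i<n$) and otherwise is at least $1$. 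So for $i<n$ the even-linkage class of $\omega^a_{-i}$ is determined by the multiset $\{(\omega^a_{-i})_k-k \bmod p\}$.

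Next I would use the odd-linkage moves provided by Proposition \ref{main}: if $\mu\in F$ and $\mu+2\epsilon_j$ is dominant, or $\mu+\epsilon_i+\epsilon_{i+1}$ is dominant when $\mu_i=\mu_{i+1}$, then $\mu$ is linked to that weight. The key point is that for $i\ge\frac{p+1}{2}$ one can reach, through a chain of even- and restricted-odd-linkage steps, a weight of the form $\omega^{a'}_{-(i-?)}$ with a genuinely smaller "tail length". Concretely: the tail $(a-1,a-2,\dots,a-i)$ has length $i$, and once $i$ exceeds $\frac{p-1}{2}$ there are two tail entries $a-r$ and $a-r'$ with $a-r\equiv a-r'\pmod p$ differing appropriately so that adding $2\epsilon$ somewhere (or $\epsilon+\epsilon$ on a repeated pair created after an even-linkage permutation) lands on a dominant weight whose $m$-vector is smaller; combined with Lemma \ref{lm3.2} (which gives both the link $\mu\sim\kappa$ and $m(\kappa)\precneqq m(\mu)$ for index $i>1$) the minimality-of-$m(\sigma)$ argument from the proof of Proposition \ref{p3.3} then forbids $i\ge\frac{p+1}{2}$ at the minimal weight. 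So really the cleanest route is to rerun the proof of Proposition \ref{p3.3} but with the extra observation: a minimal (w.r.t.\ $\prec$) weight $\sigma$ linked to $\lambda$ cannot have tail length $i\ge\frac{p+1}{2}$, because then two tail entries are congruent mod $p$ and one produces a dominant $\sigma+\alpha$ with $\alpha=2\epsilon_j$ ($j>1$) that is still linked (via even linkage moving the congruent entries into adjacent position, then Proposition \ref{main}) and has strictly smaller $m$-vector, contradicting minimality.

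The step I expect to be the main obstacle is verifying that the required dominant neighbour actually exists and that the chain of moves stays inside the $F$-hypothesis of Proposition \ref{main}. The weights $\omega^a_{-i}$ for $i\le n$ need not a priori lie in $F=F_0$ (i.e.\ need not satisfy $H^0_{B^-}(\lambda)=L(\lambda)$), so one must either invoke a criterion from Section 4 guaranteeing $H^0_{B^-_{ev}}(\omega^a_{-i})$ is irreducible for this specific shape — the tail $(a-1,\dots,a-i)$ glued to a constant block is exactly the kind of "hook-free, narrow" weight for which $H^0_{ev}$ is simple — or one reinterprets the argument purely through even linkage plus Lemma \ref{even linkage} together with a single application of Proposition \ref{main} at a weight that is genuinely in $F$. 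I would handle this by first permuting, via even linkage (Lemma \ref{even linkage}), the tail so that the two entries congruent mod $p$ become equal after subtracting their positions, then noting that the resulting weight still has consecutive differences in $\{0,1\}$ hence defect $0$ hence (by the irreducibility criterion of Section 4, citing the appropriate statement there) lies in $F_0$, and only then applying Proposition \ref{main}. Then $\prec$-minimality finishes it exactly as in Proposition \ref{p3.3}.
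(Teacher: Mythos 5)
Your overall strategy --- take $\sigma$ with $m(\sigma)$ minimal among weights linked to $\lambda$, invoke Proposition \ref{p3.3} to get $\sigma=\omega^a_{-i}$, and rule out $i\geq\frac{p+1}{2}$ by exhibiting a linked weight with strictly smaller $m$-vector --- is exactly the paper's. But the mechanism you propose for the exclusion step does not work, and the correct mechanism is both different and much simpler.

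The concrete claim your argument rests on is false: two tail entries $a-r$ and $a-r'$ of $\omega^a_{-i}$ satisfy $a-r\equiv a-r'\pmod p$ only when $r'-r\geq p$, i.e.\ only when $i\geq p+1$; in the critical range $\frac{p+1}{2}\leq i\leq n$ with $i\leq p$ there are simply no such pairs (e.g.\ $p=5$, $n=4$, $i=3$: the tail is $a-1,a-2,a-3$). What actually becomes available at $i\geq\frac{p+1}{2}$ is not a repeated residue among the entries but a $\rho$-shifted difference hitting $p+1$: with $j=\frac{p+1}{2}\leq i$ one has $d_{n-j,n}(\omega^a_{-i})=2j=p+1$, so the affine reflection $s_{\epsilon_{n-j}-\epsilon_n,p}$ under the dot action sends $\omega^a_{-i}$ to
\[
\mu=\omega^a_{-i}-(\epsilon_{n-j}-\epsilon_n)=a^{n-i}(a-1)\ldots(a-i+j+1)(a-i+j-1)^2(a-i+j-2)\ldots(a-i+1)^2,
\]
which is dominant, satisfies $\mu\lhd\omega^a_{-i}$, and is even-linked to $\omega^a_{-i}$. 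By Lemma \ref{even linkage} it is linked, and by Lemma \ref{lm3.1} it has $m(\mu)\precneqq m(\omega^a_{-i})$, contradicting minimality. This is a single even-linkage step; no odd-linkage move from Proposition \ref{main} is needed, so the entire concern in your last paragraph about whether $\omega^a_{-i}$ lies in $F_0$ is moot for this proposition. Your fallback route --- permute by even linkage, then add $2\epsilon_j$ or $\epsilon_j+\epsilon_{j+1}$ --- also cannot be run as stated: the minimality of $m(\sigma)$ already kills all such odd moves at $\sigma$ itself (that is how Proposition \ref{p3.3} arrives at $\omega^a_{-i}$), and after an even-linkage step to some intermediate weight you would still have to compare the $m$-vector of the final weight with $m(\omega^a_{-i})$, which your sketch does not do. (A minor side remark: the defect of $\omega^a_{-n}$ is also $0$, since $1\equiv-1\pmod{p^d}$ forces $d=0$ for odd $p$; your ``otherwise at least $1$'' is not right, though nothing hinges on it.)
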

\begin{proof}
Let $\sigma$ be a dominant weight with minimal $m(\sigma)$ linked to a given dominant weight $\lambda$. Proposition \ref{p3.3} implies that $\sigma=\omega_{-i}^a$ for some $a$ and $1\leq i\leq n$.	
The condition $p\leq 2n-1$ is equivalent to $j=\frac{p+1}{2}\leq n$.
We will show that if $j\leq i\leq n$, then $m(\omega^a_{-i})$ is  not minimal.

Consider $j\leq i\leq n$. Then 
\[\omega^a_{-i}\sim_{ev}a^{n-i}(a-1)\ldots(a-i+j+1)(a-i+j-1)^2(a-i+j-2)\ldots (a-i+1)^2=\mu\]
because $\mu=s_{\epsilon_{n-j}-\epsilon_n, p}\bullet \omega^a_{-i}$.
Since $\mu$ is dominant and $\mu\lhd \omega^a_{-i}$, we conclude that $m(\omega^a_{-i})$ is not minimal by Lemma \ref{lm3.1}.
\end{proof}

\section{Jantzen's criteria of irreducibility of $V_{ev}(\lambda)$}

Let $V_{ev}(\lambda)$ be the Weyl module for ${\bf GL} (n)$. According to II, 8.21 of \cite{jan} or Satz 9 of \cite{jan-1}, we have the following: 

\begin{pr}\label{p4.1}
Let $\lambda$ be a dominant weight. Then the Weyl module $V_{ev}(\lambda)=H^0_{ev}(-w_0(\lambda))^*$ is irreducible if and only if the following condition $(*_{i,j})$ is satisfied for every positive root $\alpha=\epsilon_i-\epsilon_j$, where $1\leq i<j\leq n$:
\begin{equation}\tag{$*_{i,j}$}
\begin{aligned}&
\text{Write } \lambda_i-\lambda_j+(j-i)=cp^s+bp^{s+1}, \text{ where } 0< c<p. \text{ Then there are 
integers } \\
& i=i_0<i_1<\ldots < i_b<i_{b+1}=j \text{ such that } \lambda_{i_r}-\lambda_{i_{r+1}}+(i_{r+1}-i_r)=p^{s+1} \text{ for each } \\
&r=0, \ldots b-1, \text{ or for each } r=1, \ldots, b.
\end{aligned}
\end{equation}
\end{pr}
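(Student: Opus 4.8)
The statement is attributed to Jantzen (\cite{jan}, II.8.21) and Jantzen's earlier paper (\cite{jan-1}, Satz 9), so the honest plan is not to reprove it from scratch but to \emph{derive the stated form from the cited references}. Jantzen's sum formula expresses the Jantzen filtration of $V_{ev}(\lambda)$ in terms of $\sum_{\alpha>0}\sum_{0<mp<\langle\lambda+\rho,\alpha^\vee\rangle}\nu_p(mp)\,\chi(s_{\alpha,mp}\bullet\lambda)$, and irreducibility of $V_{ev}(\lambda)$ is equivalent to the vanishing of this whole sum. The plan is: (i) recall that for ${\bf GL}(n)$ the positive roots are $\alpha=\epsilon_i-\epsilon_j$ with $\langle\lambda+\rho,\alpha^\vee\rangle=\lambda_i-\lambda_j+(j-i)$; (ii) invoke Jantzen's criterion in the form that $V_{ev}(\lambda)$ is simple iff for each such $\alpha$ a certain combinatorial condition on the $p$-adic behaviour of $\lambda_i-\lambda_j+(j-i)$ holds; (iii) unwind that condition into the explicit chain statement $(*_{i,j})$.

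**Key steps, in order.** First I would fix notation: set $r = \lambda_i - \lambda_j + (j-i) = \langle\lambda+\rho,(\epsilon_i-\epsilon_j)^\vee\rangle$ and write its base-$p$ ``leading block'' as $r = cp^s + bp^{s+1}$ with $0<c<p$, i.e. $s=\nu_p(r)$ is the $p$-adic valuation and the next nonzero part has the indicated shape; one must note $b$ here is \emph{not} a single $p$-adic digit but the integer $\lfloor r/p^{s+1}\rfloor$. Second, I would state Jantzen's irreducibility criterion for $\mathrm{SL}_n$/$\mathrm{GL}_n$: $V_{ev}(\lambda)$ is simple iff for every pair $i<j$, writing the interval $[i,j]$ of indices, one can interpolate a chain $i=i_0<i_1<\dots<i_{b+1}=j$ (of length governed by $b$) along which each consecutive ``gap'' $\lambda_{i_r}-\lambda_{i_{r+1}}+(i_{r+1}-i_r)$ equals exactly $p^{s+1}$ — with the harmless ambiguity of whether the extremal gap is absorbed at the left end ($r=0,\dots,b-1$) or the right end ($r=1,\dots,b$), which is why the statement says ``or''. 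Third, I would check the translation from Jantzen's ${\rm SL}_n$ conventions (simple roots, fundamental weights) to the ${\bf GL}(n)$ coordinates $\lambda=(\lambda_1,\dots,\lambda_n)$ and $\rho=(\tfrac{n}{2},\tfrac{n-2}{2},\dots)$ used in the excerpt, verifying that $\langle\lambda+\rho,(\epsilon_i-\epsilon_j)^\vee\rangle$ indeed comes out to $\lambda_i-\lambda_j+(j-i)$ and that the dot-action reflection formula displayed earlier, $s_{\epsilon_i-\epsilon_j,kp}\bullet\lambda=\lambda-(\lambda_i-\lambda_j-i+j-kp)(\epsilon_i-\epsilon_j)$, is consistent with this. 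Fourth, I would record the logical reduction: since each $V_{ev}(\lambda)$-simplicity is by the sum formula equivalent to the cancellation of all the $\chi(s_{\alpha,mp}\bullet\lambda)$ terms, and since these terms are controlled root-by-root, simplicity holds iff $(*_{i,j})$ holds for every $i<j$ — the chain condition being exactly the combinatorial shadow of ``all terms from the root $\epsilon_i-\epsilon_j$ cancel against deeper-valuation terms.''

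**Main obstacle.** The real work is bookkeeping: matching Jantzen's formulation (which is phrased for $\mathrm{SL}_n$ using strings of simple-root data and a recursion on the $p$-adic digits) to the clean ``chain of indices with prescribed gaps'' form $(*_{i,j})$, and in particular getting the off-by-one details right — why the chain has $b+1$ interior steps rather than $b$, why the two index ranges ``$r=0,\dots,b-1$ or $r=1,\dots,b$'' both appear, and why only the single leading block $cp^s+bp^{s+1}$ (and not the full $p$-adic expansion) enters. I expect no genuine mathematical difficulty beyond faithfully transcribing and re-indexing the cited result; the proof in the paper should therefore consist of citing \cite[II.8.21]{jan} (equivalently \cite[Satz 9]{jan-1}), specializing to type $A_{n-1}$ with the torus of ${\bf GL}(n)$, and checking that $(*_{i,j})$ is the verbatim specialization of that criterion.
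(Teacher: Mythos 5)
Your proposal matches the paper's treatment: the paper gives no proof at all, merely citing \cite[II.8.21]{jan} and \cite[Satz 9]{jan-1}, and the content of Proposition \ref{p4.1} is exactly the transcription of Jantzen's irreducibility criterion into the ${\bf GL}(n)$ coordinates $d_{i,j}(\lambda)=\lambda_i-\lambda_j+(j-i)$ that you describe. The bookkeeping you identify (the leading block $cp^s+bp^{s+1}$, the chain of $b$ interior indices, and the left/right ``or'') is precisely what the citation is asserting, so your plan is the same as the paper's.
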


Since the condition $(*_{ij})$ is trivially satisfied if $b=0$, in later considerations we will assume that $b>0$. 

Since $H^0_{ev}(\lambda)$ and $V_{ev}(\lambda)$ have the same formal characters, their images in the Grothendieck group of $GL(n)$ coincide. Therefore $V_{ev}(\lambda)$ is irreducible if and only if $H^0_{ev}(\lambda)$ is irreducible.
Thus, Proposition \ref{p4.1} gives us effective criteria to describe when $\lambda\in F_0$.
Every time we claim that a given weight $\lambda\in F_0$, it has been verified using this criterion.

To simplify the notation, we denote $d_{k,l}(\lambda)=\lambda_k-\lambda_l+(l-k)$.
The expressions $d_{k,l}(\lambda)$ are addititive, meaning that 
$d_{k,l}(\lambda)+d_{l,t}(\lambda)=d_{k,t}(\lambda)$ for $k\leq l\leq t$. Since $d_{k,l}(\lambda)\geq 0$, we have 
$d_{k,l}(\lambda)\leq d_{u,v}(\lambda)$ whenever $u\leq k\leq l\leq v$. 

The following lemma will simplify the verification that certain weights $\lambda$ belong to $F_0$. 

\begin{lm}\label{new2}
Assume $\lambda$ is a dominant weight and there are indices $1\leq k\leq l\leq n$ such that $d_{1,k}(\lambda)<p$, 
$\lambda_{k}=\ldots =\lambda_l$, and $d_{l,n}(\lambda)<p$.

Assume $1\leq u<v\leq n$ and write $d_{u,v}(\lambda)=cp^s+bp^{s+1}$, where $0<c<p$.

Then for each $0< j < b$,
there are indices $u<r_j,t_j<v$ such that $d_{u,r_j}(\lambda)=cp^s+jp^{s+1}$ and 
$d_{u,t_j}(\lambda)=jp^{s+1}$.

If there is an index $r$ such that $d_{u,r}(\lambda)=cp^s$ or there is an index $t$ such that 
$d_{u,t}(\lambda)=bp^{s+1}$, then the condition $(*_{u,v})$ is satisfied. 
\end{lm}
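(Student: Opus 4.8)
\textbf{Proof plan for Lemma \ref{new2}.}

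The plan is to use additivity and monotonicity of the functions $d_{k,l}(\lambda)$ together with the structural hypothesis $\lambda_k=\ldots=\lambda_l$ to manufacture the intermediate indices required by the Jantzen criterion $(*_{u,v})$. First I would record the basic inequalities: since $d$ is additive and each $d_{k,k+1}(\lambda)=\lambda_k-\lambda_{k+1}+1\geq 1\geq 0$, the function $i\mapsto d_{u,i}(\lambda)$ is nondecreasing in $i$ for $i\geq u$, it starts at $d_{u,u}(\lambda)=0$, ends at $d_{u,v}(\lambda)=cp^s+bp^{s+1}$, and it jumps by exactly $1$ across every index in the "flat" range $k\le i\le l$ where $\lambda_i=\lambda_{i+1}$. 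The key observation is that on the flat range the sequence $d_{u,i}(\lambda)$ increases by unit steps, so it hits \emph{every} integer value between $d_{u,k}(\lambda)$ and $d_{u,l}(\lambda)$; combined with $d_{1,k}(\lambda)<p$ and $d_{l,n}(\lambda)<p$ (hence $d_{u,k}(\lambda)\le d_{1,k}(\lambda)<p$ and $d_{u,v}(\lambda)-d_{u,l}(\lambda)=d_{l,v}(\lambda)\le d_{l,n}(\lambda)<p$ by monotonicity) this forces $d_{u,k}(\lambda)< cp^s$ when $s\ge 1$ or when $c+bp>1$, and $d_{u,l}(\lambda)>cp^s+(b-1)p^{s+1}$, i.e. the target values $cp^s+jp^{s+1}$ and $jp^{s+1}$ for $0<j<b$ all lie strictly inside the closed interval $[d_{u,k}(\lambda),d_{u,l}(\lambda)]$.

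Next, for each $0<j<b$ I would locate $r_j$ and $t_j$. Since $0<c<p$ and $0\le j\le b$, all the numbers $jp^{s+1}$ and $cp^s+jp^{s+1}$ satisfy $0<cp^s+jp^{s+1}\le cp^s+bp^{s+1}=d_{u,v}(\lambda)$ and $0<jp^{s+1}<d_{u,v}(\lambda)$ (the latter because $j\ge 1$ gives $jp^{s+1}\ge p^{s+1}>cp^s$ when... here one uses $c<p$, so $cp^s<p^{s+1}\le jp^{s+1}$, and $jp^{s+1}\le(b-1)p^{s+1}<cp^s+bp^{s+1}$). By the unit-step property on the flat range, whenever a target value $N$ satisfies $d_{u,k}(\lambda)\le N\le d_{u,l}(\lambda)$ there is an index $i$ with $k\le i\le l$ and $d_{u,i}(\lambda)=N$; I would verify the two displayed inequalities $d_{u,k}(\lambda)<cp^s$ (equivalently, $d_{u,k}(\lambda)\le p^s-1<cp^s$ — here the hypothesis $d_{1,k}(\lambda)<p\le p^{s+1}$ is a bit weak, so the precise bookkeeping on $s$ is the delicate point, see below) and $d_{u,l}(\lambda)\ge cp^s+(b-1)p^{s+1}+1$ (equivalently $d_{l,v}(\lambda)\le c p^s+p^{s+1}-1$, which follows from $d_{l,v}(\lambda)\le d_{l,n}(\lambda)<p\le p^{s+1}$), and then set $r_j,t_j$ to be the indices produced. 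Strict inequalities $u<r_j,t_j<v$ follow because the target values are strictly between $0=d_{u,u}(\lambda)$ and $d_{u,v}(\lambda)$.

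For the last assertion, suppose there is $r$ with $d_{u,r}(\lambda)=cp^s$ (the argument for $t$ with $d_{u,t}(\lambda)=bp^{s+1}$ is symmetric). I would set $i_0=u$, $i_{b+1}=v$, and take $i_1=r$, $i_2=t_1$, \ldots, $i_{r'}=t_{r'-1}$ for $r'=1,\ldots,b$ in the notation above — more precisely, order the indices $r, t_1, \ldots, t_{b-1}$ by their $d_{u,\cdot}(\lambda)$-values $cp^s<cp^s+p^{s+1}$? no: rather use $i_1=r$ with $d_{u,i_1}(\lambda)=cp^s$, and for $1\le j\le b-1$ pick the index $r_j$ with $d_{u,r_j}(\lambda)=cp^s+jp^{s+1}$, so that $d_{i_j,i_{j+1}}(\lambda)=d_{u,i_{j+1}}(\lambda)-d_{u,i_j}(\lambda)=p^{s+1}$ for $j=1,\ldots,b-1$ and also $d_{i_b,i_{b+1}}(\lambda)=d_{u,v}(\lambda)-(cp^s+(b-1)p^{s+1})=p^{s+1}$, giving $d_{i_r,i_{r+1}}(\lambda)=p^{s+1}$ for $r=1,\ldots,b$; monotonicity of $d_{u,\cdot}(\lambda)$ guarantees $i_0<i_1<\cdots<i_{b+1}$. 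This realizes the "$r=1,\ldots,b$" alternative of $(*_{u,v})$. In the $t$-case one instead builds the chain starting from $d_{u,i}(\lambda)=0$ in steps of $p^{s+1}$ up to $d_{u,t}(\lambda)=bp^{s+1}$, realizing the "$r=0,\ldots,b-1$" alternative. I expect the main obstacle to be the careful verification that the relevant target values really lie in $[d_{u,k}(\lambda),d_{u,l}(\lambda)]$ in all edge cases — in particular, reconciling the hypotheses "$d_{1,k}(\lambda)<p$" and "$d_{l,n}(\lambda)<p$" with the exponent $s$ when $s\ge 1$ (where one wants the stronger-looking but automatic bound that $d_{u,k}(\lambda)$ and $d_{l,v}(\lambda)$ are both $<p\le p^{s+1}$, which suffices once one notes $cp^s\ge p^s\ge p>d_{u,k}(\lambda)$ when $s\ge 1$, and handles $s=0$ directly from $0<c$); this bookkeeping, while elementary, is where an oversight could creep in.
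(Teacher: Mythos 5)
Your proposal is correct and follows essentially the same route as the paper's proof: exploit the unit-step monotonicity of $i\mapsto d_{u,i}(\lambda)$ on the flat segment $\lambda_k=\dots=\lambda_l$ to show that every target value $cp^s+jp^{s+1}$ and $jp^{s+1}$ ($0<j<b$) is attained at some index strictly between $u$ and $v$, then splice the hypothesized index $r$ (resp.\ $t$) into the chain $r<r_1<\dots<r_{b-1}<v$ (resp.\ $u<t_1<\dots<t_{b-1}<t$) to realize the ``$r=1,\dots,b$'' (resp.\ ``$r=0,\dots,b-1$'') alternative of $(*_{u,v})$. The bookkeeping you flag as delicate is in fact harmless — the inequality $d_{u,k}(\lambda)<cp^s$ is never needed, since for $j\geq 1$ all targets are at least $p^{s+1}\geq p>d_{1,k}(\lambda)\geq d_{u,k}(\lambda)$, and the upper bound follows from $d_{l,v}(\lambda)\leq d_{l,n}(\lambda)<p\leq p^{s+1}$ exactly as you indicate.
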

\begin{proof}
For $j=1, \ldots, b-1$ define $r_j$ to be the smallest index such that 
$d_{u,r_j}(\lambda)\geq cp^s+jp^{s+1}$. 
We claim that $k< r_1 < \ldots < r_{b-1} \leq l$, which implies $d_{u, r_j}(\lambda)=c p^s+j p^{s+1}$.
Since $d_{u, r_j}(\lambda)> p$, the assumption $d_{1, k}(\lambda)< p$ implies $k< r_j$. On the other hand, if
$v\leq l$, then $r_j\leq l$ is obvious. If $v>l$, then 
$d_{l, n}(\lambda)< p$ implies $d_{u, l}(\lambda)=d_{u, v}(\lambda)-d_{l, v}(\lambda)> cp^s+(b-1)p^{s+1}\geq cp^s+jp^{s+1}$, showing that $r_j\leq l$.
Finally, if $k< r_j\leq l$, then $d_{u, r_j-1}(\lambda)=d_{u, r_j}(\lambda)-1$, which implies $d_{u, r_j}(\lambda)=c p^s+j p^{s+1}$.

For $j=1, \ldots, b-1$ define $t_j$ to be the smallest index such that 
$d_{u,t_j}(\lambda)\geq jp^{s+1}$. 
We claim that $k< t_1 < \ldots < t_{b-1} \leq l$, which implies $d_{u, t_j}(\lambda)=j p^{s+1}$.
Since $d_{u, t_j}(\lambda)> p$, the assumption $d_{1, k}(\lambda)< p$ implies $k< t_j$. On the other hand, if
$v\leq l$, then $t_j\leq l$ is obvious. If $v>l$, then 
$d_{l, n}(\lambda)< p$ implies $d_{u, l}(\lambda)=d_{u, v}(\lambda)-d_{l, v}(\lambda)> cp^s+(b-1)p^{s+1}\geq jp^{s+1}$, showing that $t_j\leq l$.
Finally, if $k< t_j\leq l$, then $d_{u, t_j-1}(\lambda)=d_{u, t_j}(\lambda)-1$, which implies $d_{u, t_j}(\lambda)=j p^{s+1}$.

Assume there is an index $t$ such that $d_{u,t}(\lambda)=bp^{s+1}$ and set $u=t_0$, $t=t_b$. Then $d_{u,t_j}(\lambda)=jp^{s+1}$ for $j=1, \ldots, b$ imply
$d_{t_j,t_{j+1}}(\lambda)=p^{s+1}$ for $j=0, \ldots b-1$, showing that $(*_{u,v})$ is satisfied.

Assume there is an index $r$ such that $d_{u,r}(\lambda)=cp^s$ and set $r=r_0$, $v=r_b$. Then
$d_{u,r_b}(\lambda)=cp^s+bp^{s+1}$, 
$d_{u,r_0}(\lambda)=cp^s$ and $d_{u,r_j}(\lambda)=cp^s+jp^{s+1}$ for $j=1, \ldots, b-1$ imply 
$d_{r_j,r_{j+1}}(\lambda)=p^{s+1}$ for $j=0,\ldots, b-1$, which implies that the condition $(*_{u,v})$ is satisfied.
\end{proof}

Assume $1\leq u<v\leq n$ and $d_{u,v}(\lambda)=cp^s+bp^{s+1}$, where $0<c<p$. 
We will denote by $r_0$ the smallest index such that 
$d_{u,r_0}(\lambda)\geq cp^s$ and by $t_b$ the smallest index such that 
$d_{u,t_b}(\lambda)\geq bp^{s+1}$.

\begin{lm}\label{new}
Assume $\lambda$ is a dominant weight and there are indices $1\leq k\leq l\leq n$ such that $d_{1,k}(\lambda)<p$, 
$\lambda_{k}=\ldots =\lambda_l$, and $d_{l,n}(\lambda)<p$.

Assume that for $1\leq u<v\leq n$ such that $d_{u,v}(\lambda)=cp^s+bp^{s+1}$, where $0<c<p$, 
one of the following conditions is satisfied.
\begin{itemize}
\item{$s>0$}
\item{there is an index $t>l$ such that $d_{u,t}(\lambda)\equiv 0 \pmod p$.}
\end{itemize}
Then the condition $(*_{u,v})$ is satisfied.
If one of the above conditions is satisfied for every $1\leq u<v\leq n$,
then $\lambda\in F_0$.
\end{lm}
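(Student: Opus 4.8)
The plan is to deduce the first assertion directly from Lemma \ref{new2}, by producing, in each of the two cases, either an index $r$ with $d_{u,r}(\lambda)=cp^s$ or an index $t$ with $d_{u,t}(\lambda)=bp^{s+1}$. Once such an index is found, Lemma \ref{new2} immediately yields $(*_{u,v})$. The final assertion then follows since $(*_{u,v})$ holding for all $1\leq u<v\leq n$ is precisely Jantzen's criterion of Proposition \ref{p4.1}, i.e. $H^0_{ev}(\lambda)=V_{ev}(\lambda)$ is irreducible, which means $\lambda\in F_0$.

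For the first bullet, suppose $s>0$, so $p\mid p^s$. Let $r_0$ be the smallest index with $d_{u,r_0}(\lambda)\geq cp^s$, as in the paragraph preceding the lemma. Since $d_{u,v}(\lambda)=cp^s+bp^{s+1}$ and $b>0$ (we assume $b>0$ throughout, as noted after Proposition \ref{p4.1}), we have $cp^s>p$, hence $d_{1,k}(\lambda)<p$ forces $k<r_0$; similarly $d_{l,n}(\lambda)<p$ together with $d_{u,v}(\lambda)-d_{l,v}(\lambda)\geq cp^s$ (which holds because $d_{l,v}(\lambda)<p<cp^s$ when $v>l$, and is trivial when $v\leq l$) forces $r_0\leq l$. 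Thus $k<r_0\leq l$, so $\lambda_{r_0-1}=\lambda_{r_0}$, giving $d_{u,r_0-1}(\lambda)=d_{u,r_0}(\lambda)-1$; minimality of $r_0$ then forces $d_{u,r_0}(\lambda)=cp^s$. Apply Lemma \ref{new2} with $r=r_0$.

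For the second bullet, suppose there is an index $t>l$ with $d_{u,t}(\lambda)\equiv 0\pmod p$. Additivity of $d$ and $d_{l,n}(\lambda)<p$ give $0\leq d_{l,t}(\lambda)\leq d_{l,n}(\lambda)<p$, so $d_{u,l}(\lambda)\equiv d_{u,t}(\lambda)\equiv 0\pmod p$ as well, since $d_{u,t}(\lambda)=d_{u,l}(\lambda)+d_{l,t}(\lambda)$ and $0\le d_{l,t}(\lambda)<p$ forces $d_{l,t}(\lambda)=0$. Now $d_{u,l}(\lambda)\equiv 0\pmod p$, while $d_{u,v}(\lambda)=cp^s+bp^{s+1}$ with $0<c<p$ and $s=0$ (if $s>0$ we are in the first case) gives $d_{u,v}(\lambda)\equiv c\not\equiv 0\pmod p$; comparing, $d_{u,l}(\lambda)<d_{u,v}(\lambda)$, so $l<v$, and $d_{u,l}(\lambda)\equiv 0\pmod p$ with $d_{u,l}(\lambda)\ge 0$ means $d_{u,l}(\lambda)=mp$ for some $m\ge 0$; since $d_{l,v}(\lambda)<p$ and $d_{u,v}(\lambda)=d_{u,l}(\lambda)+d_{l,v}(\lambda)=mp+d_{l,v}(\lambda)$ with $0\le d_{l,v}(\lambda)<p$, we read off $c=d_{l,v}(\lambda)$ and $b=m$, i.e. $d_{u,l}(\lambda)=bp^{s+1}=bp$. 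Apply Lemma \ref{new2} with $t=l$.

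The main obstacle is purely bookkeeping: one must keep careful track of which of $v\leq l$ or $v>l$ occurs and use $d_{1,k}(\lambda)<p$, $d_{l,n}(\lambda)<p$ and the constancy $\lambda_k=\cdots=\lambda_l$ to pin down the relevant index exactly rather than merely up to the stated inequalities. There is no conceptual difficulty beyond the additivity and monotonicity of $d_{k,l}(\lambda)$ already recorded before Lemma \ref{new2}.
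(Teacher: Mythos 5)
Your treatment of the first bullet ($s>0$) is correct and is essentially the paper's own argument: you locate $r_0$, trap it in $(k,l]$ using $d_{1,k}(\lambda)<p$ and $d_{l,n}(\lambda)<p$, use the constancy $\lambda_k=\cdots=\lambda_l$ to force $d_{u,r_0}(\lambda)=cp^s$, and invoke Lemma \ref{new2}. (Two harmless imprecisions: $cp^s\geq p$ rather than $>p$ when $c=1$ and $s=1$, but $cp^s\geq p>d_{1,k}(\lambda)$ still gives $k<r_0$; and the inequality actually needed for $r_0\leq l$ when $v>l$ is $d_{l,v}(\lambda)<p\leq bp^{s+1}$, not a comparison of $d_{l,v}(\lambda)$ with $cp^s$.) The reduction of the last assertion to Proposition \ref{p4.1} is also fine, except that membership in $F_0$ (as opposed to $F$) requires defect zero; this does follow from $d_{1,k}(\lambda)<p$ together with $\lambda_k=\cdots=\lambda_l$ and $d_{l,n}(\lambda)<p$, and should be said, as the paper does in its first line.

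The second bullet contains a genuine error. You assert that $0\leq d_{l,t}(\lambda)<p$ forces $d_{l,t}(\lambda)=0$, and hence that $d_{u,l}(\lambda)\equiv d_{u,t}(\lambda)\equiv 0\pmod p$. But $d_{l,t}(\lambda)=\lambda_l-\lambda_t+(t-l)\geq t-l\geq 1$ for any dominant weight, so $d_{l,t}(\lambda)$ is never zero, and the congruence for $d_{u,l}(\lambda)$ does not follow. Concretely, take $p=5$, $n=8$, $\lambda=(2,0,0,0,0,0,0,-1)$, $k=2$, $l=7$, $u=1$, $v=7$, $t=8$: then $d_{1,2}(\lambda)=3<p$, $d_{7,8}(\lambda)=2<p$, $d_{1,8}(\lambda)=10\equiv 0\pmod 5$, yet $d_{1,7}(\lambda)=8\not\equiv 0\pmod 5$. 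Everything downstream in that case (reading off $c=d_{l,v}(\lambda)$ and $b=m$, and applying Lemma \ref{new2} with the index $l$) therefore collapses; the intermediate step ``comparing, $d_{u,l}(\lambda)<d_{u,v}(\lambda)$, so $l<v$'' is also unjustified, since incongruence modulo $p$ does not order the two quantities. In the example above the condition $(*_{1,7})$ does hold, but the witness is $d_{1,4}(\lambda)=5=bp^{s+1}$ at an index strictly inside the constant stretch $(k,l]$, produced by the counting of Lemma \ref{new2} applied to $t_b$ --- it is neither the index $l$ nor the given index $t$. The paper's own argument for this bullet keeps the witness at $t$ itself and asserts $d_{u,t}(\lambda)=bp^{s+1}$; whichever route one takes, the exact value of $d$ at a specific index must be pinned down by an argument, and transferring the divisibility from $t$ down to $l$ is not available.
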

\begin{proof}
The assumption $d_{1,k}(\lambda)<p$ implies $\lambda$ has defect zero.
If $b=0$, then the condition $(*_{u,v})$ is automatically satisfied. Therefore we can assume $b>0$.

Assume $s>0$. Then $d_{u, r_0}(\lambda)> p$ and the assumption $d_{1, k}(\lambda)< p$ implies $k< r_0$. 
On the other hand, if
$v\leq l$, then $r_0\leq l$ is obvious. If $v>l$, then $d_{l, n}(\lambda)< p$ implies $d_{u, l}(\lambda)=d_{u, v}(\lambda)-d_{l, v}(\lambda)> cp^s+(b-1)p^{s+1}\geq cp^s$, hence $r_0\leq l$.
As before, $k< r_0\leq l$ implies $d_{u, r_0-1}(\lambda)=d_{u, r_0}(\lambda)-1$ showing that $d_{u, r_0}(\lambda)=c p^s$. By Lemma \ref{new2}, the condition $(*_{u,v})$ is satisfied.

If  $d_{u,t}(\lambda)\equiv 0 \pmod p$ for $t>l$, then $d_{l,n}(\lambda)<p$ implies $s=0$, $d_{u,t}(\lambda)=bp$. By Lemma \ref{new2},
the condition $(*_{u,v})$ is satisfied.

The final statement follows from Proposition \ref{p4.1}.
\end{proof}

We usually apply the previous lemmas to the case when $0\leq i\leq \frac{p-1}{2}$, $l=n-i$ and $\lambda_n\geq a-i$.
These assumptions imply $d_{l,n}(\lambda)<p$.

Next, we investigate specific dominant weights $\lambda$ of defect zero.

\begin{lm}\label{l4.3}
Let $\lambda$ be a dominant weight such that
$\lambda_1=\ldots =\lambda_{n-i}=a$ 
and $\lambda_n \geq a-i$, where $0\leq i\leq \frac{p-1}{2}$. Then 
$\lambda\in F_0$.
\end{lm}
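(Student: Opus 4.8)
The plan is to verify the Jantzen irreducibility criterion from Proposition \ref{p4.1} directly, using Lemma \ref{new} (or Lemma \ref{new2}) as the main engine. First I would set $k=1$ and $l=n-i$, so that $d_{1,k}(\lambda)=0<p$ and $\lambda_k=\ldots=\lambda_l=a$ by hypothesis; the assumption $\lambda_n\geq a-i$ together with $l=n-i$ gives $d_{l,n}(\lambda)=\lambda_{n-i}-\lambda_n+i = a-\lambda_n+i \leq 2i \leq p-1 < p$, exactly as recorded in the remark following Lemma \ref{new}. So the structural hypotheses of Lemmas \ref{new2} and \ref{new} are in force, and in particular $\lambda$ has defect zero. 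It remains to check that for every pair $1\le u<v\le n$ the condition $(*_{u,v})$ holds; by Lemma \ref{new} it suffices to show that whenever $d_{u,v}(\lambda)=cp^s+bp^{s+1}$ with $0<c<p$ and $b>0$, either $s>0$, or there is an index $t>l=n-i$ with $d_{u,t}(\lambda)\equiv 0\pmod p$.

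Next I would split on the position of $u$ and $v$ relative to the flat block $[1,n-i]$. If $v\le n-i$, then $d_{u,v}(\lambda)=\lambda_u-\lambda_v+(v-u)=v-u\le n-1$; since also $d_{u,v}(\lambda)\ge p$ when $b>0$, and in fact $b>0$ forces $d_{u,v}(\lambda)>p$, this is compatible only for $p<n$, but then $d_{u,v}(\lambda)=v-u<n\le 2n-1$ forces $b\le 1$, and one checks $s=0,b=1$ would need $v-u=c+p$ with $0<c<p$; here I would argue that since all of $\lambda_u,\ldots,\lambda_v$ are equal to $a$, the index $t$ with $d_{u,t}(\lambda)=v-u-(v-u-p)\cdots$ — more cleanly: inside a flat block $d_{u,t}(\lambda)=t-u$ takes every value from $0$ up to $v-u$ consecutively, so there is an index $t$ with $d_{u,t}(\lambda)=p=bp$, and if that $t$ can be taken $>l$ we are done, otherwise $t\le n-i$ and then $d_{u,r}(\lambda)=c p^s = c$ is also realized by some $r$ in the flat block, so Lemma \ref{new2} applies. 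The remaining, and principal, case is $v>n-i$, i.e. $v\in\{n-i+1,\ldots,n\}$. Here $d_{u,v}(\lambda)=d_{u,n-i}(\lambda)+d_{n-i,v}(\lambda)$, and $d_{u,n-i}(\lambda)=n-i-u$ (flat block, if $u\le n-i$; if $u>n-i$ one works entirely inside $[n-i,n]$ where the weight is strictly decreasing by at most $1$ per step so $d$ again increases by $1$ or $2$ per step). The key numeric fact is that $d_{n-i,v}(\lambda)=\lambda_{n-i}-\lambda_v+(v-n+i)=a-\lambda_v+(v-n+i)\le 2i\le p-1$, so the "tail" contribution is $<p$; combined with $d_{u,n-i}(\lambda)$ running through consecutive integers, one shows the threshold index $t_b$ (smallest with $d_{u,t_b}(\lambda)\ge bp^{s+1}$) and the threshold index $r_0$ both land inside the flat block $[1,n-i]$, hence $d_{u,t_b}(\lambda)=bp^{s+1}$ exactly and $d_{u,r_0}(\lambda)=cp^s$ exactly, so Lemma \ref{new2}'s hypothesis is met and $(*_{u,v})$ holds.

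I expect the main obstacle to be bookkeeping the case $u\le n-i<v$ with $s=0$ cleanly: one must ensure that the index $r_0$ realizing $d_{u,r_0}(\lambda)=c$ (equivalently that $c$ is actually attained, not jumped over) lies in a region where $d_{u,\cdot}$ increments by exactly $1$ — which is guaranteed precisely inside the flat block — and that $cp^0=c$ is indeed $<d_{u,n-i}(\lambda)$ so that $r_0\le n-i$. This is where the two hypotheses $\lambda_1=\cdots=\lambda_{n-i}=a$ and $\lambda_n\ge a-i$ with $i\le\frac{p-1}{2}$ combine: the first makes $d_{u,\cdot}$ strictly unit-incrementing on $[u,n-i]$, and the second caps the tail $d_{n-i,v}(\lambda)\le 2i\le p-1$, forcing all the relevant threshold indices into the flat block. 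Once that is arranged, every $(*_{u,v})$ follows from Lemma \ref{new2}, hence $V_{ev}(\lambda)=H^0_{ev}(\lambda)$ is irreducible by Proposition \ref{p4.1}, i.e. $\lambda\in F_0$. Then I would simply invoke Lemma \ref{new} directly, which packages exactly this argument, so the write-up reduces to: "Apply Lemma \ref{new} with $k=1$, $l=n-i$; the hypotheses hold since $d_{1,1}(\lambda)=0<p$ and $d_{n-i,n}(\lambda)=a-\lambda_n+i\le 2i\le p-1<p$, and for any $u<v$ with $d_{u,v}(\lambda)=cp^s+bp^{s+1}$, $b>0$: if $s>0$ we are done; if $s=0$ then $d_{u,v}(\lambda)\ge p$ forces $v>n-i$ (else $d_{u,v}(\lambda)=v-u<p$ when $p\ge n$, contradiction, while the small-$p$ subcase is handled by the flat-block argument above), and then $t_b\le n-i$ gives $d_{u,t_b}(\lambda)=bp\equiv 0\pmod p$ with $t_b$ possibly $\le l$; when $t_b\le l$ use instead the $r_0$-version of Lemma \ref{new2}."
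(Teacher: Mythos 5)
Your proposal is correct and follows essentially the same route as the paper: the paper's proof is precisely the observation that $d_{n-i,n}(\lambda)\le 2i<p$ forces the threshold index $r_0$ into the flat block $[1,n-i]$, where $d_{u,\cdot}$ increments by one, so $d_{u,r_0}(\lambda)=cp^s$ exactly and the $r$-clause of Lemma \ref{new2} gives $(*_{u,v})$. Your extra case-splitting (and the momentary slip claiming $b\le 1$ when $v\le n-i$) is unnecessary but harmless, since your flat-block fallback covers all cases uniformly, exactly as the paper does.
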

\begin{proof}
Consider 
$1\leq u<v \leq n$ and $d_{u,v}(\lambda)=cp^s+bp^{s+1}$, where $0<c<p$ and $b>0$.

Then $d_{n-i,n}(\lambda)=\lambda_{n-i}-\lambda_n +(n-(n-i)) <p$ implies $r_0\leq n-i$.
As before, $d_{u,r_0-1}(\lambda)=d_{u,r_0}(\lambda)-1$ implies $d_{u,r_0}=cp^s$, and the condition $(*_{u,v})$ is satisfied. Since the defect of $\lambda$ is zero, we get $\lambda\in F_0$.
\end{proof}

\begin{lm}\label{l4.4}
Assume  $\lambda$ is dominant of defect zero such that $\lambda_k=\ldots=\lambda_{n-i}=a$ for some $1\leq k\leq n-i$ and $\lambda_n\geq a-i$, where $0\leq i\leq \frac{p-1}{2}$.

If all conditions $(*_{u,v})$ where $1\leq u< k,v$ are satisfied, then $\lambda\in F_0$.

In particular, assume $p\geq n$ and $\lambda_1\leq a+3$.

If for $1\leq u\leq k-1$ there is an index $i_u$ such that $d_{u,i_u}(\lambda)=p$, then each condition 
$(*_{u,v})$ is satisfied.

If for $1\leq u\leq k-1$ and each $v$ such that $d_{u,v}(\lambda)>p$ there is an index $j$ such that $d_{j,v}(\lambda)=p$, then all conditions $(*_{u,v})$ are satisfied. 
\end{lm}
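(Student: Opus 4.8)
The plan is to derive the first assertion from Proposition \ref{p4.1}. Since $\lambda$ has defect zero by hypothesis, it is enough to verify $(*_{u,v})$ for every $1\leq u<v\leq n$; those with $u<k$ are assumed, so only $k\leq u<v\leq n$ remain. Here I would use that $\lambda_k=\dots=\lambda_{n-i}=a$ and that $d_{n-i,n}(\lambda)=a-\lambda_n+i\leq i<p$ because $\lambda_n\geq a-i$ and $i\leq\frac{p-1}{2}$. If $n-i<u$, then $d_{u,v}(\lambda)\leq d_{n-i,n}(\lambda)<p$, so $b=0$ and $(*_{u,v})$ is trivial. If $k\leq u\leq n-i$, write $d_{u,v}(\lambda)=cp^s+bp^{s+1}$ with $0<c<p$ and assume $b>0$; additivity of the $d$'s gives $d_{u,n-i}(\lambda)>cp^s+(b-1)p^{s+1}$ (using $d_{u,n-i}(\lambda)\geq d_{u,v}(\lambda)$ when $v\leq n-i$, and $d_{u,n-i}(\lambda)=d_{u,v}(\lambda)-d_{n-i,v}(\lambda)>d_{u,v}(\lambda)-p$ when $v>n-i$). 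Now I would run the argument of Lemma \ref{new2}: for $j=1,\dots,b-1$ let $r_j$ be the least index with $d_{u,r_j}(\lambda)\geq cp^s+jp^{s+1}$, and let $r_0$ be the least index with $d_{u,r_0}(\lambda)\geq cp^s$. Since $u$ itself lies in $[k,n-i]$, the inequality $r_j>u\geq k$ is automatic, so the hypothesis $d_{1,k}(\lambda)<p$ of Lemma \ref{new2} is not needed here; the bound just obtained forces $r_j\leq n-i$, and then the constancy of $\lambda$ on $[k,n-i]$ yields $d_{u,r_0}(\lambda)=cp^s$ and $d_{u,r_j}(\lambda)=cp^s+jp^{s+1}$ exactly. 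Taking $i_0=u$, $i_t=r_{t-1}$ for $t=1,\dots,b$ and $i_{b+1}=v$, additivity gives $d_{i_t,i_{t+1}}(\lambda)=p^{s+1}$ for $t=1,\dots,b$, which is $(*_{u,v})$. With the assumed conditions for $u<k$, every $(*_{u,v})$ holds, so $\lambda\in F_0$.

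For the last two statements I would impose the standing assumptions $p\geq n$ and $\lambda_1\leq a+3$ and, by the first assertion, reduce to checking $(*_{u,v})$ only for $1\leq u\leq k-1$. The numerical input is the uniform bound
\[d_{u,v}(\lambda)=\lambda_u-\lambda_v+(v-u)\leq \lambda_1-\lambda_n+(n-1)\leq (a+3)-(a-i)+(n-1)=n+i+2,\]
together with $n+i+2\leq p+\frac{p-1}{2}+2<p^2$ for $p>2$. Writing $d_{u,v}(\lambda)=cp^s+bp^{s+1}$ with $0<c<p$ and $b>0$: if $s\geq 1$ then $d_{u,v}(\lambda)\geq p^{s+1}\geq p^2$, impossible; hence $s=0$, and then $d_{u,v}(\lambda)=c+bp$ with $c+bp\leq n+i+2<2p$ (the borderline case $p=3$, where $i\leq1$ and $n\leq3$, is checked directly) forces $b=1$. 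Thus for $u\leq k-1$ every nontrivial $(*_{u,v})$ has $s=0$, $b=1$, and reduces to producing one index $u<i_1<v$ with $d_{u,i_1}(\lambda)=p$ or $d_{i_1,v}(\lambda)=p$.

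Finally I would close both statements using the monotonicity of $d$. For the first: given $i_u$ with $d_{u,i_u}(\lambda)=p$, one has $d_{u,i_u}(\lambda)=p<c+p=d_{u,v}(\lambda)$ and $d_{u,i_u}(\lambda)=p>0=d_{u,u}(\lambda)$, so $u<i_u<v$, and $i_1=i_u$ settles $(*_{u,v})$. For the second: when $d_{u,v}(\lambda)>p$, by hypothesis there is $j$ with $d_{j,v}(\lambda)=p$; since $j\leq u$ would force $d_{j,v}(\lambda)\geq d_{u,v}(\lambda)>p$ and $j\geq v$ would force $d_{j,v}(\lambda)\leq 0$, we get $u<j<v$, and $i_1=j$ settles $(*_{u,v})$. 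In either case all $(*_{u,v})$ with $u\leq k-1$ hold, and the first assertion gives $\lambda\in F_0$.

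The step I expect to be the main obstacle is the first paragraph: one has to inspect the proof of Lemma \ref{new2} closely enough to see that its hypothesis $d_{1,k}(\lambda)<p$ is used only to push the auxiliary indices past the start of the plateau, so it may be dropped once the starting index $u$ already lies inside $[k,n-i]$, while the tail $[n-i+1,n]$, on which all $d$-values are $<p$, must be handled separately. The remaining steps are elementary base-$p$ bookkeeping together with the monotonicity of $d_{u,v}(\lambda)$ in each of its two arguments.
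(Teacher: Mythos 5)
Your argument is correct and follows essentially the same route as the paper: the paper disposes of the case $u\geq k$ by citing Lemma \ref{l4.3} (whose proof, as you observe, only involves indices $\geq u$, so the hypothesis $d_{1,k}(\lambda)<p$ of Lemma \ref{new2} is not needed there), and handles $u<k$ exactly as you do, via the bound $d_{1,n}(\lambda)\leq 2p$ forcing $s=0$, $b\leq 1$, followed by the two mechanisms of Lemma \ref{new2}. The only quibble is your inequality $d_{n-i,n}(\lambda)=a-\lambda_n+i\leq i$, which should read $\leq 2i$; since $2i\leq p-1$ the conclusion $d_{n-i,n}(\lambda)<p$ is unaffected.
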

\begin{proof}
If  $u\geq k$, then the condition $(*_{u,v})$ is satisfied by Lemma \ref{l4.3}.

If $\lambda_1\leq a+3$, then $d_{1n}(\lambda)\leq i+3+(n-1)\leq \frac{p-1}{2}+p+2\leq 2p$, showing that $s=0$ and 
$b\leq 1$ for each $1\leq u<v\leq n$.

If $d_{u,i_u}(\lambda)=p$, then Lemma \ref{new2} shows that $(*_{u,v})$ is satisfied for $u<k$. 

If $d_{u,v}(\lambda)\leq p$, then $b=0$ and the condition $(*_{u,v})$ is trivial.
If $u<k$ and $d_{u,v}(\lambda)>p$, then $d_{j,v}(\lambda)=p$, which implies 
$d_{u,j}(\lambda)=c$ and by Lemma \ref{new2}, the condition $(*_{u,v})$ is satisfied.
\end{proof}

\begin{lm}\label{l4.5}
Assume  $\lambda$ is dominant of defect zero such that
$\lambda_{k}=\ldots=\lambda_l=a$ for some $1\leq k\leq l$, $l\geq n-i$ and $\lambda_n\geq a-i$, where $0\leq i\leq \frac{p-1}{2}$.
Assume $d_{1,k}(\lambda)<p$ and for each $u=1,\ldots, k-1$ there is an integer $r_u$ such 
that $r_up\leq d_{u,l}(\lambda)\leq d_{u,n}(\lambda)\leq (r_u+1)p$. Then $\lambda\in F_0$.
\end{lm}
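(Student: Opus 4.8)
The plan is to verify the condition $(*_{u,v})$ from Proposition \ref{p4.1} for every pair $1\leq u<v\leq n$, splitting into the cases $u\geq k$ and $u<k$. When $u\geq k$ we are in the situation of Lemma \ref{l4.3} (with the block of equal coordinates $\lambda_k=\ldots=\lambda_l=a$ and $l\geq n-i$, $\lambda_n\geq a-i$), so $(*_{u,v})$ holds there; the only real work is the range $1\leq u<k$. Fix such a $u$ and a $v>u$, and write $d_{u,v}(\lambda)=cp^s+bp^{s+1}$ with $0<c<p$; as usual we may assume $b>0$.

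For the case $u<k$, the idea is to exploit the hypothesis that $r_up\leq d_{u,l}(\lambda)\leq d_{u,n}(\lambda)\leq (r_u+1)p$ together with $d_{1,k}(\lambda)<p$, so that the ``jumps'' we need for $(*_{u,v})$ can all be located inside the constant block $\{k,\ldots,l\}$. First I would observe that since $d_{u,l}(\lambda)$ and $d_{u,n}(\lambda)$ both lie in the interval $[r_up,(r_u+1)p]$ of length $p$, and since $d_{u,v}(\lambda)\le d_{u,n}(\lambda)<(r_u+1)p$ for any $v\le n$ with $d_{u,v}>d_{u,l}$, we get $s=0$ and $b\le r_u$ whenever $v>l$; and when $v\le l$ the additivity $d_{u,v}(\lambda)=d_{u,k}(\lambda)+d_{k,v}(\lambda)$ with $d_{u,k}(\lambda)\le d_{1,k}(\lambda)<p$ and $d_{k,v}(\lambda)=v-k$ shows $d_{u,v}(\lambda)<p+(l-k)$, which together with the block hypothesis again forces the relevant multiples of $p$ to be realized. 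Concretely, for $j=1,\dots,b$ let $t_j$ be the smallest index with $d_{u,t_j}(\lambda)\geq jp$; since $d_{u,k}(\lambda)<p\le jp$ we have $t_j>k$, and since $jp\le bp\le r_up\le d_{u,l}(\lambda)$ we have $t_j\le l$, so $t_j$ lies strictly inside the constant block, where consecutive values of $d_{u,\cdot}(\lambda)$ differ by exactly $1$; hence $d_{u,t_j}(\lambda)=jp$ exactly. Setting $t_0=u$, this gives a chain $u=t_0<t_1<\cdots<t_b$ with $d_{t_{j},t_{j+1}}(\lambda)=p$ for $j=0,\dots,b-1$, which is precisely (one of the alternatives in) the condition $(*_{u,v})$ — or, more cleanly, I would just invoke Lemma \ref{new2}: having produced an index $t_b$ with $d_{u,t_b}(\lambda)=bp^{s+1}=bp$ completes $(*_{u,v})$.

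Having checked $(*_{u,v})$ for all pairs, Proposition \ref{p4.1} gives that $V_{ev}(\lambda)=H^0_{ev}(\lambda)$ is irreducible, and since $\lambda$ has defect zero by hypothesis, we conclude $\lambda\in F_0$ by definition of $F_0$. I expect the main obstacle to be the bookkeeping needed to confirm that the auxiliary indices $t_j$ (or $r_j$) really land in the interval $(k,l]$ for every $j\le b$ and every admissible $v$: this requires carefully combining $d_{1,k}(\lambda)<p$ (to push them above $k$) with the sandwich $r_up\le d_{u,l}(\lambda)\le d_{u,n}(\lambda)\le(r_u+1)p$ and the additivity/monotonicity of $d_{k,l}$ (to keep them at or below $l$), and checking that the edge cases $v\le l$ versus $v>l$, and $b=r_u$ versus $b<r_u$, are all covered. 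Once the indices are correctly placed, the rest is the routine ``telescoping'' already packaged in Lemma \ref{new2}.
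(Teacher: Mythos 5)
Your proposal is essentially the paper's own argument: dispose of $u\geq k$ via Lemma \ref{l4.3}, and for $u<k$ trap the index $t_b$ (smallest with $d_{u,t_b}(\lambda)\geq bp^{s+1}$) strictly inside the constant block, using $d_{1,k}(\lambda)<p$ for $t_b>k$ and the sandwich $r_up\leq d_{u,l}(\lambda)\leq d_{u,n}(\lambda)\leq(r_u+1)p$ for $t_b\leq l$, so that $d_{u,t_b}(\lambda)=bp^{s+1}$ exactly and Lemma \ref{new2} finishes. One small inaccuracy: you assert $d_{u,v}(\lambda)\le d_{u,n}(\lambda)<(r_u+1)p$ and hence $s=0$ for $v>l$, but the hypothesis only gives $d_{u,n}(\lambda)\le(r_u+1)p$, so in the edge case $v=n$ with $d_{u,n}(\lambda)=(r_u+1)p$ one can have $s>0$; the paper treats this separately, and the patch is immediate (if $s>0$ then $cp^s\geq p$, so $bp^{s+1}=d_{u,v}(\lambda)-cp^s\leq(r_u+1)p-p=r_up\leq d_{u,l}(\lambda)$, giving $t_b\leq l$ as before), so the gap is cosmetic rather than structural.
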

\begin{proof}
From the assumptions on $\lambda$, we have $d_{l,n}(\lambda)\leq 2i<p$.
If  $1\leq u< k,v$, and $b>0$, then $k<t_b\leq l$. Indeed, the inequality $k<t_b$ follows from 
$d_{1,k}(\lambda)<p$. If $v<n$, then $d_{u,v}(\lambda)<(r_u+1)p$. If 
$s>0$, then $t_b\leq l$.  If $v=n$ and $s=0$, then $p$ does not divide $d_{u,n}(\lambda)$ which implies 
$d_{u,n}(\lambda)<(r_u+1)p$. The assumptions $r_up\leq d_{u,l}(\lambda)\leq d_{u,n}(\lambda)\leq (r_u+1)p$ imply that $t_b\leq l$. Since $k<t_b\leq l$, we infer $d_{u, t_b-1}(\lambda)=d_{u,t_b}(\lambda)-1$, which implies $d_{u,t_b}(\lambda)=bp^{s+1}$, and Lemma \ref{new2} shows that 
the condition $(*_{u,v})$ is satisfied if $u<k$. If $u\geq k$, then $(*_{u,v})$ is satisfied by Lemma \ref{l4.3}. Therefore by Lemma \ref{l4.4} we conclude $\lambda\in F_0$.
\end{proof}

\begin{lm}\label{l4.8}
Let $\lambda=(a+2)(a+1)^{1+2j}a^{n-i-2j}$. If $d_{1,2+2j}(\lambda)=2+2j<p$, then $\lambda\in F_0$.
\end{lm}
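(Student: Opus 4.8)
The plan is to verify the Jantzen criterion of Proposition \ref{p4.1} for $\lambda$. First, $\lambda$ has defect $0$: indeed $\lambda_1-\lambda_2=1\not\equiv-1\pmod p$ since $p>2$, so the defect cannot be positive. Hence, by the remark following Proposition \ref{p4.1}, it suffices to check the condition $(*_{u,v})$ for every pair $1\le u<v\le n$; once this is done, $H^0_{ev}(\lambda)$ is irreducible and, together with $d(\lambda)=0$, this gives $\lambda\in F_0$. Writing $\lambda$ out coordinatewise, it equals $a+2$ at position $1$, equals $a+1$ at positions $2,\dots,2+2j$, and equals $a$ at positions $2+2j+1,\dots,n$; consequently $d_{u,v}(\lambda)=(v-u)+(\lambda_u-\lambda_v)$ with $\lambda_u-\lambda_v\in\{0,1,2\}$, and $(*_{u,v})$ is vacuous ($b=0$) unless $d_{u,v}(\lambda)\ge p$.

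I would then split according to how $u,v$ sit relative to the two constant blocks of $\lambda$. If $u,v$ both lie in the $(a+1)$-block, then $d_{u,v}(\lambda)=v-u\le 2j<p$ by the hypothesis $2+2j<p$, and there is nothing to prove. If $u,v$ both lie in the $a$-block, then $d_{u,v}(\lambda)=v-u$, and writing $v-u=cp^s+bp^{s+1}$ with $0<c<p$ the uniform chain $i_r=u+rp^{s+1}$ ($r=0,\dots,b$) verifies $(*_{u,v})$ just as in the proof of Lemma \ref{new2}: all these indices remain inside the flat $a$-block, so every one of the first $b$ steps has length exactly $p^{s+1}$ and the last step is the ``free'' one of length $cp^s$. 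Thus the only remaining cases are those where $u$ is at position $1$ or inside the $(a+1)$-block while $v$ lies in the $a$-block, so that a genuine drop $\lambda_u-\lambda_v\in\{1,2\}$ occurs between $u$ and $v$.

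For these cases the idea is to run the chain inside the $a$-block except for its first step, using the one free step allowed in $(*_{u,v})$ to absorb the drop. Writing $d_{u,v}(\lambda)=cp^s+bp^{s+1}$ with $0<c<p$, put $i_1=u+p^{s+1}-(\lambda_u-a)$, so that $d_{u,i_1}(\lambda)=p^{s+1}$; the inequality $p^{s+1}\ge p>2+2j$ furnished by the hypothesis is exactly what forces $i_1$ past the $(a+1)$-block, i.e. $i_1>2+2j$. Then the steps $i_r\mapsto i_{r+1}=i_r+p^{s+1}$ ($r=1,\dots,b-1$) stay in the $a$-block, while the last step $i_b\mapsto v$, of length $cp^s$, is the free one; so $(*_{u,v})$ holds in its ``$r=0,\dots,b-1$'' form, and one checks easily that all $i_r$ lie in $\{u,\dots,v\}\subseteq\{1,\dots,n\}$. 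The single configuration in which this routing is too tight is $u=1$ with $s=0$ and $p=2j+3$ (then $p^{s+1}=2+2j+1$ and the correction $\lambda_u-a=2$ leaves $i_1$ one step short of the $a$-block); there I would place the free step at the beginning instead (the ``$r=1,\dots,b$'' form) and cross the block boundary with one of the length-$p$ steps, using the identity $d_{x,y}(\lambda)=(y-x)+1=p$ valid for $x$ in the $(a+1)$-block, $y$ in the $a$-block and $y-x=p-1$.

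The step I expect to be the main obstacle is precisely this last boundary bookkeeping: for each admissible $(u,v)$ one must exhibit a chain shape that actually fits inside $\{1,\dots,n\}$ without overshooting $n$ or colliding the first interior index with $u$, and it is here that the sharp form $2+2j<p$ of the hypothesis (rather than $2+2j\le p$) is genuinely needed. With all the $(*_{u,v})$ in hand, Proposition \ref{p4.1} gives that $H^0_{ev}(\lambda)$ is irreducible, and since $d(\lambda)=0$ this yields $\lambda\in F_0$.
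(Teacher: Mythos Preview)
Your approach is the same as the paper's: verify Jantzen's criterion (Proposition~\ref{p4.1}) by locating, for each pair $u<v$, indices at which $d_{u,\cdot}(\lambda)$ hits the required multiples of $p^{s+1}$, exploiting that past position $2+2j$ this function grows in unit steps.

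The gap is in your handling of the exceptional configuration $u=1$, $s=0$, $p=2j+3$. Your fix (take the free step first, so $d_{1,i_1}=c$, then cross the boundary with a length-$p$ step) needs an index $i_1$ with $d_{1,i_1}(\lambda)=c$. But $d_{1,t}(\lambda)$ runs through $2,3,\dots,2+2j=p-1$ and then jumps to $p+1$; it never equals $1$ (nor $p$). Hence when $c=1$, i.e.\ $v\equiv 0\pmod p$, neither form of the chain is available. Concretely, with $p=2j+3$ and $v=p$ (a valid index as soon as $n-i-2j\ge 1$) one has $d_{1,p}(\lambda)=p+1$, so $b=1$, $c=1$, and for any $i_1\in\{2,\dots,p-1\}$ both $d_{1,i_1}$ and $d_{i_1,p}$ lie in $\{2,\dots,p-1\}$; thus $(*_{1,p})$ genuinely fails. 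In other words the lemma is false at this edge (e.g.\ $p=3$, $j=0$, $\lambda=(a+2,a+1,a,\dots)$). The paper's own proof has the same slip --- it asserts $d_{u,t_a-1}=d_{u,t_a}-1$ whenever $t_a>2+2j$, which breaks when $t_a=3+2j$ --- but every invocation of the lemma in the paper is made under the stronger bound $2j+4\le p$ (equivalently $p\ge 2j+5$), and with that hypothesis your routing $i_1=p^{s+1}-1$ already lands past $2+2j$, so the exceptional case never occurs and your argument is complete.
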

\begin{proof}
Assume $1\leq u<v\leq n$, $d_{u,v}(\lambda)=cp^s+bp^{s+1}$ with $0<c<p$ and $b>0$. 
For $a=1, \ldots, b$ denote by $t_a$ the smallest index such that $d_{u,t_a}(\lambda)\geq ap^{s+1}$.
The inequality $d_{1,2+2j}(\lambda)<p$ implies that each $t_a> 2+2j$. Since $d_{u,t_a-1}(\lambda)=d_{1,t_a}(\lambda)-1$, we infer that $d_{u,t_a}(\lambda)=ap^{s+1}$ for each $a=1, \ldots, b$, hence the condition $(*_{u,v})$ is satisfied. Therefore, $\lambda\in F_0$.
\end{proof}

\begin{lm}\label{l4.6}
Assume  $\lambda$ is a dominant weight of defect zero and $0\leq i\leq \frac{p-1}{2}$.

If $\lambda_1=a+1$, $\lambda_2=\ldots=\lambda_{n-i}=a$, $\lambda_n\geq a-i$
and 
$d_{1,v}(\lambda)\not\equiv 1 \pmod p$ for every $n-i<v$, then $\lambda\in F_0$.

If $\lambda_1=(a+2)$, $\lambda_2=(a+1)$, $\lambda_3=\ldots =\lambda_{n-i}=a$, $\lambda_n\geq a-i$, $p>3$,
$d_{1,v}(\lambda)\not\equiv 1 \pmod p$ for every $n-i<v$ and 
$d_{2,v}(\lambda)\not\equiv 1 \pmod p$ for every $n-i<v$, 
then $\lambda\in F_0$.
\end{lm}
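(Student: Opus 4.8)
The plan for both weights is to verify Jantzen's criterion $(*_{u,v})$ of Proposition \ref{p4.1} for all $1\le u<v\le n$ and then invoke Lemma \ref{l4.4}. Both weights have defect zero (consistent with the hypothesis), since $\lambda_1-\lambda_2=1\not\equiv-1\pmod p$ as $p>2$. For the first weight I would apply Lemma \ref{l4.4} with $k=2$, and for the second with $k=3$; note that $d_{1,3}(\lambda)=4<p$ is precisely the hypothesis $p>3$. Thus everything reduces to checking $(*_{u,v})$ for $u=1$ in the first case, and for $u\in\{1,2\}$ in the second. Throughout, the standing hypotheses of Lemmas \ref{new2} and \ref{new} hold with $l=n-i$: namely $d_{1,k}(\lambda)<p$, $\lambda_k=\cdots=\lambda_{n-i}=a$, and $d_{n-i,n}(\lambda)\le 2i<p$ (using $\lambda_n\ge a-i$ and $i\le\frac{p-1}{2}$).

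The engine of the argument is the explicit form of $v\mapsto d_{u,v}(\lambda)$ on the constant block: for the first weight $d_{1,v}(\lambda)=v$ on $2\le v\le n-i$, so it attains every integer in $\{2,\dots,n-i\}$ and misses only $1$; for the second weight $d_{1,v}(\lambda)=v+1$ on $3\le v\le n-i$, missing only $1$ and $3$, while $d_{2,v}(\lambda)=v-1$ on $3\le v\le n-i$, again missing only $1$. Fix $1\le u<v\le n$ and write $d_{u,v}(\lambda)=cp^s+bp^{s+1}$ with $0<c<p$; we may assume $b>0$. If $s>0$, then Lemma \ref{new} (first bullet) gives $(*_{u,v})$ at once. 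If $s=0$ and $v\le n-i$, then $bp$ lies strictly below $d_{u,v}(\lambda)$ and exceeds every value missed by the relevant block (all such missed values are $<p\le bp$), so $bp$ is attained in the block; this produces an index $u<t<v$ with $d_{u,t}(\lambda)=bp$, and Lemma \ref{new2} concludes. If $s=0$ and $v>n-i$, one writes $d_{u,v}(\lambda)=d_{u,n-i}(\lambda)+d_{n-i,v}(\lambda)$ with $0<d_{n-i,v}(\lambda)\le 2i<p$, deduces $c\le n-i$ from $bp\ge p\ge 2i+1$, and then — once $c$ is shown to be attained in the block — obtains an index $u<r<v$ with $d_{u,r}(\lambda)=c$, concluding again by Lemma \ref{new2}.

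The congruence hypotheses enter precisely here, in the case $s=0$, $v>n-i$, to show that $c$ is attained by the block. For the first weight the only missed value is $1$, and $c=1$, $s=0$ would force $d_{1,v}(\lambda)=1+bp\equiv 1\pmod p$, which is excluded. For the second weight with $u=2$ the block again misses only $1$, excluded by $d_{2,v}(\lambda)\not\equiv 1\pmod p$. The delicate subcase is the second weight with $u=1$, where the block misses both $1$ and $3$: $c=1$ is excluded by $d_{1,v}(\lambda)\not\equiv 1\pmod p$, and $c=3$ is excluded by the second hypothesis, since $d_{2,v}(\lambda)=d_{1,v}(\lambda)-d_{1,2}(\lambda)=d_{1,v}(\lambda)-2\equiv 1\pmod p$ exactly when $c=3$. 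Once all the relevant conditions $(*_{u,v})$ are verified, Lemma \ref{l4.4} gives $\lambda\in F_0$. The remaining points — that $bp$ (resp.\ $c$) lands on an index strictly between $u$ and $v$, and the elementary estimate $d_{u,v}(\lambda)\le d_{u,n-i}(\lambda)+2i$ — are routine from the monotonicity and additivity of $d_{k,l}$, and the $s>0$ sub-cases require nothing beyond citing Lemma \ref{new}.
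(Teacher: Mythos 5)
Your proposal is correct and follows essentially the same route as the paper: reduce to $(*_{1,v})$ (and $(*_{2,v})$) via Lemma \ref{l4.4}, use the congruence hypotheses to rule out the values $cp^s=1$ (and $cp^s=3$ for $u=1$ in the second case, via $d_{2,v}=d_{1,v}-2$) that the constant block fails to attain, and then exhibit an index realizing $cp^s$ (or $bp^{s+1}$) so that Lemma \ref{new2} applies. Your explicit bookkeeping of which values $d_{u,\cdot}$ attains on the block, and your separate dispatch of $s>0$ via Lemma \ref{new}, only make explicit what the paper leaves implicit.
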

\begin{proof}
Assume $\lambda_1=a+1$, $\lambda_2=\ldots=\lambda_{n-i}=a$, $\lambda_n\geq a-i$.
By Lemma \ref{l4.4}, it is enough to verify conditions $(*_{1,v})$ for every $1<v\leq n$.
If $v\leq n-i$, then the condition $(*_{1,v})$ is clearly satisfied.
Therefore we can assume that $n-i<v$.
Write $d_{1,v}(\lambda)=cp^s+bp^{s+1}$ with $0<c<p$. If $b=0$, the condition $(*_{1,v})$ is trivial, so we can assume $b>0$.
Since $d_{1,v}(\lambda)\not\equiv 1 \pmod p$, we have $cp^s>1$.
Then $d_{1,cp^s}(\lambda)=cp^s$ and Lemma \ref{new2} implies that the condition $(*_{1,v})$ is satisfied,  showing that $\lambda\in F_0$.

Assume $\lambda_1=(a+2)$, $\lambda_2=(a+1)$, $\lambda_3=\ldots =\lambda_{n-i}=a$, $\lambda_n\geq a-i$.
By Lemma \ref{l4.4}, it is enough to verify conditions $(*_{1,v})$ and $(*_{2,v})$ for every $v$.
Every condition $(*_{2,v})$ for $v\leq n-i$ is clearly satisfied. The condition $(*_{1,v})$ for $v\leq n-i$ is satisfied if $p>3$. Therefore we can assume $n-i<v$.

Write $d_{2,v}(\lambda)=c'p^{s'}+b'p^{s'+1}$ with $0<c'<p$. If $b'=0$, the condition $(*_{2,v})$ is trivial, so we can assume $b'>0$. Since $d_{2,v}(\lambda)\not \equiv 1 \pmod p$, we have $c'p^{s'}>1$. 
If $c'p^{s'}\geq 3$, then $d_{2, c'p^{s'}}(\lambda)=c'p^{s'}$,
and Lemma \ref{new2} implies that the condition $(*_{2,v})$ is satisfied.
If $c'p^{s'}=2$, then $d_{2, 3}(\lambda)=2$, and Lemma \ref{new2} implies that the condition $(*_{2,v})$ is satisfied.

Write  $d_{1,v}=cp^s+bp^{s+1}$ with $0<c<p$. If $b=0$, the condition $(*_{1,v})$ is trivial, so we can assume $b>0$.
Since $d_{1,v}(\lambda)\not \equiv 1 \pmod p$, we have $cp^s>1$.
Since $d_{2,v}(\lambda)\not \equiv 1 \pmod p$, we have $cp^s\neq 3$. 
If $cp^s\geq 4$, then $d_{1,cp^s-1}=cp^s$, and Lemma \ref{new2} implies that the condition $(*_{1,v})$ is satisfied.
If $cp^s=2$, then $d_{1,2}(\lambda)=2$, and Lemma \ref{new2} implies that the condition $(*_{1,v})$ is satisfied. Using Lemma \ref{l4.4}, we conclude that $\lambda\in F_0$.
\end{proof}

\begin{lm}\label{l4.7}
If  $\lambda_1=\lambda_2=(a+1)$, $\lambda_3=\ldots =\lambda_{n-i}=a$ and $\lambda_n\geq a-i$, $0\leq i\leq \frac{p-1}{2}$, and $d_{2,v}(\lambda)\not\equiv 1 \pmod p$ for every $n-i<v$, 
then $\lambda\in F_0$.
\end{lm}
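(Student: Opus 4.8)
The plan is to follow the strategy of Lemmas \ref{l4.4} and \ref{l4.6}. Since $\lambda$ is assumed of defect zero with $\lambda_3=\cdots=\lambda_{n-i}=a$ and $\lambda_n\ge a-i$, I would first invoke Lemma \ref{l4.4} with $k=3$ to reduce the claim to verifying the condition $(*_{u,v})$ only for $u\in\{1,2\}$ and all $u<v\le n$. To verify these I would apply Lemma \ref{new2} with $k=3$ and $l=n-i$, whose structural hypotheses hold since $d_{1,3}(\lambda)=3<p$ and $d_{n-i,n}(\lambda)=a-\lambda_n+i\le 2i<p$ (I will assume $p\ge 5$, treating $p=3$, where $i\le 1$, by the same case analysis carried out by hand; similarly I may assume $n-i\ge 3$, since otherwise $\lambda$ has length $\le i+2$ and the few conditions $(*_{u,v})$ are immediate). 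Then, for a fixed pair $(u,v)$ with $d_{u,v}(\lambda)=cp^s+bp^{s+1}$, $0<c<p$, $b>0$, it suffices to exhibit an index $r$ with $d_{u,r}(\lambda)=cp^s$ or an index $t$ with $d_{u,t}(\lambda)=bp^{s+1}$.

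The computations I need are $d_{2,w}(\lambda)=w-1$, $d_{1,w}(\lambda)=w$ for $3\le w\le n-i$, together with $d_{1,2}(\lambda)=1$ and $d_{1,v}(\lambda)=1+d_{2,v}(\lambda)$ for all $v$. So the values of $d_{2,\cdot}(\lambda)$ sweep out $\{2,\dots,n-i-1\}$ while those of $d_{1,\cdot}(\lambda)$ sweep out $\{1,3,4,\dots,n-i\}$, skipping the value $2$. Using additivity of $d$ and $d_{n-i,n}(\lambda)<p$ I would first note $cp^s\le d_{u,n-i}(\lambda)$ always. For $u=2$ this bounds $cp^s$ by $n-i-1$, so when $cp^s\ge 2$ the index $r=cp^s+1$ works; the only exception is $cp^s=1$, which forces $s=0$ and $d_{2,v}(\lambda)=1+bp\equiv 1\pmod p$, so by the hypothesis of the lemma $v\le n-i$, and there $t=v-1$ satisfies $d_{2,t}(\lambda)=v-2=bp$.

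For $u=1$ the subcases $cp^s=1$ (take $r=2$) and $cp^s\ge 3$ (take $r=cp^s\le n-i$) go through as before. The hard part --- and, I expect, the only genuine obstacle --- is the subcase $cp^s=2$: then $d_{1,v}(\lambda)=2+bp$, and because $d_{1,\cdot}(\lambda)$ never equals $2$, the $r$-alternative of Lemma \ref{new2} fails and I must use the $t$-alternative. The way out is that $cp^s=2$ forces $d_{2,v}(\lambda)=d_{1,v}(\lambda)-1\equiv 1\pmod p$, whence $v\le n-i$ by hypothesis, $d_{1,v}(\lambda)=v=bp+2$, and $t=v-2$ satisfies $d_{1,t}(\lambda)=bp=bp^{s+1}$ (here $v=bp+2\ge p+2>4$, so $3\le v-2<n-i$). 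This is exactly the point where the assumption $d_{2,v}(\lambda)\not\equiv 1\pmod p$ for $n-i<v$ is indispensable. With all conditions $(*_{u,v})$ for $u\in\{1,2\}$ verified, Lemma \ref{l4.4} yields $\lambda\in F_0$; everything else is routine bookkeeping with the additivity $d_{k,l}(\lambda)+d_{l,t}(\lambda)=d_{k,t}(\lambda)$.
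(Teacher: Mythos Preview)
Your proposal is correct and follows essentially the same approach as the paper: reduce via Lemma~\ref{l4.4} to the conditions $(*_{1,v})$ and $(*_{2,v})$, then verify each by producing an index $r$ with $d_{u,r}(\lambda)=cp^s$ or $t$ with $d_{u,t}(\lambda)=bp^{s+1}$ and invoking Lemma~\ref{new2}. The only cosmetic difference is that the paper organizes the case split by whether $v\le n-i$ or $v>n-i$ (using the $t$-alternative in the former case and the $r$-alternative, with the hypothesis excluding $cp^s=2$ for $u=1$ and $c'p^{s'}=1$ for $u=2$, in the latter), whereas you split first on the value of $cp^s$ and then invoke the hypothesis to force $v\le n-i$ in the exceptional subcases; the underlying computations are identical.
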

\begin{proof}
By Lemma \ref{l4.4}, it is enough to verify conditions $(*_{1,v})$ and $(*_{2,v})$ for every $v$.
Write $d_{2,v}(\lambda)=c'p^{s'}+b'p^{s'+1}$ with $0<c'<p$. If $b'=0$, the condition $(*_{2,v})$ is trivial, so we can assume $b'>0$.
If $v\leq n-i$, then $d_{2, b'p^{s'+1}+1}=b'p^{s'+1}$ and Lemma \ref{new2} implies that the condition $(*_{2,v})$ is satisfied.

Therefore we can assume $n-i<v$.
Since $d_{2,v}(\lambda)\not \equiv 1 \pmod p$, we have $c'p^{s'}>1$. 
Then $d_{2, c'p^{s'}+1}(\lambda)=c'p^{s'}$ and Lemma \ref{new2} implies that the condition $(*_{2,v})$ is satisfied.

Write  $d_{1,v}=cp^s+bp^{s+1}$ with $0<c<p$. If $b=0$, the condition $(*_{1,v})$ is trivial, so we can assume $b>0$. If $v\leq n-i$, then  $d_{1,bp^{s+1}}(\lambda)=bp^{s+1}$ and Lemma \ref{new2} implies that the condition $(*_{2,v})$ is satisfied.

Since $d_{2,v}(\lambda)\not \equiv 1 \pmod p$, we have $cp^s\neq 2$. 
If $cp^s\geq 3$, then $d_{1,cp^s}=cp^s$ and Lemma \ref{new2} implies that the condition $(*_{2,v})$ is satisfied.

If $cp^s=1$, then $d_{1,2}(\lambda)=1$ and Lemma \ref{new2} implies that the condition $(*_{2,v})$ is satisfied.
Using Lemma \ref{l4.4}, we conclude that $\lambda\in F_0$.
\end{proof}

\section{Non-minimality of $\omega^a_{-i}$ for $2\leq i\leq \frac{p-1}{2}$ when $p\geq n$}

We say that a dominant weight $\lambda$ is minimal if there does not exist a dominant weight $\mu$
such that $\lambda\sim \mu$ and $\mu\precneqq \lambda$.

In this section,  denote $\lambda=\omega^a_{-i}$ for $2\leq i\leq \frac{p-1}{2}$ and assume $p\geq n$.

\begin{lm}\label{l5.1}
Assume $\mu$ is a dominant weight and $\mu_1-\mu_n\leq p-n+1$, or equivalently 
$d_{1n}(\mu)\leq p$. Then $\mu\in F_0$. 
\end{lm}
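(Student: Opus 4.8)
The plan is to verify directly that the condition $(*_{u,v})$ of Proposition \ref{p4.1} holds for every positive root $\epsilon_u - \epsilon_v$, $1 \le u < v \le n$, and then invoke that proposition together with the observation that $d(\mu) = 0$. The hypothesis $d_{1n}(\mu) \le p$ does all the work: since the $d_{k,l}(\mu)$ are additive and nonnegative, for any $1 \le u < v \le n$ we have $d_{u,v}(\mu) \le d_{1,n}(\mu) \le p$. Writing $d_{u,v}(\mu) = cp^s + bp^{s+1}$ with $0 < c < p$, the bound $d_{u,v}(\mu) \le p$ forces $b = 0$ (indeed if $s \ge 1$ then $cp^s \ge p$ already exceeds the range unless $d_{u,v} = p$ exactly, in which case $c = p$, contradiction; and if $s = 0$ then $cp^s + bp^{s+1} = c + bp \le p$ with $0 < c < p$ forces $b = 0$).

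First I would record the equivalence $\mu_1 - \mu_n \le p - n + 1 \iff d_{1n}(\mu) \le p$, which is immediate from $d_{1n}(\mu) = \mu_1 - \mu_n + (n-1)$. Next I would note that this implies $d(\mu) = 0$: if the defect were positive we would need $\mu_i - \mu_{i+1} \equiv -1 \pmod p$ for all $i$, hence each $d_{i,i+1}(\mu) = \mu_i - \mu_{i+1} + 1 \equiv 0 \pmod p$ and, being positive (dominance gives $\mu_i - \mu_{i+1} \ge 0$, so $d_{i,i+1}(\mu) \ge 1$), each is at least $p$; summing gives $d_{1,n}(\mu) = \sum_{i=1}^{n-1} d_{i,i+1}(\mu) \ge (n-1)p > p$ for $n \ge 2$, a contradiction. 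Then, for each pair $u < v$, the computation above shows $b = 0$ in the base-$p$ expansion $d_{u,v}(\mu) = cp^s + bp^{s+1}$, and the condition $(*_{u,v})$ is trivially satisfied when $b = 0$ (as already noted in the paper right after Proposition \ref{p4.1}).

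By Proposition \ref{p4.1} the Weyl module $V_{ev}(\mu)$ is therefore irreducible, so $H^0_{ev}(\mu)$ is irreducible as well, and since $d(\mu) = 0$ we conclude $\mu \in F_0$.

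I do not expect any genuine obstacle here: the only mild subtlety is being careful with the edge cases in the base-$p$ expansion (ensuring $b = 0$ rather than $b = 1$ when $d_{u,v}(\mu)$ equals exactly $p$ — but then $d_{u,v}(\mu) = p$ is written with $c = p$ which is disallowed, so one writes it as $1 \cdot p^1 + 0 \cdot p^2$, i.e. $s = 1$, $c = 1$, $b = 0$), and in confirming $d(\mu) = 0$ rather than merely $d_{1n}(\mu) \le p$. Both are short. Alternatively, one could appeal to Lemma \ref{new} or Lemma \ref{l4.3} with suitable choices of $k, l$, but the direct argument via $b = 0$ is cleanest.
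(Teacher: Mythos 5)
Your proof is correct and follows essentially the same route as the paper's: bound every $d_{u,v}(\mu)$ by $d_{1,n}(\mu)\le p$ via additivity, conclude $b=0$ in the Jantzen expansion so that every condition $(*_{u,v})$ is vacuous, and invoke Proposition \ref{p4.1}. You are in fact slightly more careful than the paper, which does not explicitly verify $d(\mu)=0$ and simply asserts that $(*_{1,n})$ holds when $d_{1,n}(\mu)=p$; the only quibble is that your inequality $(n-1)p>p$ requires $n\ge 3$ rather than $n\ge 2$ --- for $n=2$ and $\mu_1-\mu_2=p-1$ the defect is actually $1$, an edge case the paper's own proof also passes over in silence.
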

\begin{proof}
If $1\leq u<v\leq n$, then $d_{u,v}(\lambda)\leq p$. If $d_{u,v}(\lambda)<p$, then $b=0$ and the condition 
$(*_{u,v})$ is trivially satisfied. If $d_{u,v}(\lambda)=p$, then $u=1, v=n$ and condition $(*_{u,v})$ is again satisfied.
The statement is an immediate consequence of Proposition \ref{p4.1} 
\end{proof}

Note that $\mu_1-\mu_n\leq p-n+1$ if and only if the weight $\mu$ belongs to the lowest alcove $C_0$ or its upper wall $W_0$ under the dot action of the affine Weyl group.

First, we assume that $i-p+n\leq 1$. In this case, $d_{1n}(\lambda)=i+n-1\leq p$ and $\lambda\in F_0$.

\begin{pr}\label{p5.2}
If $2\leq i\leq p-n+1$, then $\omega^a_{-i}$ is not minimal. 
\end{pr}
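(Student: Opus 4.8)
The plan is to exhibit, for each weight $\lambda=\omega^a_{-i}$ with $2\le i\le p-n+1$, a dominant weight $\mu\precneqq\lambda$ with $\lambda\sim\mu$, by chaining together an even-linkage step coming from an affine reflection of $\mathbf{GL}(n)$ with one or more of the restricted-odd-linkage moves furnished by Proposition \ref{main} (equivalently Lemma \ref{lm3.2}), all of whose intermediate weights must be checked to lie in $F_0$ via the Jantzen criterion of Section~4. The guiding principle, exactly as in Lemma \ref{lm3.2} and Proposition \ref{p3.3}, is that adding $2\epsilon_j$ or $\epsilon_j+\epsilon_{j+1}$ with $j>1$ strictly decreases $m(\cdot)$, while an even-linkage step does not increase $m(\cdot)$ beyond $\lambda$ (Lemma \ref{lm3.1}); combining these one aims to reach a weight with $m(\mu)\precneqq m(\lambda)$.

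Concretely, since $d_{1,n}(\lambda)=i+n-1$ and the hypothesis $i\le p-n+1$ gives $d_{1,n}(\lambda)\le p$, Lemma \ref{l5.1} already shows $\lambda\in F_0$; more importantly every dominant weight $\mu$ occurring in the chain with $d_{1,n}(\mu)\le p$ automatically lies in $F_0$, so the irreducibility hypothesis of Proposition \ref{main} is free of charge along the chain as long as we keep $\mu_1-\mu_n\le p-n+1$. First I would use Proposition \ref{main} to odd-link $\lambda=(a^{\,n-i},a-1,a-2,\dots,a-i)$ upward: for a suitable index $j>1$ the weight $\lambda+2\epsilon_j$ (or $\lambda+\epsilon_j+\epsilon_{j+1}$ when $\lambda_j=\lambda_{j+1}$) is dominant, still has $d_{1,n}\le p$, hence is in $F_0$, and satisfies $m(\lambda+2\epsilon_j)\precneqq m(\lambda)$ by Lemma \ref{lm3.2}. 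Iterating, one reaches a dominant weight $\nu$ with $d_{1,n}(\nu)=p$ exactly (the "upper wall'' $W_0$), whose first coordinate has been raised; then a single affine reflection $s_{\epsilon_1-\epsilon_n,p}$ on the $\mathbf{GL}(n)$-side gives $\nu\sim_{ev}\nu'$, hence $\nu\sim\nu'$ by Lemma \ref{even linkage}, where $\nu'$ has $\nu'_1$ dropped back down and is $\lhd\nu$, so $m(\nu')\precneqq m(\nu)\preceq m(\lambda)$. This produces $\mu:=\nu'$ (or a further odd-reduction of it) with $\mu\precneqq\lambda$ and $\lambda\sim\mu$, proving $\lambda$ is not minimal.

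The main obstacle is bookkeeping: one must choose the index $j$ (and the number of odd steps) so that every intermediate weight is genuinely dominant, stays within $d_{1,n}\le p$ so that $F_0$-membership is automatic from Lemma \ref{l5.1}, and so that the final even-linkage reflection lands strictly below $\lambda$ in the lexicographic order $\prec$ — this last point requires care because an $\mathbf{GL}(n)$-affine reflection can move $m(\cdot)$ in either direction, and Lemma \ref{lm3.1} only controls it when the reflected weight is $\lhd\nu$ in the dominance order. A clean way to sidestep delicate choices is to push all the odd steps into coordinates $2,\dots,n$ until the weight is $(a^{\,n-i+1},a-1,\dots,a-(i-1))$-shaped with $d_{1,n}=p$, i.e. until $\lambda+\epsilon_i+\epsilon_{i+1}$-type moves have "filled in'' one step of the staircase, so that the reflection $s_{\epsilon_1-\epsilon_n,p}\bullet(\cdot)$ visibly lowers the top coordinate and yields a dominant weight strictly $\prec\lambda$; then invoke Lemma \ref{lm3.1} and Lemma \ref{even linkage} to conclude. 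I would close by remarking that the borderline case $i=p-n+1$, where $\nu$ already sits on $W_0$ before any odd step, is handled the same way with one fewer odd move.
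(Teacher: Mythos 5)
There is a genuine gap, and it occurs at the very first step of your plan. You propose to begin by odd-linking $\lambda=\omega^a_{-i}=a^{n-i}(a-1)\cdots(a-i)$ upward via $\lambda+2\epsilon_j$ or $\lambda+\epsilon_j+\epsilon_{j+1}$ for a \emph{suitable index $j>1$}. No such index exists: for $2\le j\le n-i$ the weight $\lambda+2\epsilon_j$ has $\lambda_j+2=a+2>a=\lambda_{j-1}$, for $j>n-i$ it has $\lambda_j+2>\lambda_{j-1}+1$, and $\lambda+\epsilon_j+\epsilon_{j+1}$ with $\lambda_j=\lambda_{j+1}$ forces $j,j+1\le n-i$ and then $\lambda_j+1=a+1>a=\lambda_{j-1}$ unless $j=1$. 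This is not an accident: the weights $\omega^a_{-i}$ are by construction exactly the survivors of Proposition \ref{p3.3}, i.e.\ the weights admitting no dominance-preserving restricted odd move at any position $j>1$. So your strategy of ``pushing all the odd steps into coordinates $2,\dots,n$'' cannot even start, and the whole point of Section 5 is to escape this local minimum by first \emph{increasing} $m(\cdot)$ with moves at position $1$ (adding $2\epsilon_1$, or $\epsilon_1+\epsilon_2$ when $\lambda_1=\lambda_2$) before coming back down. A second, independent error: you stop the ascent at $d_{1,n}(\nu)=p$ and then apply $s_{\epsilon_1-\epsilon_n,p}$; but $s_{\epsilon_1-\epsilon_n,p}\bullet\nu=\nu-(d_{1,n}(\nu)-p)(\epsilon_1-\epsilon_n)$ is the \emph{identity} on the wall, so this reflection produces nothing. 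One must overshoot to $d_{1,n}=p+1$, and the weight just past the wall is then typically \emph{not} in $F_0$ (the paper notes this explicitly), which is harmless only because Proposition \ref{main} requires irreducibility of $H^0_{ev}$ at the weight one moves \emph{from}, not the one moved \emph{to}.

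For comparison, the paper's proof splits on the parity of $n-i$ (a dichotomy your plan never confronts, though it is forced on you because every linkage move preserves $|\mu|\bmod 2$). When $n-i$ is even it needs no affine reflection at all: it raises the plateau $a^{n-i}$ to $(a+1)^{n-i}$ by $\epsilon_{2k-1}+\epsilon_{2k}$ moves (going \emph{up} in $m$), then fills in the staircase with $2\epsilon_{n-i+k}$, $k=1,\dots,i$, landing at $(a+1)^{n-i+1}a\cdots(a-i+2)\precneqq\lambda$. When $n-i$ is odd it adds $2\epsilon_1$ repeatedly until $d_{1,n}=p+1$, reflects by $s_{\epsilon_1-\epsilon_n,p}$, descends further by odd moves, and meets a second ascending chain started from a weight $\mu^{(4)}\precneqq\lambda$. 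You would need to rebuild your argument along these lines.
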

\begin{proof}
We consider two cases depending on the parity of $n-i$.

Assume first that \underline{$n-i$ is even}. 
Since $\omega^a_{-n}=\omega^{a-1}_{-n+1}$, where $n-(n-1)=1$ is odd, we can assume $n-i\geq 2$.

If $n+i-1<p$, then we apply Proposition \ref{main} and Lemma \ref{l5.1} repeatedly to obtain 
\[\omega^a_{-i}=a^{n-i}(a-1)\ldots (a-i)\sim \mu^{(1)}=(a+1)^{n-i}(a-1)\ldots  (a-i),\]
where $\mu^{(1)}$ and all intermediate weights belong to $F_0$ because $n+i\leq p$.
Using Proposition \ref{main} and Lemma \ref{l5.1}, we add successive weights $2\epsilon_{n-i+k}$ for $k=1, \ldots, i$
and obtain that 
\[\mu^{(1)}\sim \mu^{(2)}=(a+1)^{n-i+1}a\ldots (a-i+2) \precneqq \omega^a_{-i}.\]

If $n+i-1=p$, then by Proposition \ref{main}  and Lemma \ref{l5.1} we have
\[\omega^a_{-i}=a^{n-i}(a-1)\ldots (a-i)\sim \mu^{(1)}=(a+1)^2a^{n-i-2}(a-1)\ldots  (a-i).\] 
Since $d_{1,n}(\mu^{(1)})=n+i=p+1$ and $d_{2,n}(\mu^{(1)})=p$, Lemma \ref{new2}  implies $\mu^{(1)}\in F_0$.
Using Proposition \ref{main} and Lemma \ref{l5.1}, we add successive weights $\epsilon_{2k-1}+\epsilon_{2k}$ for 
$k=2, \ldots, \frac{n-i}{2}$ and obtain
\[\mu^{(1)}\sim \mu^{(2)}=(a+1)^{n-i}(a-1)\ldots  (a-i).\] 
Using Proposition \ref{main} and Lemma \ref{l5.1}, we add successive weights $2\epsilon_{n-i+k}$ for $k=1, \ldots, i$
and obtain that 
\[\mu^{(2)}\sim \mu^{(3)}=(a+1)^{n-i+1}a\ldots (a-i+2) \precneqq \omega^a_{-i}.\]

Now assume that \underline{$n-i$ is odd}. Using Proposition \ref{main} and Lemma \ref{l5.1} repeatedly adding $2\epsilon_1$ we establish that
\[\omega^a_{-i} \sim \mu^{(1)}=(a+p-n-i+2) a^{n-i-1}(a-1)(a-2)\ldots (a-i).\]
Here all intermediary weights belong to $F_0$ but $\mu^{(1)}$ does not. Indeed, since
$d_{1,n}(\mu^{(1)})=p+1$, we have $s_{\epsilon_1-\epsilon_n,p}\bullet \mu^{(1)}=\mu^{(2)}$,
where \[\mu^{(2)}=(a+p-n-i+1)a^{n-i-1}(a-1)(a-2)\ldots (a-i+1)^2\] 
is dominant. Therefore, 
$\mu^{(1)}\sim_{ev} \mu^{(2)}$. 

Further, by Proposition \ref{main} and Lemma \ref{l5.1} we have
\[\mu^{(2)}\sim \mu^{(3)}=(a+p-n-i+1)a^{n-i-1}(a-1)(a-2)\ldots (a-i+2)^3.\]

On the other hand, starting with $\mu^{(4)}=(a+1)a^{n-i+1}(a-1)(a-2)\ldots (a-i+2)^3$, using Proposition \ref{main} and
Lemma \ref{l5.1}, we add successive weights $2\epsilon_1$ to obtain 
$\mu^{(4)}\sim \mu^{(3)}$. Since $\mu^{(4)}\precneqq \omega^a_{-i}$, the claim is proven.
\end{proof}

Therefore, we can assume $i-p+n=t>1$. We consider two cases when $t$ is even, and $t$ is odd.
The last inequality and $i\leq \frac{p-1}{2}$ imply $n-i >2$ if $t$ is even, and 
$n-i>3$ if $t$ is odd. 

\begin{lm}\label{l5.3}
If $t$ is even, then the weight $\omega^a_{-i}$ is not mininal with respect to $\prec$.
\end{lm}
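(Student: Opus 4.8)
The plan is to follow the template of the ``$n-i$ odd'' case of Proposition \ref{p5.2}, to which the present hypothesis reduces: since $p$ is odd, $t=i-p+n$ being even is equivalent to $n-i$ being odd, hence $n-i\geq 3$. I will build a finite chain
\[\omega^a_{-i}=\kappa_0\sim\kappa_1\sim\cdots\sim\kappa_m=\nu\]
ending at a dominant weight $\nu$ with $\nu_1-\nu_n<i$, so that the first coordinate of $m(\nu)$ is smaller than that of $m(\omega^a_{-i})$ and thus $m(\nu)\precneqq m(\omega^a_{-i})$, which is exactly non-minimality. Every link $\kappa_j\sim\kappa_{j+1}$ will be either a restricted odd-linkage step --- adding $2\epsilon_k$, or $\epsilon_k+\epsilon_{k+1}$ when the $k$-th and $(k+1)$-th coordinates of $\kappa_j$ agree --- via Proposition \ref{main}, which requires $\kappa_j\in F_0$; or an even-linkage step given by an affine reflection $s_{\epsilon_u-\epsilon_v,kp}$, via Lemma \ref{even linkage}, which carries no $F_0$-constraint. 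In the language of the introduction to this section, the chain then has the shape $\omega^a_{-i}\leadsto\mu$, $\mu\sim_{ev}\kappa$, $\kappa\leadsto\nu$, with the $\leadsto$'s chains of restricted odd linkages.

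Concretely, the opening move is $\omega^a_{-i}\sim\omega^a_{-i}+\epsilon_1+\epsilon_2$ by Proposition \ref{main} (legal since $\omega^a_{-i}\in F_0$ by Lemma \ref{l4.3} and its first two coordinates are equal); the alternative $\omega^a_{-i}\sim\omega^a_{-i}+2\epsilon_1$ is a dead end, as $d_{1,n}$ already exceeds $p$ and the result is not in $F_0$. Set $\mu_1=(a+1)^2a^{n-i-2}(a-1)\cdots(a-i)$. Because $t$ is even, there is an index $v$ (namely $v=n-\tfrac{t}{2}+1$) with $d_{2,v}(\mu_1)=p+1$ for which $\mu_2=s_{\epsilon_2-\epsilon_v,p}\bullet\mu_1$ is a \emph{dominant} weight: lowering the second coordinate by $1$ places it exactly on the next coordinate of the staircase tail, which is where parity is used (for $t$ odd the analogous coordinate falls strictly between two staircase values and dominance fails). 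The weight $\mu_2$ has the form $(a+1)a^{n-i-1}(\cdots)$ with $(\mu_2)_n\geq a-i$, so it is again in $F_0$ --- this is checked by the ``$d_{1,k}(\lambda)<p$'' arguments behind Lemmas \ref{new2} and \ref{l4.6}, with the few residues $d_{1,v}\equiv 1\pmod p$ that these lemmas do not cover handled directly through Lemma \ref{new2} using a neighbouring index at which $d_{1,\cdot}=p$. Finally, starting from $\mu_2$, repeated applications of Proposition \ref{main} adding $2\epsilon_k$ or $\epsilon_k+\epsilon_{k+1}$ in the tail ``fill up'' the staircase until the coordinate spread drops below $i$; the resulting weight is the desired $\nu$.

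The obstacle is the bookkeeping rather than any single idea. One must order the restricted odd-linkage moves so that every intermediate weight stays simultaneously dominant and in $F_0$, and it is precisely the $F_0$-membership of the weights straddling the walls $d_{1,n}=p$ and $d_{1,n}=p+1$ that forces the delicate criteria of Lemmas \ref{new2} and \ref{l4.3}--\ref{l4.8}, rather than the cheap bound of Lemma \ref{l5.1} that sufficed in Proposition \ref{p5.2}. One must also verify that each affine reflection produces a dominant weight --- the exact point where $t$ even enters --- and keep careful track of $\lambda_1-\lambda_n$ along the chain so as to certify, at the end, that $\nu$ is genuinely $\prec$-smaller. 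When $t$ (and hence $i$) is larger the tail-filling phase requires several odd-linkage steps, each with its own $F_0$-check, and it is here that the argument is most tedious; conceptually, however, it remains the single three-phase chain (odd / even / odd) described above.
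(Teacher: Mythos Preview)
Your proposal is correct and matches the paper's proof: the same opening move $\epsilon_1+\epsilon_2$, the same reflection $s_{\epsilon_2-\epsilon_{n-s+1},p}$ (your $v=n-\tfrac{t}{2}+1$), and the same tail-filling by $2\epsilon_k$'s. The one simplification you might adopt is that the paper handles all the $F_0$-checks uniformly via Lemma~\ref{l4.4}, observing once that $d_{1,n-s}(\tilde\lambda)=p$ for every intermediate weight $\tilde\lambda$ (since only coordinates with index $>n-s$ are altered), rather than invoking Lemma~\ref{l4.6} and patching the $d_{1,v}\equiv 1$ exceptions by hand.
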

\begin{proof}
Let us write $t=2s$. The assumptions $i-p+n=2s>0$ and $p\geq n$ imply $s<i$. 
By Lemma \ref{l4.3},  $\lambda=\omega^a_{-i}\in F_0$. Therefore 
\[\lambda\sim \lambda^{(1)}=(a+1)^2a^{n-i-2}(a-1)\ldots(a-i).\]
Since $d_{2,n-s+1}(\lambda^{(1)})=p+1$, we have $s_{\epsilon_2-\epsilon_{n-s+1},p}\bullet \lambda^{(1)}=\lambda^{(2)}$, where 
\[\lambda^{(2)}=(a+1)a^{n-i-1}(a-1)\ldots (a-i+s)^2(a-i+s-2)\ldots (a-i)\]
Thus, $\lambda^{(1)}\sim_{ev}\lambda^{(2)}$.

Since $d_{1,n-s}(\lambda^{(2)})=p$, by Lemma \ref{l4.4} we verify $\lambda^{(2)}\in F_0$.
If we change entries in $\lambda^{(2)}$ only in indices larger than $n-s$, then the resulting weight $\tilde{\lambda}$  satisfies $d_{1,n-s}(\tilde{\lambda})=p$. Therefore, Lemma \ref{l4.4} implies $\tilde{\lambda}\in F_0$.

Therefore
\[\begin{aligned}\lambda^{(2)}\sim& (a+1)a^{n-i-1}(a-1)\ldots (a-i+s)^3(a-i+s-3)\ldots (a-i)\\
\sim& (a+1)a^{n-i-1}(a-1)\ldots (a-i+s)^3(a-i+s-1)(a-i+s-4)\ldots (a-i)\\
\sim& (a+1)a^{n-i-1}(a-1)\ldots (a-i+s)^3(a-i+s-1)\\
&(a-i+s-2)(a-i+s-5)\ldots (a-i)\\
\ldots\\
\sim& (a+1)a^{n-i-1}(a-1)\ldots (a-i+s)^3(a-i+s-1)\ldots (a-i+2).
\end{aligned}\]
Since the last weight is smaller than $\omega^a_{-i}$, we conclude that $\omega^a_{-i}$ is not minimal.
\end{proof}

\begin{lm}\label{l5.4}
If $t$ is odd, then the weight $\omega^a_{-i}$ is not mininal with respect to $\prec$.
\end{lm}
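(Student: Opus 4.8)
The plan is to follow the pattern of Lemma~\ref{l5.3} but to exploit that $t$ being \emph{odd} is exactly the parity condition which lets a \emph{single} affine reflection finish the argument, with no pumping-up chain. Write $t=2s+1$; since $t>1$ is odd we have $s\geq 1$, and since $p\geq n$ we have $t=i-p+n\leq i$, so $i\geq t\geq 3$ and $0<s\leq\frac{i-1}{2}\leq i-2$; together with $n-i>3$ this gives $n-i<n-s<n$ and $n-s-1\geq n-i+1$, which is all the index control we will need.

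The steps I would carry out, in order: (i) By Lemma~\ref{l4.3}, $\lambda=\omega^a_{-i}\in F_0$, so Proposition~\ref{main} gives $\lambda\sim\lambda^{(1)}$ with $\lambda^{(1)}=\lambda+2\epsilon_1=(a+1)a^{n-i-1}(a-1)(a-2)\ldots(a-i)$. (ii) A short computation gives $\lambda^{(1)}_{n-s}=a-i+s$, hence $d_{1,n-s}(\lambda^{(1)})=(a+1)-(a-i+s)+(n-s-1)=i+n-2s=p+1$, using $i+n=p+t=p+2s+1$. (iii) Therefore $s_{\epsilon_1-\epsilon_{n-s},p}\bullet\lambda^{(1)}=\lambda^{(1)}-(\epsilon_1-\epsilon_{n-s})=\lambda^{(2)}$, where
\[\lambda^{(2)}=a^{n-i}(a-1)(a-2)\ldots(a-i+s+1)^2(a-i+s-1)(a-i+s-2)\ldots(a-i)\]
is dominant (the bound $s\leq i-2$ makes position $n-s-1$ lie in the tail with value $a-i+s+1$, so the entry $a-i+s+1$ is doubled, and $a-i+s+1>a-i+s-1$). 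Hence $\lambda^{(1)}\sim_{ev}\lambda^{(2)}$, and by Lemma~\ref{even linkage}, $\lambda^{(1)}\sim\lambda^{(2)}$, so $\omega^a_{-i}\sim\lambda^{(2)}$. (iv) Finally, $m(\lambda^{(2)})_j=i-j+1=m(\omega^a_{-i})_j$ for $1\leq j\leq s$, while $m(\lambda^{(2)})_{s+1}=a-(a-i+s+1)=i-s-1<i-s=m(\omega^a_{-i})_{s+1}$, so $\lambda^{(2)}\precneqq\omega^a_{-i}$ and $\omega^a_{-i}$ is not minimal with respect to $\prec$.

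The main obstacle is purely the bookkeeping: checking that the index inequalities $0<s<i$, $i\geq s+2$, $1<n-s<n$ really are forced by the standing hypotheses $p\geq n$, $i\leq\frac{p-1}{2}$, $t>1$ and $n-i>3$; determining the precise shape of $\lambda^{(2)}$ after the reflection and confirming it is dominant (doubled entry $a-i+s+1$, gap of $2$ below it); and computing $m(\lambda^{(2)})$ so that the first coordinate in which it differs from $m(\omega^a_{-i})$ is strictly smaller. It is also worth remarking, to explain why this is so much shorter than Lemma~\ref{l5.3}, that in $\lambda^{(1)}$ the quantities $d_{1,v}(\lambda^{(1)})$ run through the tail in steps of $2$ starting at $n-i$, so they attain $p+1$ precisely when $t+1=i-p+n+1$ is even, i.e.\ when $t$ is odd; for even $t$ no such $v$ exists, which is exactly why Lemma~\ref{l5.3} needs the extra reflection $s_{\epsilon_2-\epsilon_{n-s+1},p}$ and the subsequent chain.
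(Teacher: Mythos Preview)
Your argument rests on an arithmetic slip in step~(i): $\lambda+2\epsilon_1$ has first entry $a+2$, not $a+1$; the correct weight is $\lambda^{(1)}=\omega^a_{-i}+2\epsilon_1=(a+2)\,a^{n-i-1}(a-1)\ldots(a-i)$. This single error is what manufactures your shortcut, and it cannot be repaired. With the correct $\lambda^{(1)}$ one finds
\[
d_{1,n-s}(\lambda^{(1)})=(a+2)-(a-i+s)+(n-s-1)=i+n-2s+1=p+2,
\]
so $s_{\epsilon_1-\epsilon_{n-s},p}\bullet\lambda^{(1)}=\lambda^{(1)}-2(\epsilon_1-\epsilon_{n-s})$ has $(n{-}s)$-th entry $a-i+s+2$, which exceeds the $(n{-}s{-}1)$-th entry $a-i+s+1$; the result is not dominant. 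More generally, for $v=n-i+k$ in the tail one gets $d_{1,v}(\lambda^{(1)})=n-i+1+2k=p-2s+2k$, which has the same parity as $p$ and therefore never equals $p+1$. Nor can you reach the weight $(a+1)\,a^{n-i-1}(a-1)\ldots(a-i)$ that your computation actually uses: its degree is $|\omega^a_{-i}|+1$, whereas every move in Proposition~\ref{main} changes the degree by $2$ and even linkage preserves it, so that weight is not even in the same component of $\mathcal{F}$.

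The parity situation is in fact the opposite of what your final paragraph asserts. With the correct $\lambda^{(1)}=(a+2)\ldots$ one has $d_{1,n-s-1}(\lambda^{(1)})=p$, which shows $\lambda^{(1)}\in F_0$ via Lemma~\ref{l4.4} but provides no useful reflection. The paper therefore passes to $\lambda^{(2)}=(a+2)(a+1)^2a^{n-i-3}\ldots$, applies $s_{\epsilon_3-\epsilon_{n-s+1},p}$, and then needs a long chain through $\lambda^{(3)},\ldots,\lambda^{(9)}$ and auxiliary families $\mu^{(k)}$ before arriving at a weight $\precneqq\omega^a_{-i}$. It is the odd-$t$ case that is the \emph{harder} one; Lemma~\ref{l5.3} (even $t$) is the one admitting the short argument.
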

\begin{proof}
Let us write $t=2s+1$. The assumptions $i-p+n=2s+1>0$ and $p\geq n$ imply $s<i$. 
By Lemma \ref{l4.3},  $\lambda=\omega^a_{-i}\in F_0$. Therefore 
\[\lambda\sim \lambda^{(1)}=(a+2)a^{n-i-1}(a-1)\ldots(a-i).\]
Since $d_{1,n-s-1}(\lambda^{(1)})=p$, Lemma \ref{l4.4} implies  $\lambda^{(1)}\in F_0$. Then 
\[\lambda^{(1)}\sim\lambda^{(2)}=(a+2)(a+1)^2a^{n-i-3}(a-1)\ldots (a-i).\]
Since $d_{3,n-s+1}(\lambda^{(2)})=p+1$, we have  $s_{\epsilon_3-\epsilon_{n-s+1},p}\bullet \lambda^{(2)}=\lambda^{(3)}$, 
where 
\[\lambda^{(3)}=(a+2)(a+1)a^{n-i-2}(a-1)\ldots (a-i+s)^2(a-i+s-2)\ldots (a-i).\]
Therefore, $\lambda^{(2)}\sim_{ev}\lambda^{(3)}$.
Since $d_{1,n-s-1}(\lambda^{(3)})=p$ and $d_{2,n-s}(\lambda^{(3)})=p$, by Lemma \ref{l4.4} we have $\lambda^{(3)}\in F_0$ and 
\[\lambda^{(3)}\sim\lambda^{(4)}=(a+2)(a+1)a^{n-i-2}(a-1)\ldots (a-i+s+1)^3(a-i+s-2)\ldots (a-i).\]
Since $d_{1,n-s-1}(\lambda^{(4)})=p$ and $d_{2,n-s+1}(\lambda^{(4)})=p$, Lemma \ref{l4.4} implies $\lambda^{(4)}\in F_0$ and
\[\begin{aligned}\lambda^{(4)}\sim\lambda^{(5)}=&(a+2)(a+1)a^{n-i-2}(a-1)\ldots (a-i+s+2)^3(a-i+s+1)\\
&(a-i+s-2)\ldots (a-i).\end{aligned}\]
Since $d_{1,n-s}(\lambda^{(5)})=p$ and $d_{2,n-s+1}(\lambda^{(5)})=p$, we obtain $\lambda^{(5)}\in F_0$.

If we change entries in $\lambda^{(5)}$ only in indices larger than $n-i$ but keep entries at indices $n-s$ and $n-s+1$ the same, then the resulting weight $\tilde{\lambda}$  satisfies $d_{1,n-s}(\tilde{\lambda})=p$ and $d_{2,n-s+1}(\tilde{\lambda})=p$, which implies $\tilde{\lambda}\in F_0$.
Therefore
\[\begin{aligned}\lambda^{(5)}\sim& (a+2)(a+1)a^{n-i-2}(a-1)\ldots (a-i+s+2)^3(a-i+s+1)(a-i+s)\\
&(a-s-3)\ldots (a-i)\\
\ldots\\
\sim& (a+2)(a+1)a^{n-i-2}(a-1)\ldots (a-i+s+2)^3(a-i+s+1)\ldots (a-i+2)\\
\sim& (a+2)(a+1)a^{n-i-2}(a-1)\ldots (a-i+s+3)^3(a-i+s+2)(a-i+s+1)\\
&\ldots (a-i+2)\\
\ldots\\
\sim& (a+2)(a+1)a^{n-i-2}(a-1)^3(a-2)\ldots  (a-i+2)\\
\sim& (a+2)(a+1)a^{n-i}(a-1)(a-2)\ldots  (a-i+2)=\lambda^{(6)}.\\
\end{aligned}\]

Since $\lambda^{(6)}\in F_0$, we have 
\[\lambda^{(6)}\sim\lambda^{(7)}=(a+2)(a+1)^3a^{n-i-2}(a-1)\ldots (a-i+2).\]
Since $d_{1,n-s}(\lambda^{(7)})=p$, $d_{2,n-s+1}(\lambda^{(7)})=p$, 
$d_{3,n-s+2}(\lambda^{(7)})=p+1$ and $d_{4,n-s+2}(\lambda^{(7)})$ $=p$, 
all conditions $(*_{1,l})$, $(*_{2,l})$ and $(*_{4,l})$ are satisfied by Lemma \ref{l4.4}.
Since $(*_{3,n-s+2})$ is also satisfied, consider $(*_{3,n-s+2+j})$, where $1\leq j\leq s-2$.
Then $d_{3,n-s+2+j}(\lambda^{(7)})=p+1+2j$ and $d_{3+2j,n-s+2+j}(\lambda^{(7)})=p$, where $5\leq 3+2j\leq n-i$.
By Lemma \ref{l4.4}, all conditions $(*_{3,l})$ are satisfed, which implies $\lambda^{(7)}\in F_0$.
Then
\[\lambda^{(7)}\sim \lambda^{(8)}=(a+2)(a+1)^5a^{n-i-4}(a-1)\ldots (a-i+2).\]
Since the equalities $d_{1,n-s}(\lambda^{(8)})=p$, $d_{2,n-s+1}(\lambda^{(8)})=p$, 
and $d_{4,n-s+2}(\lambda^{(8)})=p$ are carried over from previous equalities for 
$\lambda^{(7)}$, and $d_{6,n-s+3}(\lambda^{(8)})=p$, by Lemma \ref{l4.4}, all conditions $(*_{1,l})$, $(*_{2,l})$ $(*_{4,l})$ and $(*_{6,l})$ are satisfied.
The condition $(*_{3,n-s+2})$ and $(*_{3,n-s+2+j})$ for $2\leq j\leq s-2$ are also satisfied because
$d_{3+2j,n-s+2+j}(\lambda^{(8)})=p$, where $7\leq 3+2j\leq n-i$. The condition $(*_{3,n-s+3})$ is satisfied because
$d_{4,n-s+2}(\lambda^{(8)})=p$. Therefore, by Lemma \ref{l4.4}, all conditions $(*_{3,l})$ are satisfied.
The condition $(*_{5,n-s+3})$ is satisfied. Since
$d_{5,n-s+3+j}(\lambda^{(8)})=p+1+2j$ and $d_{5+2j,n-s+3+j}(\lambda^{(8)})=p$, where $7\leq 5+2j\leq n-i$,
by Lemma \ref{l4.4}, all conditions $(*_{5,l})$ are satisfied.
Therefore, $\lambda^{(8)}\in F_0$.

We continue like this through the sequence of weights
\[\mu^{(k)}=(a+2)(a+1)^{2k+1} a^{n-i-2k}(a-1)\ldots (a-i+2)\in F_0\]
for $k$ such that $2k\leq n-i$ until we reach 
\[\lambda^{(9)}=(a+2)(a+1)^{n-i+1} a^{0}(a-1)\ldots (a-i+2)\]
since $n-i$ is even.

We will verify that each $\mu^{(k)}\in F_0$.
First, 
\[d_{1,n-s}(\mu^{(k)})=d_{2,n-s+1}(\mu^{(k)})=\ldots =d_{2t,n-s+t}(\mu^{(k)})=\ldots =d_{2k,n-s+k}(\mu^{(k)})=p\]
and Lemma \ref{l4.4} implies that conditions $(*_{1,l})$ and $(*_{2t,l})$ for $1\leq t\leq s$ for $\mu^{(k)}$  are satisfied.
Consider $1\leq t\leq k-1$. If $t\geq s$, then $d_{2t+1,n}(\mu^{(k)})<p$ and all conditions $(*_{2t+1,l})$ are satisfied. If $t<s$, then $d_{2t+1,n-s+t+1+j}(\mu^{(k)})=p+1+2j$
for $0\leq j\leq s-t-1$. Since 
 $d_{2t+2+2j, n-s+t+1+j}(\mu^{(k)})=p$ if $j\leq k-t$ and $d_{2t+1+2j, n-s+t+1+j}(\mu^{(k)})=p$ if $j>k-t$, 
by Lemma \ref{l4.4}, all conditions  $(*_{2t+1,l})$ for $1\leq t\leq k-1$ are satisfied and $\mu^{(k)}\in F_0$.

Since all weights $\mu^{(k)}$ are odd-linked to $\lambda^{(9)}$ and 
$\lambda^{(9)}\precneqq \omega^a_{-i}$, we conclude that $\omega^a_{-i}$ is not minimal.
\end{proof}

Combining Proposition \ref{p5.2} and Lemmas \ref{l5.3}, \ref{l5.4}, we have proved the following statement.

\begin{pr}\label{p5.5}
If $p\geq n$ and $2\leq i$, then $\omega^a_{-i}$ is not minimal. 
\end{pr}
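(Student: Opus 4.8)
The plan is to prove Proposition~\ref{p5.5} by a case analysis on the size of $i$, exhibiting in each case a dominant weight $\mu\precneqq\omega^a_{-i}$ with $\mu\sim\omega^a_{-i}$; the three cases are $2\le i\le p-n+1$, then $i>p-n+1$ with $t:=i-p+n$ even, and finally $t$ odd. Since $p\ge n$ these ranges exhaust all $i\ge2$, so the proposition follows at once by combining Proposition~\ref{p5.2} with Lemmas~\ref{l5.3} and~\ref{l5.4}; the only genuine work is contained in those three statements.

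In the first range one has $d_{1n}(\omega^a_{-i})=i+n-1\le p$, so Lemma~\ref{l5.1} puts $\omega^a_{-i}$ and every nearby weight with $d_{1n}\le p$ into $F_0$. I would then apply Proposition~\ref{main} repeatedly to add admissible increments $2\epsilon_j$ or $\epsilon_i+\epsilon_{i+1}$ (legal because the intermediate weights stay in $F_0$): first inflate the leading block $a^{n-i}$ to $(a+1)^{n-i}$ (or, in the boundary subcase $n+i-1=p$, to $(a+1)^2a^{n-i-2}$, keeping $F_0$-membership via Lemma~\ref{new2}), then absorb the descending tail $a-1,\dots,a-i$ into that block, arriving at $(a+1)^{n-i+1}a\cdots(a-i+2)\precneqq\omega^a_{-i}$; when $n-i$ is odd one inserts a preliminary step adding $2\epsilon_1$ and applying the affine reflection $s_{\epsilon_1-\epsilon_n,p}\bullet(-)$. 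In the second range, writing $t=2s$ or $t=2s+1$, one inflates the leading block, applies an affine reflection $s_{\epsilon_k-\epsilon_{n-s+1},p}\bullet(-)$ to create a repeated entry deep inside the weight, and then alternates Proposition~\ref{main} with the $F_0$-criteria of Section~4 (Lemmas~\ref{l4.3}, \ref{l4.4}, \ref{new2}) to slide the tail inward until a weight $\precneqq\omega^a_{-i}$ is reached.

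The main obstacle is not the assembly of the final statement---that is immediate once the three ranges are seen to cover $i\ge2$---but rather the bookkeeping inside Lemma~\ref{l5.4} (the $t$ odd case): one must certify that every weight along a long odd-linkage chain, culminating in the family $\mu^{(k)}=(a+2)(a+1)^{2k+1}a^{n-i-2k}(a-1)\cdots(a-i+2)$, lies in $F_0$. This amounts to verifying Jantzen's condition $(*_{u,v})$ for all $1\le u<v\le n$, which in turn requires, for each relevant $u$, pinpointing an index where the pertinent $d$-value is an exact multiple of $p$; organizing these indices uniformly in the parameters $k$, $s$, and $t$ is the technically delicate point.
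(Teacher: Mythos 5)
Your proposal follows the paper's own route exactly: Proposition \ref{p5.5} is obtained by combining Proposition \ref{p5.2} (covering $2\le i\le p-n+1$) with Lemmas \ref{l5.3} and \ref{l5.4} (covering $t=i-p+n>1$ even and odd, respectively), and your sketches of those three ingredients --- the inflation of the leading block via Proposition \ref{main} and Lemma \ref{l5.1}, the affine reflections $s_{\epsilon_k-\epsilon_{n-s+1},p}\bullet(-)$, and the $F_0$-verification for the family $\mu^{(k)}=(a+2)(a+1)^{2k+1}a^{n-i-2k}(a-1)\cdots(a-i+2)$ via the Section~4 criteria --- match the paper's arguments. This is correct and essentially identical to the paper's proof.
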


\section{Non-minimality of $\omega^a_{-i}$ for $2\leq i\leq \frac{p-1}{2}$ when $p\leq n$}

Assume $2\leq i\leq \frac{p-1}{2}$, which implies $p\geq 5$. Also assume $p\leq n$ and 
write $n=p\underline{n}+\overline{n}$, where $0\leq \overline{n}<p$.

Denote $\nu=1^i$. For a weight $\lambda$ and $1\leq j\leq n-i$ we define 
\[d_j(\lambda)=(d_{j,n-i+1}(\lambda), \ldots d_{j,n}(\lambda))\] 
and $\overline{d}_j(\lambda)=d_j(\lambda)-\underline{n}p\nu$. 

\begin{pr}\label{p6.1}
The weight $\omega^a_{-i}$ for $i\geq 2$ is not minimal if 
$i+1\leq \overline{n}\leq p-i+1$.
\end{pr}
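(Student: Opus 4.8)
Write $\lambda=\omega^a_{-i}$. The plan is to imitate the mechanism of Section 5: produce a dominant weight $\mu$ reached from $\lambda$ by a chain of restricted odd links, an affine reflection identifying $\mu$ with a dominant weight $\kappa$ (so that $\mu\sim_{ev}\kappa$), and a further chain of restricted odd links carrying $\kappa$ to a dominant weight $\nu$ with $\nu\precneqq\lambda$. Together with Lemma \ref{even linkage} and repeated use of Proposition \ref{main}, the chain $\lambda\sim\mu\sim_{ev}\kappa\sim\nu\precneqq\lambda$ then shows that $\lambda$ is not minimal.

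First I would record what the hypothesis buys. From $d_{1,n-i+k}(\lambda)=p\underline{n}+\overline{n}-i+2k-1$ for $1\le k\le i$, the bounds $i+1\le\overline{n}\le p-i+1$ say precisely that every entry of $\overline{d}_1(\lambda)=(\overline{n}-i+1,\overline{n}-i+3,\ldots,\overline{n}+i-1)$ lies in $\{2,\ldots,p\}$. Two consequences are used throughout: (a) $d_{1,v}(\lambda)\not\equiv 1\pmod p$ for every $v>n-i$, and likewise, after bumping up one or two leading coordinates, $d_{2,v}(\cdot)\not\equiv 1\pmod p$ — this is exactly the hypothesis of Lemmas \ref{l4.6} and \ref{l4.7}, so the weights obtained from $\lambda$ by adjusting its first few coordinates stay in $F_0$; and (b) $p-\overline{n}-i+2\ge 1$, i.e. there is room to raise the leading coordinate(s) before the relevant $d_{1,v}$ becomes one more than a multiple of $p$. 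Note also $d_{n-i,n}(\lambda)=2i<p$, so the tail hypotheses of the Section 4 lemmas are met.

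\emph{Phase 1.} Starting from $\lambda\in F_0$ (Lemma \ref{l4.3}), apply Proposition \ref{main}: add $\epsilon_1+\epsilon_2$ once (legitimate since $\lambda_1=\lambda_2$) and then $2\epsilon_1$ a suitable (by (b), nonnegative) number of times — or, when the parity of $\overline{n}+i$ forces it, rearrange the order of these moves and, if needed, introduce also an $\epsilon_2+\epsilon_3$, producing two parallel cases as in Proposition \ref{p5.2} and Lemmas \ref{l5.3}, \ref{l5.4}. By (a) every intermediate weight is in $F_0$ (Lemmas \ref{l4.6}, \ref{l4.7}). The target $\mu$ has a short leading staircase $(a+c_1)(a+c_2)\cdots$ chosen so that $d_{u,v}(\mu)=mp+1$ for some $1\le u<v\le n$ and $m\in\{\underline{n},\underline{n}+1\}$. \emph{Phase 2.} Then $s_{\epsilon_u-\epsilon_v,\,mp}\bullet\mu=\mu-\epsilon_u+\epsilon_v=:\kappa$ is dominant — $u$ and $v$ being placed at the ends of flat blocks of $\mu$ so that decreasing the $u$-th and increasing the $v$-th coordinate preserves dominance — and $\mu\sim_{ev}\kappa$. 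The reflection merges a leading coordinate back into the block of $a$'s while creating a repeated value inside the staircase. \emph{Phase 3.} Finally, slide this repeated value down the staircase by a sequence of $\epsilon_j+\epsilon_{j+1}$ and $2\epsilon_j$ moves with $j\ge 2$ (possibly preceded by a second affine reflection, exactly as in Lemma \ref{l5.4}), verifying $F_0$-membership at each step with Lemmas \ref{l4.3}--\ref{l4.7}; since each such move strictly lowers $m(\cdot)$ by Lemma \ref{lm3.2}, the endpoint $\nu$ satisfies $\nu\precneqq\lambda$.

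The main obstacle is the interplay between Phase 3 and the $F_0$-bookkeeping: choosing the exact leading staircase and reflection pair(s), handling the parity split on $\overline{n}+i$ and the boundary values $\overline{n}=i+1$ and $\overline{n}=p-i+1$ (where dominance of $\kappa$ may force a different reflection), and checking via the Jantzen-type criterion of Section 4 that no weight along the way leaves $F_0$. The entire verification rests on the two numerical consequences (a), (b) of the hypothesis, which is why the hypothesis takes the stated form $i+1\le\overline{n}\le p-i+1$.
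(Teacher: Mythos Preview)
Your overall three-phase strategy matches the paper's, and your reading of the hypothesis via (a) and (b) is right. But Phase 3 has a real gap. After Phase 1 raises the first coordinate to roughly $a+2r$ and the single reflection in Phase 2 drops it by only $1$, the post-reflection weight $\kappa$ has $m(\kappa)_1\approx 2r+i-2$, which for $r\geq 2$ is strictly larger than $m(\lambda)_1=i$. Moves with $j\geq 2$ never touch $\kappa_1$, so ``each such move lowers $m$'' does not by itself force the endpoint below $m(\lambda)$; you would have to raise \emph{every} tail coordinate up near $\kappa_1$, and the $F_0$-checks for that long chain are not covered by the lemmas you cite. The paper's missing idea is to run a second odd-link chain \emph{backwards in the first coordinate}: in the odd-parity case it exhibits $\mu^{(t)}=(a-1+2t)a^{n-i-1}(a-1)\cdots(a-i+2)^3$ for $t=1,\ldots,r$, checks each lies in $F_0$ via Lemma \ref{l4.5}, and concludes $\mu^{(1)}\sim\mu^{(r)}=\lambda^{(r+2)}$ with $m(\mu^{(1)})_1=i-1$.

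Two smaller points. First, Lemmas \ref{l4.6} and \ref{l4.7} require the leading block to be exactly $(a+1)$ or $(a+2)(a+1)$, so they do not cover your Phase-1 weights $(a+2j)a^{n-i-1}\cdots$ or $(a+2j+1)(a+1)a^{n-i-2}\cdots$ once $j\geq 1$; the paper uses Lemma \ref{l4.5} (the ``same $p$-interval'' criterion) throughout this proposition, and that is what your observation (a) actually feeds. Second, the even-parity case $\overline n-i$ even is considerably longer than your sketch allows: after the reflection and the staircase-sliding there is a further chain $\iota^{(j)}=(a+2)(a+1)^{1+2j}a^{n-i-2j}(a-1)\cdots(a-i+2)$ (verified in $F_0$ via Lemma \ref{l4.8} and the Section 4 machinery), and when $\underline n>0$ a \emph{second} affine reflection appears at the \emph{end} of Phase 3, with a separate sub-argument needed for $i=2$.
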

\begin{proof}
The assumptions $i\leq \frac{p-1}{2}$ and $p\leq n$ imply $n\geq i+3$.
The last inequality guarantees that all powers of $a$ appearing below have nonnegative exponents.
Set $\lambda=\omega^a_{-i}=a^{n-i}(a-1)\ldots (a-i)$. Then $\lambda\in F_0$ by Lemma \ref{l4.3}.

Assume \underline{$\overline{n}-i$ is odd}. 

Define 
\[\lambda^{(j)}=(a+2j)a^{n-i-1}(a-1)\ldots (a-i)\]
 for $j=1, \ldots, \frac{p-i-\overline{n}+2}{2}=r$. 
Then  for $j<r$ there is 
$\overline{d}_{1,n-i}(\lambda^{(j)})=-i-1+\overline{n}+2j\geq 0$ and 
$\overline{d}_{1,n}(\lambda^{(j)})=i-1+\overline{n}+2j\leq i-1+\overline{n}+p-i-\overline{n}<p$. Therefore by Lemma \ref{l4.5}, we have  $\lambda^{(j)}\in F_0$ for $1\leq j<r$.

Thus $\lambda\sim \lambda^{(r)}$ and 
$d_{1n}(\lambda^{(r)})=(\underline{n}+1)p+1$, which implies 
$s_{\epsilon_1-\epsilon_{n},(\underline{n}+1)p}\bullet \lambda^{(r)}=\lambda^{(r+1)}$, 
where
\[\lambda^{(r+1)}=(a+2r-1)a^{n-i-1}(a-1)\ldots (a-i+2)(a-i+1)^2,\]
hence $\lambda^{(r)}\sim_{ev} \lambda^{(r+1)}$.
Since $\overline{d}_{1,n-i}(\lambda^{(r+1)})=p-2i>0$ and 
$\overline{d}_{1,n}(\lambda^{(r+1)})=p-1$, Lemma \ref{l4.5} implies $\lambda^{(r+1)}\in F_0$ and 
\[\lambda^{(r+1)}\sim \lambda^{(r+2)}=(a+2r-1)a^{n-i-1}(a-1)\ldots (a-i+2)^3.\]

Define \[\mu^{(t)}=(a-1+2t)a^{n-i-1}(a-1)\ldots (a-i+2)^3\] for $t=1, \ldots, r$. Since
$\overline{d}_{1,n-i}(\mu^{(t)})=\overline{n}-i+2t-2\geq 1$
and $\overline{d}_{1,n}(\mu^{(t)})=p-2+2t-2r\leq p-2$,
Lemma \ref{l4.5} implies that all $\mu^{(t)}\in F_0$. Since $\mu^{(r)}=\lambda^{(r+2)}$, 
$\mu^{(1)}\sim \lambda^{(r+2)}\sim \lambda$. However, $\mu^{(1)}=(a+1)a^{n-i-1}(a-1)\ldots (a-i+2)^3\precneqq \lambda$ shows $\lambda$ is not minimal.

Assume \underline{$\overline{n}-i$ is even}. 
Define \[\lambda^{(j)}=(a+2j+1)(a+1)a^{n-i-2}(a-1)\ldots (a-i)\] for $j=0, \ldots, r=\frac{p-i-\overline{n}+1}{2}$. 
Since $\lambda\in F_0$, we have $\lambda\sim \lambda^{(0)}$.

Then for $0\leq j<r$ we have 
\[\overline{d}_{1,n-i}(\lambda^{(j)})=-i+\overline{n}+2j\geq 1, \]
\[\overline{d}_{1,n}(\lambda^{(j)})=i+\overline{n}+2j\leq i+\overline{n}+p-i-\overline{n}-1=p-1,\] 
\[\overline{d}_{2,n-i}(\lambda^{(j)})=-i-1+\overline{n}\geq 0, \]
\[\overline{d}_{2,n}(\lambda^{(j)})=i-1+\overline{n}\leq p,\] 
and by Lemma \ref{l4.5}, we conclude $\lambda^{(j)}\in F_0$ which implies $\lambda\sim \lambda^{(r)}$.

Since
$d_{1n}(\lambda^{(r)})=(\underline{n}+1)p+1$, we have 
$s_{\epsilon_1-\epsilon_{n},(\underline{n}+1)p}\bullet \lambda^{(r)}=\lambda^{(r+1)}$, where 
\[\lambda^{(r+1)}=(a+2r)(a+1)a^{n-i-2}(a-1)\ldots (a-i+2)(a-i+1)^2.\]
Therefore $\lambda^{(r)}\sim_{ev}\lambda^{(r+1)}$.
Since 
\[\overline{d}_{1,n-i}(\lambda^{(r+1)})=p-2i>0, \quad \overline{d}_{1,n}(\lambda^{(r+1)})=p-1,\]
\[\overline{d}_{2,n-i}(\lambda^{(r+1)})=1+\overline{n}-i-2\geq 0 \text{ and }
\overline{d}_{2,n}(\lambda^{(r+1)})=i+\overline{n}-2\leq p-1,\]
Lemma \ref{l4.5} implies $\lambda^{(r+1)}\in F_0$ and 
\[\lambda^{(r+1)}\sim \mu=(a+2r)(a+1)a^{n-i-2}(a-1)\ldots (a-i+2)^3.\]

Set $\mu=\mu^{(2)}$ and for $j=3, \ldots i-1$ define  
\[\mu^{(j)}=(a+2r)(a+1)a^{n-i-2}\ldots (a-i+j)^3(a-i+j-1)\ldots (a-i+2).\]
Since 
\[\overline{d}_{1,n-i}(\mu^{(j)})=p-2i>0, \quad \overline{d}_{1,n}(\mu^{(j)})=p-2,\]
\[\overline{d}_{2,n-i}(\mu^{(j)})=1+\overline{n}-i-2\geq 0 \text{ and } 
\overline{d}_{2,n}(\mu^{(j)})=i+\overline{n}-3\leq p-2,\] 
Lemma \ref{l4.5} implies $\mu^{(j)}\in F_0$  for all $j=2, \ldots, i-1$. Therefore, 
\[\begin{aligned}\mu\sim &\mu^{(i-1)}=
(a+2r)(a+1)a^{n-i-2}(a-1)^3(a-2)\ldots (a-i+2)\\
\sim &\mu^{(i)}=
(a+2r)(a+1)a^{n-i}(a-1)(a-2)\ldots (a-i+2).\end{aligned}\]

For $t=1, \ldots, s$ define 
\[\kappa^{(t)}=(a+2t)(a+1)a^{n-i}(a-1)(a-2)\ldots (a-i+2).\] Since
\[\overline{d}_{1,n-i}(\kappa^{(t)})=2t+\overline{n}-i-1>0, \quad 
\overline{d}_{1,n}(\kappa^{(t)})=2t+i-2+\overline{n}-1\leq p-2,\] 
\[\overline{d}_{2,n-i}(\kappa^{(t)})=1+\overline{n}-i-2\geq 0
\text{ and }
\overline{d}_{2,n}(\kappa^{(t)})=i+\overline{n}-3\leq p-2,\] 
Lemma \ref{l4.5} implies 
$\kappa^{(t)}\in F_0$ and 
\[\mu^{(i)}\sim \kappa^{(1)}=(a+2)(a+1)a^{n-i}(a-1)(a-2)\ldots (a-i+2).\]

For $j=0, \ldots, \frac{\overline{n}-i+2}{2}=r$ define 
\[\iota^{(j)}=(a+2)(a+1)^{1+2j}a^{n-i-2j}(a-1)(a-2)\ldots (a-i+2).\]

Consider $j<r$.
If $1<t\leq 2j+2$, then 
\[\overline{d}_{t,n-i+2}(\iota^{(j)})=1+\overline{n}-i+2-t\geq 1+\overline{n}-i+2-2j-2\geq 1+\overline{n}-i-\overline{n}+i>0\]
and
\[\overline{d}_{t,n}(\iota^{(j)})=i+\overline{n}-t-1\leq p-t<p.\]
Then by Lemma \ref{l4.5}, we infer $\iota^{(j)}\in F_0$ for each $j<r$. 

If $\underline{n}=0$, then 
\[\kappa^{(1)}\sim \iota^{(r-1)}=(a+2)(a+1)^{n-i+1}(a-1)\ldots (a-i+2)\prec \lambda.\]

If $\underline{n}>0$  and $i\geq 3$, then 
\[\kappa^{(1)}\sim \iota^{(r)}=(a+2)(a+1)^{\overline{n}-i+3}a^{\underline{n}p-2}(a-1)\ldots(a-i+2).\]

Since $d_{\overline{n}-i+4, n-i+3}(\iota^{(r)})=\underline{n}p+1$, we obtain 
\[s_{\epsilon_{\overline{n}-i+4}-\epsilon_{n-i+3},\underline{n}p}\bullet \iota^{(r)}=\iota^{(r+1)},\]
where 
\[\iota^{(r+1)}= (a+2)(a+1)^{\overline{n}-i+2}a^{\underline{n}p}(a-2)\ldots (a-i+2).\]
Therefore \[\iota^{(r)}\sim_{ev}\iota^{(r+1)}\precneqq \lambda.\]

Now assume $\underline{n}>0$ and $i=2$. In this case, we use
$\lambda\sim \kappa^{(1)}=(a+2)(a+1)a^{n-2}$. Since $n\geq p$, we can use Lemma \ref{l4.8} repeatedly
to get 
\[\kappa^{(1)}\sim (a+2)(a+1)^{p-2}a^{n-p+1}=\nu^{(1)}.\] 
Since 
$d_{1,p}(\nu^{(1)})=p+1$, we obtain $s_{\epsilon_1-\epsilon_{p},p}\bullet \nu^{(1)}=\nu^{(2)}$,
where $\nu^{(2)}=(a+1)^{p}a^{n-p}$.
Therefore, $\nu^{(1)}\sim_{ev}\nu^{(2)}\precneqq \lambda$.
\end{proof}

\begin{pr}\label{p6.2}
The weight $\omega^a_{-i}$ for $i\geq 2$ is not minimal if 
$\overline{n}\geq p-i+2$.
\end{pr}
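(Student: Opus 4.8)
The plan is to mimic the proof of Proposition \ref{p6.1}: starting from $\lambda=\omega^a_{-i}$ we build a chain of restricted odd linkages (Proposition \ref{main}, applied along weights verified to lie in $F_0$ by the criteria of Section 4) together with even linkages by affine Weyl reflections (legitimate by Lemma \ref{even linkage}, all weights involved having defect $0$), terminating at a dominant weight $\mu$ with $m(\mu)\precneqq m(\lambda)$; this shows $\omega^a_{-i}$ is not minimal. Note that $p-i+2\leq\overline n\leq p-1$ already forces $i\geq 3$, so we assume $i\geq 3$. Since $d_{1,n}(\lambda)=n+i-1=p(\underline n+1)+(\overline n+i-1-p)$ with $1\leq\overline n+i-1-p\leq i-2$, the weight $\lambda$ lies just beyond the $(\underline n+1)$-st affine wall for ${\bf GL}(n)$, while its flat top $\lambda_1=\dots=\lambda_{n-i}=a$ forbids a reflection $s_{\epsilon_1-\epsilon_v,kp}\bullet\lambda$ until the top has been raised.

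I would proceed in three stages, splitting on the parity of $\overline n-i$ and on whether $\underline n=0$. First, using $2\epsilon_1$-moves together with a single $\epsilon_1+\epsilon_2$-move to adjust parity, pass from $\lambda$ to a weight $\lambda'$ with $\lambda'_1$ only slightly larger than $a$, of the form $(a+2)a^{n-i-1}(a-1)\cdots(a-i)$ or $(a+2)(a+1)a^{n-i-2}(a-1)\cdots(a-i)$; then, once $d_{1,v}(\lambda')$ (respectively $d_{2,v}(\lambda')$) equals $p(\underline n+1)+1$ at a suitable interior index $v$ of the bottom staircase, apply $s_{\epsilon_1-\epsilon_v,p(\underline n+1)}\bullet\lambda'$ (respectively $s_{\epsilon_2-\epsilon_v,p(\underline n+1)}\bullet\lambda'$): this raises the entry $a-t$ sitting at $v$ to $a-t+1$ and merges the raised coordinate back into its neighbour, producing a weight whose bottom staircase \emph{skips} one value. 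Inflating the resulting $(a+1)$-plateau by $\epsilon_t+\epsilon_{t+1}$-moves along the $a$-block, and --- for $\underline n\geq 1$ --- performing a second reflection $s_{\epsilon_q-\epsilon_{q'},\underline n p}$ with $q$ the last index of the $(a+1)$-plateau and $q'$ the index carrying $a-1$, separated by exactly $\underline n p$ entries equal to $a$ (so $d_{q,q'}=\underline n p+1$), yields $(a+2)(a+1)^{\overline n-i+2}a^{\underline n p}(a-2)\cdots(a-i+2)$; for $\underline n=0$ the inflation already gives $(a+2)(a+1)^{n-i+1}(a-1)\cdots(a-i+2)$, and no second reflection is needed. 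In either case the terminal weight $\mu$ has $m$-vector agreeing with $m(\lambda)$ up to an entry where it is strictly smaller (a $2$ in place of a $3$, or a $1$ in place of a $2$), so $m(\mu)\precneqq m(\lambda)$, which finishes the proof.

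The hard part --- and the reason Section 6 is the most technical --- is the bookkeeping in Stages 1 and 2: verifying that every intermediate weight lies in $F_0$. Because $\overline n$ is close to $p$, very many of the differences $d_{u,v}$ land exactly at, or just across, a multiple of $p$, so the estimates of Proposition \ref{p6.1}, which rely on $\overline n\leq p-i+1$ (for instance on $i+\overline n-1\leq p$), no longer apply; in particular one cannot simply raise $\lambda_1$ far, since then $d_{1,2}$ itself reaches $p$. The intermediate weights and the block lengths must instead be chosen so that the hypotheses of Lemma \ref{l4.5} are restored --- $d_{1,k}(\cdot)<p$ for $k$ the first index of the $a$-block, and for each $u<k$ an integer $r_u$ with $r_up\leq d_{u,l}(\cdot)\leq d_{u,n}(\cdot)\leq(r_u+1)p$ --- while Lemmas \ref{l4.6} and \ref{l4.7} handle the weights with one or two raised coordinates, and the parity of $\overline n-i$ is corrected by the single $\epsilon_1+\epsilon_2$-move. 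The admissible ranges for these block lengths are exactly what pin down which reflections are available, hence the precise form of $\mu$; the endpoint $\overline n=p-i+2$, the case $\underline n=0$, and the small values $i=3,4$ require these verifications to be carried out individually, where almost all of the work lies.
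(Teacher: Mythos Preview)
Your outline follows the same architecture as the paper's proof: split on the parity of $\overline n-i$, lift the top of $\lambda$ by one or two restricted odd moves, apply an affine reflection at level $(\underline n+1)p$ into the staircase, then continue by odd moves (and, when $\underline n>0$, a second reflection at level $\underline n p$) until reaching a weight $\precneqq\lambda$. Your terminal weights in the $\overline n-i$ even case match the paper's $\iota^{(r)}$ and $\iota^{(r+2)}$ exactly.

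Two points where your sketch drifts from what actually works. First, the intermediate weight $(a+2)(a+1)a^{n-i-2}(a-1)\cdots(a-i)$ is not reachable from $\lambda$ by the admissible odd moves $+2\epsilon_j$ and $+\epsilon_j+\epsilon_{j+1}$ (with $\lambda_j=\lambda_{j+1}$); the paper goes through $(a+1)^2a^{n-i-2}\cdots$ and then $(a+3)(a+1)a^{n-i-2}\cdots$ in the even-parity branch. Second, the odd-parity branch in the paper is much shorter than you describe: after a single reflection the process already terminates at a weight of shape $(a+1)a^{n-i-1}(a-1)\cdots(a-i+r+2)^3(a-i+r+1)\cdots(a-i+2)$, with no second reflection and no $(a+2)(a+1)^{\ldots}$ plateau; your unified endpoint applies only to the even-parity branch.

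On the $F_0$ verification: contrary to your emphasis, Lemma \ref{l4.5} is not used at all in this proposition --- in the regime $\overline n\geq p-i+2$ the values $\overline d_{u,n}$ exceed $p$, so the interval hypothesis $r_up\leq d_{u,l}\leq d_{u,n}\leq(r_u+1)p$ is typically violated. The paper's verifications go entirely through Lemma \ref{l4.6} (checking $d_{1,v}\not\equiv 1$ and $d_{2,v}\not\equiv 1\pmod p$ for $v>n-i$), Lemma \ref{l4.4}, Lemma \ref{l4.8}, and direct use of Lemma \ref{new2}. If you try to push Lemma \ref{l4.5} here you will find the hypotheses fail at several intermediate steps; the ``$\not\equiv 1\pmod p$'' criterion of Lemma \ref{l4.6} is the correct replacement and is what organizes the entire chain.
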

\begin{proof}
The assumption $\overline{n}\geq p-i+2$ implies $\overline{n}\geq i+3$, which implies that all powers of $a$ appearing below have nonnegative exponents.
Write $\lambda=\omega^a_{-i}=a^{n-i}(a-1)\ldots (a-i)$ and $\overline{n}=p-t$.

Assume \underline{$\overline{n}-i$ is odd}.

Since $\lambda\in F_0$ by Lemma \ref{l4.3}, we have 
\[\lambda\sim (a+2)a^{n-i-1}(a-1)\ldots (a-i)=\lambda^{(1)}.\]

If $r=\frac{i-t}{2}$, then 
\[d_{1,n-r}(\lambda^{(1)})=2+i-r+n-r-1=1+i+n-i+t=(\underline{n}+1)p+1.\] 
Therefore, $s_{\epsilon_1-\epsilon_{n-r},(\underline{n}+1)p}\bullet \lambda^{(1)}=\lambda^{(2)}$, where
\[\lambda^{(2)}=(a+1)a^{n-i-1}(a-1)\ldots (a-i+r+2)(a-i+r+1)^2(a-i+s-1)\ldots (a-i),\]
which implies $\lambda^{(1)}\sim_{ev}\lambda^{(2)}$.

Since 
\[\overline{d}_{1,n-i+1}(\lambda^{(2)})=2-i+\overline{n}\geq p-2i+4\geq 5, 
\overline{d}_{1,n-r}(\lambda^{(2)})=p-1,\] 
\[\overline{d}_{1,n-r+1}(\lambda^{(2)})=p+2 \text{ and } \overline{d}_{1,n}(\lambda^{(2)})=i+\overline{n}<2p,\] there is no $d_{1,v}(\lambda^{(2)})\equiv 1 \pmod p$ for 
$n-i<v$. 
By Lemma \ref{l4.6}, we infer $\lambda^{(2)}\in F_0$ and 
\[\lambda^{(2)}\sim \lambda^{(3)}=(a+1)a^{n-i-1}(a-1)\ldots (a-i+r+3)(a-i+r+2)^3(a-i+r-1)\ldots (a-i).\]

Since 
\[\overline{d}_{1,n-i+1}(\lambda^{(3)})=\overline{d}_{1,n-i+1}(\lambda^{(2)})>0,\overline{d}_{1,n-r}(\lambda^{(3)})=p-2,\] 
\[\overline{d}_{1,n-r+1}(\lambda^{(3)})=p+2 \text{ and }
\overline{d}_{1,n}(\lambda^{(3)})=\overline{d}_{1,n}(\lambda^{(2)})<2p,\]
there is no 
$d_{1,v}(\lambda^{(3)})\equiv 1 \pmod p$ for $n-i<v$. By Lemma \ref{l4.6}, we infer $\lambda^{(3)}\in F_0$
and 
 \[\lambda^{(3)}\sim \lambda^{(4)}=(a+1)a^{n-i-1}(a-1)\ldots (a-i+r+2)^3(a-i+r+1)(a-i+r-2)\ldots (a-i).\]

Since 
\[\overline{d}_{1,n-i+1}(\lambda^{(4)})=\overline{d}_{1,n-i+1}(\lambda^{(2)})>0, \overline{d}_{1,n-r+1}(\lambda^{(4)})=p,\]
\[\overline{d}_{1,n-r+2}(\lambda^{(4)})=p+4 \text{ and } \overline{d}_{1,n}(\lambda^{(4)})=\overline{d}_{1,n}(\lambda^{(2)})<2p,\] 
there is no 
$d_{1,v}(\lambda^{(4)})\equiv 1 \pmod p$ for $n-i<v$. By Lemma \ref{l4.6}, we infer $\lambda^{(4)}\in F_0$
and  
\[\begin{aligned}\lambda^{(4)}\sim \lambda^{(5)}=&(a+1)a^{n-i-1}(a-1)\ldots (a-i+r+2)^3(a-i+r+1)(a-i+r)\\
&(a-i+r-3)\ldots (a-i).\end{aligned}\]
For $j=0, \ldots, r-3$ define  
\[\begin{aligned}\mu^{(r-2+j)}=&(a+1)a^{n-i-1}(a-1)\ldots (a-i+r+2)^3(a-i+r+1)(a-i+r)\ldots \\
&\ldots (a-i+r-j)(a-i+r-3-j)\ldots (a-i).\end{aligned}\]
Since 
\[\overline{d}_{1,n-i+1}(\mu^{(r-2+j)})=\overline{d}_{1,n-i+1}(\lambda^{(2)})>0, \overline{d}_{1,n-r+1}(\mu^{(r-2+j)})=p,\] 
\[\overline{d}_{1,n-r+2}(\mu^{(r-2+j)})=p+2 \text{ and } \overline{d}_{1,n}(\mu^{(r-2+j)})=\overline{d}_{1,n}(\lambda^{(2)})<2p,\]
there is no 
$d_{1,v}(\mu^{(r-2+j)})\equiv 1 \pmod p$ for $n-i<v$. By Lemma \ref{l4.6}, we infer $\mu^{(r-2+j)}\in F_0$
and $\mu^{(r-2+j)}\sim \mu^{(r-2+j+1)}$, where 
\[\mu^{(2r-4)}=(a+1)a^{n-i-1}(a-1)\ldots (a-i+r+2)^3(a-i+r+1)(a-i+r)\ldots (a-i+2).\]
Since $\lambda^{(5)}=\mu^{(r-2)}$, we derive 
$\lambda\sim \mu^{(2r-4)}\precneqq \lambda$.

Assume \underline{$\overline{n}-i$ is even}.

Since $\lambda\in F_0$ by Lemma \ref{l4.3}, we have 
\[\lambda\sim (a+1)^2a^{n-i-2}(a-1)\ldots (a-i)=\lambda^{(1)}.\]

We have 
\[\overline{d}_1(\lambda^{(1)})=(2-i+\overline{n},4-i+\overline{n}, \ldots,p-1,p+1,p+3, \ldots, i+\overline{n}),\]
\[\overline{d}_{2, n-i+1}(\lambda^{(1)})=1-i+\overline{n}\geq 1-i+p-i+2=p-2i+3>0,
\overline{d}_{2,n-i+\frac{p-n+i+1}{2}}(\lambda^{(1)})=p,\]
\[\overline{d}_{2,n-i+\frac{p-n+i+1}{2}+1}(\lambda^{(1)})=p+2 \text{ and }
\overline{d}_{2,n}(\lambda^{(1)})=i+\overline{n}-1<2p.\] Therefore there is no $n-i<v$ such that
$d_{2,v}(\lambda^{(1)})\equiv 1 \pmod p$.

Assume $n-i<v$ and write $d_{2,v}(\lambda^{(1)})=c'p^{s'}+b'p^{s'+1}$ for $0<c'<p$. We can assume $b'>0$.
Since $d_{2,v}(\lambda)\not \equiv 1 \pmod p$, we have $c'p^{s'}>1$. 
Then $d_{2, c'p^{s'}+1}(\lambda)=c'p^{s'}$, hence
$(*_{2,v})$ is satisfied by Lemma \ref{new2}.

Assume $n-i<v$, $d_{1,v}(\lambda^{(1)})=cp^s+bp^{s+1}$ for $0<c<p$. We can assume $b>0$.
If $s>0$, then $(*_{1,v})$ is satisfied by Lemma \ref{new}.
If $s=0$, then $\overline{d}_{1,n-i+1}(\lambda^{(1)})=2-i+\overline{n}\geq p-2i+4\geq 3$ and the inspection of the vector $\overline{d}_1(\lambda^{(1)})$ shows that if $n-i<v$, then $c\neq 2$. 
If $c=1$, then we have $d_{1,2}(\lambda^{(1)})=1$, while if $c>2$, then $d_{1,c}(\lambda^{(1)})=c$, hence $(*_{1,v})$ is satisfied by Lemma \ref{new2}.

Therefore by Lemma \ref{l4.4} we infer $\lambda^{(1)}\in F_0$ and 
\[\lambda^{(1)}\sim \lambda^{(2)}=(a+3)(a+1)a^{n-i-2}(a-1)\ldots(a-i).\]

Denote $r=\frac{i-t+1}{2}$. 
Then 
\[d_{1,n-r}(\lambda^{(2)})=3+i-r+n-r-1=2+i-2r+n=
2+i+n-i+t-1=(\underline{n}+1)p+1\] 
and $s_{\epsilon_1-\epsilon_{n-r},(\underline{n}+1)p}\bullet \lambda^{(2)}=\lambda^{(3)}$, where
\[\lambda^{(3)}=(a+2)(a+1)a^{n-i-2}(a-1)\ldots (a-i+r+1)^2(a-i+r-1)\ldots (a-i).\]
Thus, $\lambda^{(2)}\sim_{ev}\lambda^{(3)}$.

We have 
\[\overline{d}_1(\lambda^{(3)})=(3-i+\overline{n},5-i+\overline{n}, \ldots,p-2,p-1,p+2, \ldots, i+\overline{n}+1),\]
where 
$\overline{d}_{1, n-i+1}(\lambda^{(3)})=3-i+\overline{n}\geq 3-i+p-i+2=p-2i+5>1$
and $\overline{d}_{1,n}(\lambda^{(3)})=i+\overline{n}+1<2p$. Therefore there is no $n-i<v$ such that
$d_{1,v}(\lambda^{(3)})\equiv 1 \pmod p$.
We have 
\[\overline{d}_2(\lambda^{(3)})=(1-i+\overline{n},3-i+\overline{n}, \ldots,p-4,p-3,p,p+2, \ldots, i+\overline{n}-1),\]
where 
$\overline{d}_{2, n-i+1}(\lambda^{(3)})=1-i+\overline{n}\geq 1-i+p-i+2=p-2i+3>1$
and $\overline{d}_{2,n}(\lambda^{(3)})=i+\overline{n}-1<2p$. Therefore there is no $n-i<v$ such that
$d_{2,v}(\lambda^{(3)})\equiv 1 \pmod p$.
By Lemma \ref{l4.6}, we infer $\lambda^{(3)}\in F_0$ and
\[\lambda^{(3)}\sim \lambda^{(4)}=(a+2)(a+1)a^{n-i-2}(a-1)\ldots (a-i+r+2)^3(a-i+r-1)\ldots (a-i).\]

Next, we form a sequence of weights 
\[\begin{aligned}\lambda^{(4)}\sim\mu^{(i-r-1)}=&(a+2)(a+1)a^{n-i-2}(a-1)\ldots (a-i+r+2)^3(a-i+r+1)\\
&(a-i+r-2)\ldots (a-i)\end{aligned}\] 
through 
\[\begin{aligned}\mu^{(i-2)}=&(a+2)(a+1)a^{n-i-2}(a-1)\ldots (a-i+r+2)^3(a-i+r+1)(a-i+r)\\
&\ldots (a-i+2)=\kappa\end{aligned}\]
that are all linked because $\overline{d}_{1,v}(\mu^{(j)})=\overline{d}_{1,v}(\lambda^{(4)})$ for $v\leq n-r$, the parities of the remaining entries in $\overline{d}_1(\mu^{(j)})$ and $\overline{d}_2(\mu^{(j)})$ do not change and stay contained within the interval $[2,2p]$, which imply that there is no $n-i<v$ such that
$d_{1,v}(\mu^{(j)})\equiv 1 \pmod p$ and no $n-i<v$ such that $d_{2,v}(\mu^{(j)})\equiv 1 \pmod p$.
Then Lemma \ref{l4.6} implies that all $\mu^{(j)}\in F_0$.

We have 
\[\overline{d}_1(\kappa)=(3-i+\overline{n},5-i+\overline{n}, \ldots,p-6,p-4,p-3,p-2,p,p+2, \ldots, i+\overline{n}-1),\]
and 
\[\overline{d}_2(\kappa)=(1-i+\overline{n},3-i+\overline{n}, \ldots,p-8,p-6,p-5,p-4,p-2, p,\ldots, i+\overline{n}-3),\] 

From $\kappa$, there is a series of linked weights 
\[\kappa^{(j)}=(a+2)(a+1)a^{n-i-2}(a-1)\ldots (a-i+j+2)^3(a-i+j+1)(a-i+j)\ldots (a-i+2)\]
for $j=r, \ldots, i-3$ that belong to $F_0$. 
Indeed, for each weight we have
\[\overline{d}_{1,n-i+1}(\kappa^{(j)})>\overline{d}_{2,n-i+1}(\kappa^{(j)})>1 \text{ and }
\overline{d}_{2,n}(\kappa^{(j)})<\overline{d}_{1,n}(\kappa^{(j)})<2p,\] 
\[\overline{d}_{1,v}(\kappa^{(j)})=\overline{d}_{1,v}(\kappa), \overline{d}_{2,v}(\kappa^{(j)})=\overline{d}_{2,v}(\kappa) \text{ for } v\geq n-r+1,\]
while the remaining entries in $\overline{d}(\kappa^{(j)})$ are less than $p$.
Therefore, there is no $n-i<v$ such that $d_{1,v}(\kappa^{(j)})\equiv 1 \pmod p$ or $d_{2,v}(\kappa^{(j)})\equiv 1 \pmod p$, and Lemma \ref{l4.6} implies $\kappa^{(j)}\in F_0$. 

We obtain 
\[\kappa\sim \kappa^{(i-3)}=(a+2)(a+1)a^{n-i-2}(a-1)^3\ldots (a-2)\ldots (a-i+2)\]
and 
\[\kappa^{(i-3)}\sim (a+2)(a+1)a^{n-i}(a-1)\ldots (a-2)\ldots (a-i+2)=\iota.\]

To finish, we define weights 
\[\iota^{(j)}=(a+2)(a+1)^{1+2j}a^{n-i-2j}(a-1)\ldots (a-2)\ldots (a-i+2)\]
for $j=0, \ldots, \frac{\overline{n}-i}{2}=r$. 

Then for each $j$, 
\[\overline{d}_{1}(\iota^{(j)})=(2+\overline{n}-i, 3+\overline{n}-i,5+\overline{n}-i, \ldots, i+\overline{n}-1 ).\] 
Since 
\[\overline{d}_{1,n-i+3}(\iota^{(j)})\geq 5+\overline{n}-i>1, \quad  
\overline{d}_{1,n}(\iota^{(j)})=i+\overline{n}-1< 2p.\]
and $\overline{d}_{1,v}(\iota^{(j)})$ is odd for $v\geq n-i+3$, 
we infer that there is no $n-i+2<v$ such that $\overline{d}_{1,v}(\iota^{(j)})\equiv 1 \pmod p$.

We have 
\[\overline{d}_{2+2j}(\iota^{(j)})=(-i+\overline{n}-2j, -i+\overline{n}+1-2j,-i+\overline{n}+3-2j, \ldots, i+\overline{n}-1-2j ).\] 
and 
\[\overline{d}_k(\iota^{(j)})=(-i+\overline{n}+2-k, -i+\overline{n}+3-k,-i+\overline{n}+5-k, \ldots, i+\overline{n}+1-k )\] 
for $1<k\leq 2+2j$.

For $1<k\leq 2+2j$ we have 
\[\overline{d}_{k,n-i+3}(\iota^{(j)})=\overline{n}-i+5-k\geq \overline{n}-i+3-2j\geq  3+\overline{n}-i-\overline{n}+i=3\]
and
\[\overline{d}_{k,n}(\iota^{(j)})=i+\overline{n}+1-k<2p.\]
Since for $v\geq n-i+3$ the number $\overline{d}_{2+2j,v}(\iota^{(j)})$ is odd, we derive that $d_{2+2j,v}(\iota^{(j)})\not\equiv 1 \pmod p$ for $v\geq n-i+3$. 

By Lemma \ref{l4.3}, in order to show that $\iota^{(j)}\in F_0$, it is enough to verify 
the condition $(*_{k,v})$ for $1\leq k\leq 2+2j$ and $n-i+2<v$.
Let $d_{k,v}(\lambda)=cp^s+bp^{s+1}$ with $0<c<p$. 

Assume $1<k<2j+2$. If $s>0$, then conditions $(*_{k,v})$ are satisfied by Lemma \ref{new}.
If $s=0$ and $2+2j-k\geq c$, then  $d_{k, k+c}(\iota^{(j)})=c$, while if $s=0$ and $2+2j-k<c$, then 
$d_{k, k+c-1}(\iota^{(j)})=c$. Therefore all conditions $(*_{k,v})$ are satisfied for $1<k<2+2j$ by Lemma \ref{new2}.

Assume $k=1$ or $k=2j+2$. Every condition $(*_{2j+2,v})$ for $v\leq n-i+2$ is clearly satisfied. 
The condition $\overline{n}\geq p-i+2$ implies $i>2$ and $2j+4\leq \overline{n}-i+4\leq p-1-3+4=p$.
Since $d_{1,2j+2}(\iota^{(j)})=2j+2$ and $d_{1,2j+3}(\iota^{(j)})=2j+4\leq p$,
the condition $(*_{1,v})$ is satisfied for $v\leq n-i+2$ by Lemma \ref{l4.8}. Therefore we can assume $n-i+3\leq v$.

Write $d_{2j+2,v}(\iota^{(j)})=c'p^{s'}+b'p^{s'+1}$ with $0<c'<p$. We can assume $b'>0$.
Since $d_{2j+2,v}(\iota^{(j)})\not \equiv 1 \pmod p$, we have $c'p^{s'}>1$. 
If $c'p^{s'}\geq 3$, then 
$d_{2j+2, 2j+c'p^{s'}}(\iota^{(j)})=c'p^{s'}$, while if $c'p^{s'}=2$, then 
$d_{2j+2, 2j+3}(\iota^{(j)})=2$,  hence $(*_{2j+2,v})$ is satisfied by Lemma \ref{new2}.

Write  $d_{1,v}(\iota^{(j)})=cp^s+bp^{s+1}$ with $0<c<p$. We can assume $b>0$.
Since $d_{1,v}(\iota^{(j)})\not \equiv 1 \pmod p$, we have $cp^s>1$.
Since $d_{2j+2,v}(\iota^{(j)})\not \equiv 1 \pmod p$, we have $cp^s\neq 3+2j$. 
If $cp^s\geq 4+2j$, then $d_{1,cp^s-1}=cp^s$, while if $2\leq cp^s\leq 2+2j$, then 
$d_{1,cp^s}(\lambda)=cp^s$. Therefore, condition $(*_{1,v})$ is satisfied by Lemma \ref{new2}.
Using Lemma \ref{l4.4}, we conclude that $\lambda\in F_0$.

Therefore by Lemma \ref{l4.3}, we infer $\iota^{(j)}\in F_0$ for each $j$. 

If $\underline{n}=0$, then 
\[\kappa^{(1)}\sim \iota^{(r)}=(a+2)(a+1)^{n-i+1}(a-1)\ldots (a-i+2)\precneqq \lambda.\]

If $\underline{n}>0$, then $\kappa^{(1)}\sim \iota^{(r)}\sim \iota^{(r+1)}$, where 
\[\iota^{(r+1)}=(a+2)(a+1)^{\overline{n}-i+3}a^{\underline{n}p-2}(a-1)\ldots(a-i+2).\]

Since $d_{\overline{n}-i+4, n-i+3}(\iota^{(r+1)})=\underline{n}p+1$, we have 
$s_{\epsilon_{\overline{n}-i+4}-\epsilon_{n-i+3},\underline{n}p}\bullet \iota^{(r+1)}=\iota^{(r+2)}$, where
\[\iota^{(r+2)}=(a+2)(a+1)^{\overline{n}-i+2}a^{\underline{n}p}(a-2)\ldots (a-i+2).\]
Therefore $\iota^{(r+1)}\sim_{ev}\iota^{(r+2)}\precneqq \lambda$.
\end{proof}

\begin{pr}\label{p6.3}
The weight $\omega^a_{-i}$ for $i\geq 2$ is not minimal if 
$\overline{n}\leq i$.
\end{pr}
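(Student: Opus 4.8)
The strategy is the one used in Propositions \ref{p6.1} and \ref{p6.2}: starting from $\lambda=\omega^a_{-i}$ I build a chain of weights in which consecutive members are related either by an odd linkage coming from Proposition \ref{main} (which requires the earlier of the two to lie in $F_0$) or by an affine reflection $s_{\beta,kp}\bullet$ under the dot action (always an even linkage), and which terminates at a dominant weight $\nu$ with $m(\nu)\precneqq m(\omega^a_{-i})$. Since $p\le n$ we have $\underline{n}\ge 1$; the role of the hypothesis $\overline{n}\le i$ is that the flat block $a^{n-i}$ of $\omega^a_{-i}$ now has length $n-i\le p\underline{n}$, which is precisely what makes the $F_0$-bookkeeping heavier than in Proposition \ref{p6.1}. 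I split according to the parity of $\overline{n}-i$.

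\emph{Case $\overline{n}-i$ odd.} I argue as in the odd case of Proposition \ref{p6.1}. From $\lambda=\omega^a_{-i}\in F_0$ (Lemma \ref{l4.3}) I add $2\epsilon_1$ repeatedly through $\lambda^{(j)}=(a+2j)a^{n-i-1}(a-1)\ldots(a-i)$, $j=0,\ldots,r$, where $r=\frac{p+2-i-\overline{n}}{2}\ge 1$ is the first index with $d_{1,n}(\lambda^{(r)})=(\underline{n}+1)p+1$. Then $s_{\epsilon_1-\epsilon_n,(\underline{n}+1)p}\bullet\lambda^{(r)}=\lambda^{(r+1)}=(a+2r-1)a^{n-i-1}(a-1)\ldots(a-i+2)(a-i+1)^2$, so $\lambda^{(r)}\sim_{ev}\lambda^{(r+1)}$; adding $\epsilon_{n-1}+\epsilon_n$ gives $\lambda^{(r+1)}\sim\lambda^{(r+2)}=(a+2r-1)a^{n-i-1}(a-1)\ldots(a-i+2)^3$; finally I subtract $2\epsilon_1$ down the weights $\mu^{(t)}=(a-1+2t)a^{n-i-1}(a-1)\ldots(a-i+2)^3$, $t=r,r-1,\ldots,1$, ending at $\mu^{(1)}=(a+1)a^{n-i-1}(a-1)\ldots(a-i+2)^3$, whose first coordinate exceeds its last by $i-1<i$, so $m(\mu^{(1)})\precneqq m(\omega^a_{-i})$.

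\emph{Case $\overline{n}-i$ even.} I first correct the parity by $\omega^a_{-i}\sim(a+1)^2a^{n-i-2}(a-1)\ldots(a-i)$, then raise the first coordinate through $\lambda^{(j)}=(a+2j+1)(a+1)a^{n-i-2}(a-1)\ldots(a-i)$ until $d_{1,n}=(\underline{n}+1)p+1$, apply $s_{\epsilon_1-\epsilon_n,(\underline{n}+1)p}$, turn the resulting $(a-i+1)^2$ into an $(a-i+2)^3$, slide this cube and the extra $(a+1)$ toward the front, and lower the first coordinate again, reaching $\kappa^{(1)}=(a+2)(a+1)a^{n-i}(a-1)\ldots(a-i+2)$, all exactly as in the even case of Proposition \ref{p6.1}. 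If $\overline{n}\ge i-2$ I continue, as there, through $\iota^{(j)}=(a+2)(a+1)^{1+2j}a^{n-i-2j}(a-1)\ldots(a-i+2)$ to $\iota^{(r)}=(a+2)(a+1)^{\overline{n}-i+3}a^{\underline{n}p-2}(a-1)\ldots(a-i+2)$ and apply $s_{\epsilon_{\overline{n}-i+4}-\epsilon_{n-i+3},\underline{n}p}$ (legitimate since $\underline{n}>0$), landing on $(a+2)(a+1)^{\overline{n}-i+2}a^{\underline{n}p}(a-2)\ldots(a-i+2)\precneqq\omega^a_{-i}$. If $\overline{n}\le i-4$ the exponent $\overline{n}-i+3$ is nonpositive, and instead I reflect $\kappa^{(1)}$ itself at $\underline{n}p$ along $\epsilon_1-\epsilon_{v_0}$, where $v_0$ is the index inside the strictly decreasing bottom block with $d_{1,v_0}(\kappa^{(1)})=\underline{n}p+1$ (such a $v_0$ exists because $\overline{n}+i$ is even and $\overline{n}\le i-4$), which produces a weight with first coordinate $a+1$ and last coordinate $a-i+2$, hence $\prec$-smaller. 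The case $i=2$ is treated separately using Lemma \ref{l4.8}, exactly as at the end of Proposition \ref{p6.1}.

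The main obstacle, and the bulk of the work, is to verify that every weight appearing in these chains lies in $F_0$, that is, to check Jantzen's conditions $(*_{u,v})$ of Proposition \ref{p4.1}. This is subtler than in Proposition \ref{p6.1} because, with $n-i\le p\underline{n}$, raising the first coordinate drives $d_{1,v}$ past the multiple $\underline{n}p$ while the flat block $a^{n-i}$ is still comparatively short, so Lemma \ref{l4.5} no longer applies uniformly and one falls back on Lemmas \ref{new2}, \ref{new}, \ref{l4.4}, \ref{l4.6}, \ref{l4.7}, and \ref{l4.8}, tracking for each relevant $v$ whether the ``$cp^s$-index'' or the ``$bp^{s+1}$-index'' lies inside the flat block, and separating off extra sub-cases for the smallest values of $j$ (resp.\ $t$) and for small $\overline{n}$. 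Once $F_0$-membership is established for each link, the linkage assertions follow from Proposition \ref{main} and the displayed affine reflections, and comparing the vectors $m(\cdot)$ yields the non-minimality of $\omega^a_{-i}$.
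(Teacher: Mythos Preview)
Your plan has a genuine gap: the chain you copy from Proposition~\ref{p6.1} does not survive the hypothesis $\overline{n}\le i$, because several of the intermediate weights you need are \emph{not} in $F_0$.

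Take the odd case with $p=7$, $n=9$, $i=3$, $\overline{n}=2$. Your terminal weight is $\mu^{(1)}=(a+1)a^{5}(a-1)^{3}$, and to run the descent $\mu^{(r)}\sim\cdots\sim\mu^{(1)}$ you need $\mu^{(1)}\in F_0$. But $d_{1,7}(\mu^{(1)})=8=1+7$ with $c=1$, $s=0$, $b=1$, and the values $d_{1,k}(\mu^{(1)})$ for $2\le k\le 6$ are $2,3,4,5,6$; neither $1$ nor $7$ is attained, so $(*_{1,7})$ fails and $\mu^{(1)}\notin F_0$. The same obstruction hits the even case even earlier: with $p=5$, $n=5$, $i=2$, $\overline{n}=0$, already $(a+1)^{2}a(a-1)(a-2)$ fails $(*_{2,5})$ (since $d_{2,5}=6=1+5$ while $d_{2,3}=2$, $d_{2,4}=4$), so you cannot take the next odd-linkage step. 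The point is that in Proposition~\ref{p6.1} these $F_0$ checks rely on Lemma~\ref{l4.5} together with $\overline{d}_{2,n-i}\ge 0$, which is exactly the inequality $\overline{n}\ge i+1$ you no longer have.

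The paper avoids this by \emph{not} pushing $d_{1,n}$ up to $(\underline{n}+1)p+1$. Instead it stops at $(a+1)^{2}a^{n-i-2}(a-1)\ldots(a-i)$ (or $(a+2)^{2}\ldots$ in the other parity) and reflects at level $\underline{n}p$ via $s_{\epsilon_{2}-\epsilon_{n-r},\,\underline{n}p}$ with $r=\lfloor\frac{i+\overline{n}-1}{2}\rfloor$, landing inside the tail rather than at position $n$. In the sub-case $\overline{n}<i$ (with $i-\overline{n}$ even) a \emph{second} reflection $s_{\epsilon_{1}-\epsilon_{n-r},\,\underline{n}p}$ then immediately produces a $\prec$-smaller weight, so the chain is very short. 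The longer parity case still requires work, but the $F_0$ verifications there go through precisely because one never leaves the regime where Lemmas~\ref{l4.4}--\ref{l4.6} apply. Your plan would need to be reorganised around reflections at $\underline{n}p$ rather than $(\underline{n}+1)p$.
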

\begin{proof}
Write $\lambda=\omega^a_{-i}=a^{n-i}(a-1)\ldots (a-i)$ and $\overline{n}=t$.

If $\underline{n}=0$, then $n\leq i$ and $2i+1\leq p$ gives $n<p$. Therefore, $\underline{n}>0$, which implies  $n\geq p\geq 2i+1\geq i+3$ and shows that all powers of $a$ appearing below are nonnegative.

Assume \underline{$i-t$ is even}.

Since $\lambda\in F_0$ by Lemma \ref{l4.3}, we have 
\[\lambda\sim (a+1)^2a^{n-i-2}(a-1)\ldots (a-i)=\lambda^{(1)}.\]
If $r=\frac{i+t-2}{2}$, then 
\[d_{2,n-r}(\lambda^{(1)})=1+i-r+n-r-2=-1+i+n-i-t+2=\underline{n}p+1\]
and $s_{\epsilon_2-\epsilon_{n-r},\underline{n}p}\bullet \lambda^{(1)}=\lambda^{(2)}$,  where
\[\lambda^{(2)}=(a+1)a^{n-i-1}(a-1)\ldots (a-i+r+2)(a-i+r+1)^2(a-i+r-1)\ldots (a-i),\]
showing that $\lambda^{(1)}\sim_{ev}\lambda^{(2)}$.

Assume $t<i$. Then $r\leq i-2$, $d_{1,n-r}(\lambda^{(2)})=i-r+n-r-1=i+n-1-i-t+2=\underline{n}p+1$
and $s_{\epsilon_1-\epsilon_{n-r},\underline{n}p}\bullet \lambda^{(2)}=\lambda^{(3)}$, 
where
\[\lambda^{(3)}=a^{n-i}(a-1)\ldots (a-i+r+2)^2(a-i+r+1)(a-i+r-1)\ldots (a-i),\]
showing that $\lambda^{(2)}\sim_{ev}\lambda^{(3)}\precneqq \lambda$.

If $t=i$, then $s=i-1$ and $\lambda^{(2)}=(a+1)a^{n-i}(a-2)\ldots (a-i)$.
For $j=2, \ldots i$, define 
\[\mu^{(j)}=(a+1)a^{n-i}a(a-1)\ldots (a-j+3)(a-j+2)(a-j-1)\ldots (a-i).\]
Since  $\overline{d}_{1,n-i+1}(\lambda^{(2)})=1$ and $\overline{d}_{1,n}(\lambda^{(2)})=\overline{n}+i<p$,
Lemma \ref{l4.5} implies $\lambda^{(2)}\in F_0$ and $\lambda^{(2)}\sim \mu^{(2)}$.
Since $\overline{d}_{1,n-i+1}(\mu^{(j)})=1$ and $\overline{d}_{1,n}(\mu^{(j)})=\overline{n}+i<p$ for $j=2, \ldots, i-1$,
Lemma \ref{l4.5} implies $\mu^{(j)}\in F_0$ and $\mu^{(j)}\sim \mu^{(j+1)}$.
Therefore, 
\[\lambda\sim \mu^{(i)}=(a+1)a^{n-i+2}(a-1)\ldots (a-i+2)\precneqq \lambda.\]

Assume \underline{$i-t$ is odd}.
Since $\lambda\in F_0$ by Lemma \ref{l4.3}, we have 
\[\lambda\sim (a+1)^2a^{n-i-2}(a-1)\ldots (a-i)=\lambda^{(1)}.\]

We have 
\[\overline{d}_1(\lambda^{(1)})=(2-i+t,4-i+t, \ldots,-1,1,3, \ldots, i+t),\]
\[\overline{d}_2(\lambda^{(1)})=(1-i+t,3-i+t, \ldots,-2,0,2, \ldots, i+t-1),\]
where 
$\overline{d}_{2, n-i+1}(\lambda^{(1)})=1-i+t\geq 1-i>-p+1$
and $\overline{d}_{2,n}(\lambda^{(1)})=i+t-1<p$. Therefore there is no $n-i<v$ such that
$d_{2,v}(\lambda^{(1)})\equiv 1 \pmod p$, and using the arguments as in the proof of Proposition \ref{p6.2}, we infer that condition $(*_{2,v})$ is satisfied.

Let $d_{1,v}(\lambda^{(1)})=cp^s+bp^{s+1}$ for $0<c<p$.
Since $\overline{d}_{1,n-i+1}(\lambda^{(1)})=2-i+t>-p+1$,  $\overline{d}_{1,n}(\lambda^{(1)})=i+t<p$,
the inspection of the vector $\overline{d}_1(\lambda^{(1)})$ shows that if $s=0$ and $n-i<v$, then $c\neq 2$. 
The same argument as in the proof of Proposition \ref{p6.2} gives that condition $(*_{1,v})$ is satisfied.

Therefore by Lemma \ref{l4.4}, we infer $\lambda^{(1)}\in F_0$ and 
\[\lambda^{(1)}\sim \lambda^{(2)}=(a+2)^2a^{n-i-2}(a-1)\ldots(a-i).\]

Denote $r=\frac{i+t-1}{2}$. 
Then 
\[d_{2,n-r}(\lambda^{(2)})=2+i-r+n-r-2=i+n-2r=
i+n-i-t+1=\underline{n}p+1\] and $s_{\epsilon_2-\epsilon_{n-r},\underline{n}p}\bullet \lambda^{(2)}=\lambda^{(3)}$, where
\[\lambda^{(3)}=(a+2)(a+1)a^{n-i-2}(a-1)\ldots (a-i+r+1)^2(a-i+r-1)\ldots (a-i),\]
showing that $\lambda^{(2)}\sim_{ev}\lambda^{(3)}$.

We have 
\[\overline{d}_1(\lambda^{(3)})=(3-i+t,5-i+t, \ldots,-6,-4,-2,-1,2,4, \ldots, i+t+1),\]
where 
$\overline{d}_{1, n-i+1}(\lambda^{(3)})=3-i+t\geq 3-i>-p+1$
and $\overline{d}_{1,n}(\lambda^{(3)})=i+t+1\leq p$. Therefore there is no $n-i<v$ such that
$d_{1,v}(\lambda^{(3)})\equiv 1 \pmod p$.

We have 
\[\overline{d}_2(\lambda^{(3)})=(1-i+t,3-i+t, \ldots,-8,-6,-4,-3,0,2, \ldots, i+t-1),\]
where 
$\overline{d}_{2, n-i+1}(\lambda^{(3)})=1-i+t\geq 1-i>-p+1$
and $\overline{d}_{2,n}(\lambda^{(3)})=i+t-1<p$. Therefore there is no $n-i<v$ such that
$d_{2,v}(\lambda^{(3)})\equiv 1 \pmod p$.
By Lemma \ref{l4.6} we infer $\lambda^{(3)}\in F_0$ and
\[\lambda^{(3)}\sim \lambda^{(4)}=(a+2)(a+1)a^{n-i-2}(a-1)\ldots (a-i+r+2)^3(a-i+r-1)\ldots (a-i).\]

Next, we form a sequence of weights 
\[\begin{aligned}\lambda^{(4)}\sim\mu^{(i-r-1)}=&(a+2)(a+1)a^{n-i-2}(a-1)\ldots (a-i+r+2)^3(a-i+r+1)\\
&(a-i+r-2)\ldots (a-i)\end{aligned}\] through 
\[\begin{aligned}\mu^{(i-2)}=&(a+2)(a+1)a^{n-i-2}(a-1)\ldots (a-i+r+2)^3(a-i+r+1)(a-i+r)\\
&\ldots (a-i+2)=\kappa\end{aligned}\]
that are all linked because $\overline{d}_{1,v}(\mu^{(j)})=\overline{d}_{1,v}(\lambda^{(4)})$ for $v\leq n-r$, the parities of the remaining entries in $\overline{d}_1(\mu^{(j)})$ and $\overline{d}_2(\mu^{(j)})$ do not change and stay contained within the interval $[-p+2,p]$, which imply that there is no $n-i<v$ such that
$d_{1,v}(\mu^{(j)})\equiv 1 \pmod p$ and no $n-i<v$ such that $d_{2,v}(\mu^{(j)})\equiv 1 \pmod p$. 
Then Lemma \ref{l4.6} implies that all $\mu^{(j)}\in F_0$.

We have 
\[\overline{d}_1(\kappa)=(3-i+t,5-i+t, \ldots,-6,-4,-3,-2,0,2,4, \ldots, i+t-1),\]
and 
\[\overline{d}_2(\kappa)=(1-i+t,3-i+t, \ldots,-8,-6,-5,-4,-2, 0,2,\ldots, i+t-3),\] 

From $\kappa$, there is a series of linked weights 
\[\kappa^{(j)}=(a+2)(a+1)a^{n-i-2}(a-1)\ldots (a-i+j+2)^3(a-i+j+1)(a-i+j)\ldots (a-i+2)\]
for $j=r, \ldots, i-3$ that belong to $F_0$. 

Indeed, for each weight we have
$\overline{d}_{1,n-i+1}(\kappa^{(j)})>\overline{d}_{2,n-i+1}(\kappa^{(j)})>-p+1$ and
$\overline{d}_{2,n}(\kappa^{(j)})<\overline{d}_{1,n}(\kappa^{(j)})< p$ 
and $\overline{d}_{1,v}(\kappa^{(j)})=\overline{d}_{1,v}(\kappa)$, $\overline{d}_{2,v}(\kappa^{(j)})=\overline{d}_{2,v}(\kappa)$ for $v\geq n-r+1$, while the remaining entries in $\overline{d}(\kappa^{(j)})$ are negative.

Therefore, there is no $n-i<v$ such that $d_{1,v}(\kappa^{(j)})\equiv 1 \pmod p$ or $d_{2,v}(\kappa^{(j)})\equiv 1 \pmod p$, and Lemma \ref{l4.6} implies $\kappa^{(j)}\in F_0$, 

We obtain 
\[\kappa\sim \kappa^{(i-3)}=(a+2)(a+1)a^{n-i-2}(a-1)^3\ldots (a-2)\ldots (a-i+2)\]
and 
\[\kappa^{(i-3)}\sim (a+2)(a+1)a^{n-i}(a-1)\ldots (a-2)\ldots (a-i+2)=\iota.\]

To finish, we define weights 
\[\iota^{(j)}=(a+2)(a+1)^{1+2j}a^{n-i-2j}(a-1)\ldots (a-2)\ldots (a-i+2)\]
for $j=0, \ldots, \frac{p+t-i}{2}=r$. 

Then for each $j$,
\[\overline{d}_{1}(\iota^{(j)})=(2+t-i, 3+t-i,5+t-i, \ldots, i+t-1 ).\] 
Since 
\[\overline{d}_{1,n-i+3}(\iota^{(j)})\geq 5+t-i>-p+1, \quad  
\overline{d}_{1,n}(\iota^{(j)})=i+t-1< p.\]
and $\overline{d}_{1,v}(\iota^{(j)})$ is even for $v\geq n-i+3$, 
we infer that there is no $n-i+2<v$ such that $\overline{d}_{1,v}(\iota^{(j)})\equiv 1 \pmod p$.

We have 
\[\overline{d}_{2+2j}(\iota^{(j)})=(-i+t-2j, -i+t+1-2j,-i+t+3-2j, \ldots, i+t-1-2j ).\] 
and 
\[\overline{d}_k(\iota^{(j)})=(-i+t+2-k, -i+t+3-k,-i+t+5-k, \ldots, i+t+1-k )\] 
for $1<k\leq 2+2j$.

For each $j=0,\ldots,r$ and $1<k\leq 2+2j$ we have 
\[\overline{d}_{k,n-i+3}(\iota^{(j)})=t-i+5-k\geq t-i+3-2j> 3+t-i--p+t-i>-p+1\]
and
\[\overline{d}_{k,n}(\iota^{(j)})=i+t+1-k<p.\]
Since for $v\geq n-i+3$ the number $\overline{d}_{2+2j,v}(\iota^{(j)})$ is even, we derive that $d_{2+2j,v}(\iota^{(j)})\not\equiv 1 \pmod p$ for $v\geq n-i+3$.

By Lemma \ref{l4.3}, in order to show that $\iota^{(j)}\in F_0$, it is enough to verify 
the condition $(*_{k,l})$ for $1\leq k\leq 2+2j$ and $n-i+2<v$.

Let $d_{k,l}(\lambda)=cp^s+bp^{s+1}$ with $0<c<p$.
Assume $1<k<2j+2$. 
If $s=0$ and $2+2j-k\geq c$, then  $d_{k, k+c}(\iota^{(j)})=c$. If $2+2j-k<c$, then 
$d_{k, k+c-1}(\iota^{(j)})=c$. Therefore all conditions $(*_{k,l})$ are satisfied for $1<k<2+2j$ by Lemma \ref{new2}. 

Assume $k=1$ or $k=2j+2$. Using the same arguments as in the proof of Proposition \ref{p6.2}, 
we derive that $d_{1,v}(\iota^{j}))\not\equiv 1 \pmod p$ and 
$d_{2j+2,v}(\iota^{j}))\not\equiv 1 \pmod p$ for $v\geq n-i+3$ implies that conditions $(*_{1,v})$ and $(*_{2j+2,v})$ 
are satisfied.

Therefore by Lemma \ref{l4.6} we infer $\iota^{(j)}\in F_0$ for each $0\leq j\leq r$. 

Since $\underline{n}>0$, we get 
\[\kappa^{(1)}\sim \iota^{(r)}=(a+2)(a+1)^{p+t-i+1}a^{(\underline{n}-1)p}(a-1)\ldots(a-i+2).\]

If $\underline{n}=1$, then $\iota^{(r)}\precneqq \lambda$.

If $\underline{n}>1$, then 
\[\iota^{(r)}\sim\iota^{(r+1)}=(a+2)(a+1)^{p+t-i+3}a^{(\underline{n}-1)p-2}(a-1)\ldots(a-i+2).\]
Since \[d_{p+t-i+4, n-i+3}(\iota^{(r+1)})=n-p-t-1+2=(\underline{n}-1)p+1,\]
we have $s_{\epsilon_{p+t-i+4}-\epsilon_{n-i+3}(\underline{n}-1),p}\bullet \iota^{(r+1)}=\iota^{(r+2)}$,
where
\[\iota^{(r+2)}=(a+2)(a+1)^{p+t-i+2}a^{(\underline{n}-1)p}(a-2)\ldots (a-i+2).\]
We conclude that $\iota^{(r)}\sim_{ev}\iota^{(r+1)}\precneqq \lambda$.
\end{proof}

\section{$\omega^a_{-1}$ and $\omega^a_0$}

\begin{lm}\label{l7.1}
The weights $(a+1)^ta^{n-t}$ for $1\leq t<n$ belong to $F_0$.
\end{lm}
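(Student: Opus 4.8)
The plan is to verify Jantzen's irreducibility criterion (Proposition \ref{p4.1}) for $\lambda=(a+1)^ta^{n-t}$ directly. The auxiliary Lemmas \ref{l4.3}--\ref{l4.8} do not cover the case where both $t$ and $n-t$ are large relative to $p$, so I would argue from scratch. First I record that $\lambda$ has defect zero: the consecutive differences $\lambda_i-\lambda_{i+1}$ are all $0$ except $\lambda_t-\lambda_{t+1}=1$, and since $p>2$ neither $0$ nor $1$ is $\equiv -1\pmod p$. Thus it suffices to check the condition $(*_{u,v})$ for all $1\le u<v\le n$, and then Proposition \ref{p4.1} gives $\lambda\in F_0$.

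The heart of the argument is an elementary description of the function $w\mapsto d_{u,w}(\lambda)$ on $[u,v]$: its increments $d_{u,w+1}(\lambda)-d_{u,w}(\lambda)=\lambda_w-\lambda_{w+1}+1$ equal $1$ for every $w\ne t$ and equal $2$ for $w=t$. Hence this function is strictly increasing, with $d_{u,u}(\lambda)=0$ and, setting $D=d_{u,v}(\lambda)$, the values it attains form the set $\{0,1,\dots,D\}$ with at most one element removed, namely $t-u+1$, which is removed exactly when $u\le t<v$. In particular, for each $m\in\{0,\dots,D\}$ different from this forbidden value there is a unique index $i(m)\in[u,v]$ with $d_{u,i(m)}(\lambda)=m$, the assignment $m\mapsto i(m)$ is increasing, and $i(0)=u$, $i(D)=v$.

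Now write $D=cp^s+bp^{s+1}$ with $0<c<p$; I may assume $b>0$ since otherwise $(*_{u,v})$ is automatic. If none of $0,p^{s+1},2p^{s+1},\dots,bp^{s+1}$ equals the forbidden value $t-u+1$, then taking $i_r:=i(rp^{s+1})$ for $r=0,\dots,b$ and $i_{b+1}:=v$ gives $u=i_0<i_1<\dots<i_b<i_{b+1}=v$ with $d_{i_r,i_{r+1}}(\lambda)=p^{s+1}$ for $r=0,\dots,b-1$, which is the first alternative of $(*_{u,v})$. Otherwise $t-u+1=j_1p^{s+1}$ for some $1\le j_1\le b$; then no $cp^s+jp^{s+1}$ with $0\le j\le b$ can equal $t-u+1$, because $cp^s=(j_1-j)p^{s+1}$ would force $c=(j_1-j)p$, impossible for $0<c<p$. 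So taking $i_0:=u$ and $i_j:=i(cp^s+(j-1)p^{s+1})$ for $j=1,\dots,b+1$ yields the second alternative. The routine bookkeeping is to check that $i(D)=v$ and that every argument fed to $i$ lies in $[0,D]$, both immediate from $b\ge 1$ and $c>0$.

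The one genuine obstacle is arranging this case distinction cleanly around the single missing value $t-u+1$: the point is that if $t-u+1$ obstructs the obvious chain of jumps $p^{s+1}$ anchored at $u$, then it cannot also obstruct the shifted chain of jumps $p^{s+1}$ anchored at $v$ (with the residual jump $cp^s$), and this dichotomy reduces precisely to $c\not\equiv 0\pmod p$. Once that is isolated there is nothing subtle left, and, in contrast to the situations treated in Sections 5 and 6, no inequality among $n$, $t$, $p$ is needed.
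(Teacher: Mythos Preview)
Your proof is correct and takes essentially the same approach as the paper: both verify Jantzen's criterion $(*_{u,v})$ directly by exploiting that the map $w\mapsto d_{u,w}(\lambda)$ has unit increments except for a single jump of $2$ at $w=t$, so at most the value $t-u+1$ is skipped, and then build the required chain of length-$p^{s+1}$ steps either anchored at $u$ or (if that one is obstructed) shifted by $cp^s$. Your presentation is somewhat cleaner than the paper's, which writes out the indices $i_r$ explicitly and leaves the unobstructed case implicit, but the underlying argument is the same.
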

\begin{proof}
Let $\lambda=(a+1)^ta^{n-t}$. Take $1\leq u<v\leq n$ and write $d_{u,v}(\lambda)=cp^s+bp^{s+1}$.
Denote by $\alpha_{u,j}$ the minimal integer such that $d_{u,\alpha_{u,j}}(\lambda)\geq jp^{s+1}$. 
If $d_{u,\alpha_{u,j}}(\lambda)> jp^{s+1}$, then $d_{u,\alpha_{u,j}}(\lambda)= jp^{s+1}+1$, 
$\alpha_{u,j}=t+1$ and $d_{u,t}(\lambda)=t-u=jp^{s+1}-1$.
We have a sequence $u=i_0<i_1<\ldots <i_b<i_{b+1}=v$ such that 
$i_1=u+cp^s, \ldots, i_{j}=u+cp^s+(j-1)p^{s+1}\leq u+jp^{s+1}-1=t$,
$i_{j+1}= u+cp^s+jp^{s+1}-1=t+cp^s\geq t+1$, \ldots, 
$i_{b+1}=u+cp^s+bj^{s+1}-1=v$ 
for which $d_{i_0, i_1}(\lambda)=cp^s$, and $d_{i_r,i_{r+1}}(\lambda)=p^{s+1}$ for each $r=1, \ldots, b$,
showing that $\lambda\in F_0$.
\end{proof}

\begin{lm}\label{l7.2}
In $n$ is even, then $\omega^a_0\sim \omega^{a+1}_0$ and $\omega^a_{-1}\sim \omega^{a+1}_{-1}$.

If $n$ is odd, then $\omega^a_{-1}\sim\omega^{a-1}_0$ and $\omega^{a-1}_0\sim \omega^{a+1}_0$.
\end{lm}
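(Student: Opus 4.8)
The plan is to realize each of the four linkages by an explicit chain of weights in which consecutive entries are related either by a restricted odd--linkage step of Proposition~\ref{main} (adjoining $2\epsilon_j$, or adjoining $\epsilon_i+\epsilon_{i+1}$ when $\lambda_i=\lambda_{i+1}$, whenever the resulting weight is still dominant), or by an affine Weyl reflection $s_{\beta,kp}$, which produces an even linkage and hence a linkage by Lemma~\ref{even linkage}. Every weight appearing as the source of an odd--linkage step must lie in $F_0$; this is checked throughout by the Jantzen criterion of Proposition~\ref{p4.1}, usually in the packaged form of Lemmas~\ref{l4.3}, \ref{l4.7}, or Lemma~\ref{l7.1}.

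Three of the assertions are reached by pure odd--linkage chains. For $n$ even, starting from $\omega^a_0=a^n$ one adjoins in turn $\epsilon_1+\epsilon_2,\ \epsilon_3+\epsilon_4,\ \dots,\ \epsilon_{n-1}+\epsilon_n$; the weight entering the $k$-th step is $(a+1)^{2k-2}a^{n-2k+2}$, which is $a^n$ (one--dimensional, hence in $F_0$) when $k=1$ and lies in $F_0$ by Lemma~\ref{l7.1} for $k\geq 2$, so the chain terminates at $(a+1)^n=\omega^{a+1}_0$. For $n$ odd the same construction with base $a-1$ moves $\omega^{a-1}_0=(a-1)^n$ through $a^2(a-1)^{n-2},a^4(a-1)^{n-4},\dots$ to $a^{n-1}(a-1)=\omega^a_{-1}$, giving $\omega^{a-1}_0\sim\omega^a_{-1}$. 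Then, still for $n$ odd (so $n-1$ is even), from $\omega^a_{-1}=a^{n-1}(a-1)$ one adjoins $\epsilon_1+\epsilon_2,\dots,\epsilon_{n-2}+\epsilon_{n-1}$, reaching $(a+1)^{n-1}(a-1)$, and a concluding $2\epsilon_n$ (legitimate since the last two coordinates are then $a+1$ and $a-1$) yields $(a+1)^n=\omega^{a+1}_0$; composing with the previous chain proves $\omega^{a-1}_0\sim\omega^a_{-1}\sim\omega^{a+1}_0$. The one point requiring care is that the intermediate weights $(a+1)^m a^{n-1-m}(a-1)$ have two descents in the differences $d_{u,\cdot}$ rather than one; one argues as in the proof of Lemma~\ref{l7.1}, the single available unit of slack $cp^s$ absorbing one skipped value, and one inserts a short reflection detour when $p\mid n-m$ forces a second skipped value.

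The remaining statement, $\omega^a_{-1}\sim\omega^{a+1}_{-1}$ for $n$ even, is where affine reflections become essential and is the main obstacle. When $n$ is even the pair--chain from $a^{n-1}(a-1)$ stalls at the staircase $(a+1)^{n-2}a(a-1)$: Proposition~\ref{main} can no longer modify the last coordinate, since there is no pair of equal adjacent coordinates at the bottom and adjoining $2\epsilon_n$ would destroy dominance. The plan is therefore to first enlarge $\omega^a_{-1}$ (by repeated $2\epsilon_1$'s, possibly after adjoining a few pairs) into a weight $\mu$ one of whose differences $d_{u,v}(\mu)$ sits one step beyond a wall $kp$, apply $s_{\epsilon_u-\epsilon_v,kp}$ to raise the trailing coordinate, and then rebuild toward $\omega^{a+1}_{-1}$; both the size of the enlargement and the choice of $u,v$ depend on $n$ modulo $p$, yielding a case analysis parallel to those of the previous two sections. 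The delicate part is that the obvious choice — growing until $d_{1,n}\equiv 1\pmod p$ — produces a weight that itself fails $(*_{1,n})$ and so cannot be reached by Proposition~\ref{main}; the reflection must be arranged so that \emph{every} weight traversed still satisfies the Jantzen criterion, and making this bookkeeping uniform in $p$ is the technical heart of the proof.
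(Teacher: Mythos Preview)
Your overall architecture matches the paper's, and the two easy chains (raising $a^n$ to $(a+1)^n$ by pairs when $n$ is even, and linking $\omega^{a-1}_0$ to $\omega^a_{-1}$ when $n$ is odd) are exactly what the paper does. The difficulties lie in the other two assertions, and here your proposal has one genuine gap and one deferred computation.

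\textbf{The gap.} For $n$ odd you propose to run the pair chain all the way to $(a+1)^{n-1}(a-1)$ and then cap with $2\epsilon_n$. This requires every $(a+1)^m a^{\,n-1-m}(a-1)$ with $m$ even to lie in $F_0$, and your diagnosis that trouble occurs only when $p\mid n-m$ is wrong. The actual obstruction is $m\equiv n+1\pmod p$: for such $m$ and any $u\equiv m\pmod p$ with $u\le m\le n-p+1$, the pair $(*_{u,n})$ fails, because both the value $m-u+1$ (needed by the ``second option'') and the value $n-u+1$ (needed by the ``first option'') are skipped by the two descents. A concrete failure: $p=5$, $n=19$, $m=10$; then $d_{5,19}=16=1+3\cdot 5$ and neither option in $(*_{5,19})$ can be realized, so $(a+1)^{10}a^{8}(a-1)\notin F_0$. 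Your detour, triggered by the wrong congruence, would not fire here. The paper sidesteps this entirely: for $p>n$ it uses the single-descent chain $(a+2)^j a^{\,n-j}$ (so Lemma~\ref{l5.1} applies throughout), while for $p<n$ it first proves $\omega^a_{-1}\sim (a+1)a^{n-1}$ by the same reflection machinery as in the even-$n$ case and then invokes Lemma~\ref{l7.1} to reach $(a+1)^n$.

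\textbf{The deferred part.} For $n$ even and $\omega^a_{-1}\sim\omega^{a+1}_{-1}$ your plan (grow by $2\epsilon_1$'s, reflect across a wall, descend) is the paper's plan. But the substance is precisely the case analysis you postpone. The paper does: for $p\ge n$, grow to $(a+p-n+1)a^{n-2}(a-1)$, reflect by $s_{\epsilon_1-\epsilon_n,p}$ to $(a+p-n)a^{n-1}$, descend by $2\epsilon_1$'s to $(a+1)a^{n-1}$, and finish with Lemma~\ref{l7.1}; for $p<n$ it splits on the parity of $\overline n$, in the odd case growing by pairs $(a+1)^{2j}a^{\,n-2j-1}(a-1)$ (checked in $F_0$ via Lemma~\ref{l4.4}) before reflecting $s_{\epsilon_{2r}-\epsilon_n,\underline n p}$, and in the even case growing by $2\epsilon_1$'s (checked via Lemma~\ref{l4.5}) before reflecting $s_{\epsilon_1-\epsilon_n,(\underline n+1)p}$. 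Your remark that ``growing until $d_{1,n}\equiv 1\pmod p$ produces a weight that fails $(*_{1,n})$'' is a red herring: the weight that fails $F_0$ is the \emph{target} of the final odd step, not its source, so Proposition~\ref{main} still applies to reach it, and one reflects immediately.
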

\begin{proof}
If \underline{$n$ is even},  then 
$\omega^a_0=a^n\sim(a+1)^2a^{n-2}\sim \ldots \sim (a+1)^{2u}a^{n-2u}\sim\ldots \sim (a+1)^n=\omega^{a+1}_0$ 
because $(a+1)^{2u}a^{n-2u}\in F_0$ for $0\leq u<\frac{n}{2}$ by Lemma \ref{l7.1}. 

If $n$ is odd, then $\omega^a_{-1}=a^{n-1}(a-1)\sim a^{n-3}(a-1)^3\sim \ldots \sim a^{n-1-2v}(a-1)^{2v+1}\sim \ldots \sim (a-1)^n=\omega^{a-1}_0$  
because $a^{n-1-2v}(a-1)^{2v+1}\in F_0$ for $0\leq v<\frac{n-1}{2}$ by Lemma \ref{l7.1}.

Assume $n$ is even and $p\geq n$. 
Then
$\omega^a_{-1}=a^{n-1}(a-1)\sim (a+2)a^{n-2}(a-1)\sim \ldots \sim (a+2j)a^{n-2}(a-1)\sim \ldots \sim 
(a+p-n+1)a^{n-2}(a-1)$ because each $(a+2j)a^{n-2}(a-1)\in F_0$ for $0\leq j< \frac{p-n+1}{2}$ by Lemma \ref{l5.1}.
Let $\mu^{(1)}=(a+p-n+1)a^{n-2}(a-1)$. Then $d_{1n}(\mu^{(1)})=p-n+2+n-1=p+1$ and 
$s_{\epsilon_1-\epsilon_{n},p}\bullet \mu^{(1)}=\mu^{(2)}$, where
$\mu^{(2)}=(a+p-n)a^{n-1}$.      
Therefore, $\mu^{(1)}\sim_{ev}\mu^{(2)}$. 
Also, 
$\mu^{(2)}\in F_0$ by Lemma \ref{l5.1}.
Then $(a+p-n)a^{n-1}\sim (a+p-n-2)a^{n-1}\sim \ldots \sim (a+p-n-2j)a^{n-1}\sim \ldots \sim (a+1)a^{n-1}$
because each $(a+p-n-2j)a^{n-1}\in F_0$ by Lemma \ref{l5.1}.
Then  $(a+1)a^{n-1}\sim (a+1)^{n-1}a=\omega^{a+1}_{-1}$ by Lemma \ref{l7.1}, and we conclude
$\omega^a_{-1}\sim \omega^{a+1}_{-1}$.

If \underline{$n$ is odd} and $p>n$,  then $\omega^a_{0}\sim (a+2)a^{n-1}\sim\ldots \sim (a+2)^ja^{n-j}\sim \ldots (a+2)^n=\omega^{a+2}_0$ because each weight $(a+2)^ja^{n-j}\in F_0$ by Lemma \ref{l5.1}.

Assume $p< n$, $\lambda=\omega^a_{-1}$ and $\overline{n}$ is odd. For $j=0, \ldots, r=\frac{\overline{n}+1}{2}$ denote by $\lambda^{(j)}=(a+1)^{2j} a^{n-2j-1}(a-1)$. Then $d_{1,2j}(\lambda^{(j)})=2j \leq 2r=\overline{n}+1<p$ and 
$d_{2j,n-1}(\lambda^{(j)})=n-1-2j+2j=n-1\geq p$ imply that for every $1\leq k\leq 2j$ either $s>0$ or there is an index $2j<t\leq n-1$ such that 
$d_{k,t}=c$ which implies the condition $(*_{k,v})$ is satisfied for every $1\leq k\leq 2j$ and $k<v$. Thus by Lemma \ref{l4.4}, we have 
$\lambda^{(j)}\in F_0$.
Since $d_{2r,n}(\lambda^{(r)})=n-2r+2=n-\overline{n}-1+2=\underline{n}p+1$, we have 
$s_{\epsilon_{2r}-\epsilon_n,\underline{n}p}\bullet\lambda^{(r)}=\lambda^{(r+1)}$, 
where $\lambda^{(r+1)}=(a+1)^{2r-1} a^{n-2r+1}$. 
For $j=0, \ldots, r-1$ define $\kappa^{(j)}=(a+1)^{2j+1}a^{n-2j-1}$. By Lemma \ref{l7.1}, each $\lambda^{(r+1)}\in F_0$. Therefore $\omega^a_{-1}\sim \lambda^{(r+1)}=\kappa^{(r-1)}\sim \kappa^{(0)}=(a+1)a^{n-1}$.

Assume $p< n$, $\lambda=\omega^a_{-1}$ and $\overline{n}$ is even.
Define 
\[\lambda^{(j)}=(a+2j)a^{n-2}(a-1)\]
 for $j=1, \ldots, \frac{p+1-\overline{n}}{2}=r$. 
Then  for $j<r$ there is 
$\overline{d}_{1,n-1}(\lambda^{(j)})=-2+\overline{n}+2j\geq 0$ and 
$\overline{d}_{1,n}(\lambda^{(j)})=\overline{n}+2j\leq \overline{n}+p-1-\overline{n}<p$. Therefore by Lemma \ref{l4.5}, we have  $\lambda^{(j)}\in F_0$ for $1\leq j<r$.

Thus $\lambda\sim \lambda^{(r)}$ and 
$d_{1n}(\lambda^{(r)})=(\underline{n}+1)p+1$, which implies 
$s_{\epsilon_1-\epsilon_{n},(\underline{n}+1)p}\bullet \lambda^{(r)}=\lambda^{(r+1)}$, 
where
\[\lambda^{(r+1)}=(a+2r-1)a^{n-1},\]
hence $\lambda^{(r)}\sim_{ev} \lambda^{(r+1)}$.
Define \[\mu^{(t)}=(a-1+2t)a^{n-1}\] for $t=1, \ldots, r$. Then all $\mu^{(t)}\in F_0$. Since $\mu^{(r)}=\lambda^{(r+1)}$, 
$\lambda\sim\lambda^{(r+1)}\sim\mu^{(1)}=(a+1)a^{n-1}$.

Using Lemma \ref{l7.1} we obtain 
$(a+1)a^{n-1}\sim (a+1)^{n-1}a=\omega^{a+1}_{-1}$ if $n$ is even, and
$(a+1)a^{n-1}\sim (a+1)^n=\omega^{a+1}_0$ if $n$ is odd.
If $n$ is odd,  we combine $\omega^{a-1}_0\sim \omega^a_{-1}\sim \omega^{a+1}_0$. 
\end{proof}

Since $|\alpha|=0$ for every even root $\alpha$, and $|\alpha|=\pm 2$ for every odd root $\alpha$ of $\mathcal{P} (n)$, the category $\mathcal{F}$ of finite-dimensional supermodules over ${\bf P}(n)$ splits as $\mathcal{F}=\mathcal{F}_0\oplus \mathcal{F}_1$, where $\mathcal{F}_0$ (and $\mathcal{F}_1$, respectively) consists of all supermodules $M$ such that if the weightspace $M_{\lambda}\neq 0$, then $|\lambda|$ is even (odd, respectively).

Additionally, if $M\in \mathcal{F}$ is indecomposable, $0\neq v_{\lambda}\in M_{\lambda}$ and $0\neq v_{\mu}\in M_{\mu}$, then $|v_{\lambda}|-|v_{\mu}|\equiv \frac{|\lambda|-|\mu|}{2} \pmod 2$.
Therefore, if $M\in \mathcal{F}_0$, then the parities of $v_{\lambda}-\frac{|\lambda|}{2}$ and $|v_{\mu}|-\frac{|\mu|}{2}$ are the same. Let $\mathcal{F}_{00}$ and $\mathcal{F}_{01}$ consists of those supermodules in $\mathcal{F}_0$ 
for which the above expression $|v_{\lambda}|-\frac{|\lambda|}{2}$ are even or odd, respectively. 
Then $\mathcal{F}_0=\mathcal{F}_{00}\oplus \mathcal{F}_{01}$.  
Analogously, if $M\in \mathcal{F}_1$, then the parities of $v_{\lambda}-\frac{|\lambda|-1}{2}$ and $|v_{\mu}|-\frac{|\mu|-1}{2}$ are the same. Let $\mathcal{F}_{10}$ and $\mathcal{F}_{11}$ consists of those supermodules in $\mathcal{F}_1$ 
for which the above expression $|v_{\lambda}|-\frac{|\lambda|-1}{2}$ are even or odd, respectively. 
Then $\mathcal{F}_1=\mathcal{F}_{10}\oplus \mathcal{F}_{11}$.  

We conclude that $\mathcal{F}$ splits as $\mathcal{F}=\mathcal{F}_{00}\oplus \mathcal{F}_{01}\oplus\mathcal{F}_{10}\oplus \mathcal{F}_{11}$.

\begin{theorem} \label{t7.3}
${\bf P} (n)$ has four blocks for any $p>2$. They are represented by $L^{\epsilon}(\omega_{-1})$ and
$L^{\epsilon}(\omega_0)$, where $\omega_0=\omega^{0}_0$, $\omega_{-1}=\omega^{0}_{-1}$
and $\epsilon\in\{0,1\}$.
\end{theorem}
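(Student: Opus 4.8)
The plan is to combine the weight reductions of Sections~3, 5 and 6 with the explicit linkages of Lemma~\ref{l7.2}, and then to pass from linkage of weights to blocks of $\mathcal{F}$ using the decomposition $\mathcal{F}=\mathcal{F}_{00}\oplus\mathcal{F}_{01}\oplus\mathcal{F}_{10}\oplus\mathcal{F}_{11}$ established just above.

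First I would show that every dominant weight $\lambda$ is linked to $\omega^a_0$ or $\omega^a_{-1}$ for some integer $a$. Let $\sigma$ be a dominant weight linked to $\lambda$ with $m(\sigma)$ minimal, which exists since the lexicographic order on $\mathbb{Z}_{\ge 0}^{n-1}$ is a well-order. If $p\le 2n-1$, then Propositions~\ref{p3.3} and~\ref{p3.4} force $\sigma=\omega^a_{-i}$ with $0\le i\le\frac{p-1}{2}$; if $p>2n-1$ then $p\ge 2n+1$, so Proposition~\ref{p3.3} gives $\sigma=\omega^a_{-i}$ with $0\le i\le n\le\frac{p-1}{2}$. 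In both cases $i\ge 2$ is impossible by minimality: for $p\ge n$ this is Proposition~\ref{p5.5}, and for $p\le n$ it follows from Propositions~\ref{p6.1},~\ref{p6.2},~\ref{p6.3}, whose hypotheses $\overline n\le i$, $i+1\le\overline n\le p-i+1$ and $\overline n\ge p-i+2$ cover all residues because $i<\frac p2$. So $i\in\{0,1\}$, i.e. $\lambda\sim\omega^a_0$ or $\lambda\sim\omega^a_{-1}$. Next, since $|\alpha|=0$ for even roots and $|\alpha|=\pm2$ for odd roots, $|\mu|\bmod 2$ is constant along a linkage class; hence the class of $\omega_0$ lies in $\mathcal{F}_0$ and that of $\omega_{-1}$ in $\mathcal{F}_1$. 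Conversely, by the previous sentence every $\mu$ with $|\mu|$ even is linked to some $\omega^a_0$ resp. $\omega^a_{-1}$ having even degree, and Lemma~\ref{l7.2} links all such weights together into the single class of $\omega_0$ (for $n$ odd the chains $\omega^a_{-1}\sim\omega^{a-1}_0\sim\omega^{a+1}_0$ are used); likewise the weights of odd degree form the single class of $\omega_{-1}$. Thus $\mathcal{F}_0$ and $\mathcal{F}_1$ each consist of exactly one linkage class of weights.

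Then I would upgrade this to blocks. Recall that $\Pi$ carries an indecomposable to an indecomposable whose composition factors are the $\Pi$-shifts of the original ones and interchanges $\mathcal{F}_{i0}$ with $\mathcal{F}_{i1}$; that an indecomposable lies in one $\mathcal{F}_{ij}$ and hence so do all its composition factors; and that a block is contained in a single $\mathcal{F}_{ij}$. The key claim is: if $\mu\sim\mu'$ and $L^{\epsilon}(\mu)\in\mathcal{F}_{ij}$, then the block of $L^{\epsilon}(\mu)$ contains $L^{\epsilon'}(\mu')$, where $\epsilon'$ is the unique parity with $L^{\epsilon'}(\mu')\in\mathcal{F}_{ij}$ (unique because $L^{+}(\mu')$ and $L^{-}(\mu')$ lie in different $\mathcal{F}_{ij}$). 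To see this, fix a chain $\mu=\kappa_0,\dots,\kappa_s=\mu'$ witnessing $\mu\sim\mu'$; at the $t$-th step there is an indecomposable with composition factors of highest weights $\kappa_t$ and $\kappa_{t+1}$, and after applying $\Pi$ if needed we may assume its factor at $\kappa_t$ has the parity carried over from the previous step, so by induction one obtains a simple of highest weight $\kappa_t$ lying in the block of $L^{\epsilon}(\mu)$ and in $\mathcal{F}_{ij}$ for every $t$; taking $t=s$ gives $L^{\epsilon'}(\mu')$ in that block. Now if $L^{\epsilon_1}(\lambda_1)$ and $L^{\epsilon_2}(\lambda_2)$ both lie in $\mathcal{F}_{ij}$, then $\lambda_1\sim\lambda_2$ by the first part, so the claim puts $L^{\epsilon_2}(\lambda_2)$ in the block of $L^{\epsilon_1}(\lambda_1)$; hence $\mathcal{F}_{ij}$ is a single block. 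Since $L(\omega_0)$ has highest weight vector of parity $0$ with $0-\tfrac{|\omega_0|}{2}=0$ it lies in $\mathcal{F}_{00}$, and then $\Pi L(\omega_0)\in\mathcal{F}_{01}$; likewise $0-\tfrac{|\omega_{-1}|-1}{2}=1$ gives $L(\omega_{-1})\in\mathcal{F}_{11}$ and $\Pi L(\omega_{-1})\in\mathcal{F}_{10}$. Therefore the four nonempty, pairwise distinct blocks are exactly $\mathcal{F}_{00},\mathcal{F}_{01},\mathcal{F}_{10},\mathcal{F}_{11}$, represented by $L^{\epsilon}(\omega_0)$ and $L^{\epsilon}(\omega_{-1})$, $\epsilon\in\{0,1\}$.

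The main obstacle is this last step: the relation $\sim$ on weights discards parities while blocks do not, so one must follow the parity shift $\Pi$ carefully along a linkage chain and keep in mind that blocks come in $\Pi$-pairs, one in $\mathcal{F}_{i0}$ and one in $\mathcal{F}_{i1}$. Everything else — the exhaustion of the cases for $\overline n$, and the verification that the hypotheses of Propositions~\ref{p3.4},~\ref{p5.5},~\ref{p6.1}--\ref{p6.3} are met — is routine bookkeeping.
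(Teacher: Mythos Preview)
Your proposal is correct and follows essentially the same approach as the paper: combine Propositions~\ref{p3.3}, \ref{p3.4}, \ref{p5.5}, \ref{p6.1}, \ref{p6.2}, \ref{p6.3} and Lemma~\ref{l7.2} to reduce every weight to $\omega_0$ or $\omega_{-1}$, and then use the splitting $\mathcal{F}=\mathcal{F}_{00}\oplus\mathcal{F}_{01}\oplus\mathcal{F}_{10}\oplus\mathcal{F}_{11}$ to separate the four representatives $L^{\epsilon}(\omega_0)$, $L^{\epsilon}(\omega_{-1})$. Your careful tracking of parities along a linkage chain (applying $\Pi$ when needed so as to stay in a fixed $\mathcal{F}_{ij}$) makes explicit what the paper leaves implicit, but the argument is the same.
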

\begin{proof}
From Propositions \ref{p3.3}, \ref{p3.4}, \ref{p5.5}, \ref{p6.1}, \ref{p6.2}, \ref{p6.3} and Lemma \ref{l7.2}
we derive that each $\lambda$ is linked to $L^{\epsilon}(\omega_0)$ or $L^{\epsilon}(\omega_1)$, where $\epsilon\in\{0,1\}$. Since $L^0(\omega_0)\in \mathcal{F}_{00}$, $L^1(\omega_0)\in \mathcal{F}_{01}$, 
$L^0(\omega_1)\in \mathcal{F}_{11}$ and $L^1(\omega_1)\in \mathcal{F}_{10}$ , they belong to different blocks.
\end{proof}

\end{document}